\documentclass[11pt,a4paper]{amsart}
\usepackage[foot]{amsaddr}

\usepackage{geometry}
\geometry{hmargin=2.7cm,vmargin=2.7cm}
\usepackage{amsmath,amssymb,amsthm}
\usepackage{graphicx}
\usepackage{hyperref}
\usepackage{dsfont}

\newtheorem{theorem}{Theorem}[section]
\newtheorem{lemma}[theorem]{Lemma}
\newtheorem{proposition}[theorem]{Proposition}

\newtheorem{assumption}[theorem]{Assumption}
\newtheorem{question}[theorem]{Question}
\theoremstyle{remark}
\newtheorem{remark}[theorem]{Remark}

\numberwithin{equation}{section}

\newcommand{\R}{\mathbb{R}}
\newcommand{\Z}{\mathbb{Z}}
\def\P{\mathbb{P}}
\newcommand{\E}{\mathbb{E}}
\newcommand{\id}{\mathds{1}}

\newcommand{\cross}{\text{\textup{Cross}}}

\begin{document}

\title[Critical level and strong invariance principle for shot noise fields]{Asymptotics for the critical level\\and a strong invariance principle\\for high intensity shot noise fields}
\author{Raphael Lachieze-Rey$^1$}
\address{$^1$Universit\'{e} Paris Descartes / Universit\'e de Paris}
\email{raphael.lachieze-rey@parisdescartes.fr}
\author{Stephen Muirhead$^2$}
\email{smui@unimelb.edu.au}
\address{$^2$School of Mathematics and Statistics, University of Melbourne}
\begin{abstract}
We study fine properties of the convergence of a high intensity shot noise field towards the Gaussian field with the same covariance structure. In particular we (i) establish a strong invariance principle, i.e.\ a quantitative coupling between a high intensity shot noise field and the Gaussian limit such that they are uniformly close on large domains with high probability, and (ii) use this to derive an asymptotic expansion for the critical level above which the excursion sets of the shot noise field percolate.
\end{abstract}
\thanks{}
\date{\today}

\maketitle

\vspace{-0.3cm}
{\bf Keywords:} Shot noise fields, Gaussian fields, percolation, strong invariance principle

{\bf MSC subject classifications:}  60G60, 60K35

\bigskip

\section{Introduction}

Let $g(x) \in L^1(\R^d)$ be a continuous kernel and let $\mathcal{P}_\lambda$ denote a Poisson point process on $\R^d$ with intensity $\lambda$ with respect to the Lebesgue measure. The \textit{shot noise field with kernel} $g$ is the stationary random field
\[ F_\lambda(x) = \sum_{i \in \mathcal{P}_\lambda} g(x-i) , \]
well-defined almost surely by the integrability of $g$.

\smallskip
We are interested in the global connectivity of the excursion sets
\[ \{ F_\lambda \le \ell \} := \{x \in \R^d : F_\lambda(x) \le \ell  \}  \]
and in particular the range of levels for which the excursion sets have unbounded connected components. By monotonicity there exists a critical level $\ell_c = \ell_c(F_\lambda) \in  [-\infty, \infty]$ such that
\begin{equation}
\label{e:cl}
  \P[ \{ F_\lambda \le \ell \} \text{ contains an unbounded component} ]  =  \begin{cases}  0 & \text{if } \ell < \ell_c,  \\  > 0  & \text{if } \ell > \ell_c, \end{cases}
\end{equation}
 and under mild conditions on $g$ the critical level $\ell_c$ is known to be finite \cite{ms83b, bm17}. However very little is known about the value of $\ell_c$ or how it depends on $\lambda$.
 
 \smallskip
 In this paper we study the asymptotic expansion of $\ell_c(F_\lambda)$ in the `high intensity' limit, i.e.\ as $\lambda \to \infty$. In this limit $F_\lambda$ approximates, after suitable rescaling and centering, the stationary Gaussian field $f$ with the same covariance structure (see Section \ref{s:gaus}). Restricting our attention to the planar case $d=2$, since it is known that $\ell_{c}(f)=0$ for reasons of symmetry and planar duality, one naturally expects that the critical level of the rescaled shot noise field is $\ell_c \approx \ell_{c}(f) = 0$, and our main result gives a quantitative version of this approximation (see Theorem \ref{t:main}).
 
\smallskip Towards this, our second main result is a quantitative coupling between high intensity shot noise fields and the limiting Gaussian field which is strong enough to control the error between their critical levels (see Theorem~\ref{t:sip}). The coupling is valid in all dimensions $d \ge 1$, and the sup norm of the error is presumably optimal (up to logarithmic factors) in dimensions $d=1,2$, in that it corresponds to the rate of convergence of marginals (up to logarithmic factors), with exponential tail decay.
 
 \smallskip This result can be considered a \textit{strong invariance principle} for shot noise fields, which extends existing convergence results available in the literature (see the seminal papers by \cite{Papou,hs85,Lane} about convergence of marginals and functional central limit theorems, and \cite{BieDes12,KluMik} for some generalisations), and we believe it to be of independent interest. First, the proximity between Gaussian fields and high intensity shot noise fields is important in many applied fields (see, e.g., \cite{KluMik} in actuarial sciences, \cite{BieDesEuler,GalGoussMor} in image analysis and texture synthesis, \cite{HeinrichStereo} in stereology, \cite{HaljYan} in the study of wireless networks, and further references therein) and the strong invariance principle gives a means to control this approximation. Second, this result could, in principle, be used to estimate other geometric quantities of shot noise field excursions; see, e.g., the many recent works in probability theory and image analysis on this topic \cite{BieDes12,BieDes12b,BieDesEuler}. Finally, since high intensity shot noise fields are also used as a way to simulate Gaussian fields with the same mean and covariance, this result gives uniform control over the error in the simulation.

\subsection{Asymptotic expansion of the critical level}
As mentioned above, in this paper we consider the asymptotic expansion of $\ell_c(F_\lambda)$ as $\lambda \to \infty$. Most of our attention will be on the planar case $d=2$, although some of the intermediate results hold in all dimensions.

\smallskip To state our main result we introduce some assumptions on the kernel:

\begin{assumption}
\label{a:1}
Suppose $g$ satisfies the following:
\begin{itemize}
\item (Symmetry) $g$ is isotropic;
\item (Smoothness) $g \in C^4(\R^d)$;
\item (Decay) There exist $\beta > d$ such that, for every multi-index $\alpha$ such that $|\alpha| \le 4$,
\[ {  |\partial^\alpha g(x)| \le (1 + |x|)^{-\beta}  \ , \quad x \in \R^d. } \]
\end{itemize}
\end{assumption}

Among other things, the latter two conditions ensure that $F_\lambda$ is almost surely $C^3$-smooth. We use the assumption of isotropy mainly for simplicity, and because it gives us access to results in the literature, although we believe isotropy could be relaxed to symmetry under negation and permutation of coordinate axes with minor adjustments to the arguments.

\smallskip
For technical reasons we shall further assume:

\begin{assumption}
\label{a:3}
The random vector $( F_\lambda(0), \nabla F_\lambda(0) )$ has a bounded density.
\end{assumption}

\noindent We give examples of kernels satisfying Assumption \ref{a:3} in Remark \ref{r:ex} below. A necessary condition for Assumption \ref{a:3} is that $g$ has unbounded support.

\smallskip We can now state our main result, {  which concerns the rescaled and centred shot noise field 
\begin{align*}
f_{\lambda }(x)=\frac{ F_{\lambda }(x)-\mathbb{E}[F_{\lambda }(0)]}{\lambda ^{1/2}} .
\end{align*}
As discussed in detail in the next section, the rescaled field $f_\lambda$ converges as $\lambda \to \infty$ to a limiting stationary Gaussian field~$f$. By the Campbell-Mecke formula $\mathbb{E}[F_\lambda(0)] = \lambda \int_{\R^d} g(x) dx$.}

\smallskip
For functions $j,k : \R^+ \to \R$ we write $j(x) = o(k(x))$ if $|j(x)|/|k(x)| \to 0$ as $x \to \infty$, and $j(x) = O(k(x))$ if there exists a $c > 0$ such that $\limsup_{x \to \infty} |j(x)|/ | k(x)| \le c$. For $h \in L^1(\R^d)$ we write $\int h$ to abbreviate $\int_{\R^d} h(x) dx$. 

\begin{theorem}[Asymptotic expansion of the critical level]
\label{t:main}
Let $d=2$ and assume $g$ satisfies Assumption \ref{a:1} and \ref{a:3}. Then, as $\lambda \to \infty$,
\begin{equation}
\label{e:main1}
\ell_c(f_\lambda) \to 0.
\end{equation}
If moreover $g \ge 0$ then, as $\lambda \to \infty$,
\begin{equation}
\label{e:main2}
\ell_c(f_\lambda) =  O \big( \lambda ^{-1/2}(\log \lambda)^{3/2} \big) .
\end{equation}
\end{theorem}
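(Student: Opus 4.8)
The plan is to transfer percolation and non-percolation of the excursion sets between $f_\lambda$ and its Gaussian limit $f$ through the coupling of Theorem~\ref{t:sip}, using that $\ell_c(f)=0$ together with quantitative crossing estimates for $f$ near criticality. Throughout write $\Lambda_R=[-R,R]^2$. The deterministic mechanism is this: on any set $D$ with $\|f_\lambda-f\|_{\infty,D}\le\eps$ one has
\[ \{f\le \ell-\eps\}\cap D\ \subseteq\ \{f_\lambda\le\ell\}\cap D\ \subseteq\ \{f\le\ell+\eps\}\cap D, \]
and, since (by Assumption~\ref{a:3}) the complement of a lower excursion set agrees off an a.s.\ null set with an upper excursion set, also $\{f\ge\ell+\eps\}\cap D\subseteq\{f_\lambda\ge\ell\}\cap D$. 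Hence a crossing of a $3R\times R$ rectangle, or a circuit in an annulus $\Lambda_{3R}\setminus\Lambda_R$, for a level set of $f$ is, on the event $\{\|f_\lambda-f\|_{\infty,D}\le\eps\}$ with $D$ containing the rectangle or annulus, also one for the corresponding level set of $f_\lambda$ after an $\eps$-shift of the level. On the percolation side we use the planar finite-size criterion: when $g\ge 0$ the field $f_\lambda$ is an increasing functional of $\mathcal{P}_\lambda$, so it satisfies the FKG inequality, and there is a universal $p_0<1$ with (i) if for some $R$ the probability that $\{f_\lambda\le\ell\}$ crosses a $3R\times R$ rectangle in the long direction exceeds $p_0$, then a.s.\ $\{f_\lambda\le\ell\}$ has an unbounded component; (ii) if the probability that $\{f_\lambda\ge\ell\}$ has a circuit in $\Lambda_{3R}\setminus\Lambda_R$ exceeds $p_0$, then a.s.\ $\{f_\lambda\le\ell\}$ has none. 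Both follow from block renormalisation using positive associations.

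The Gaussian input is threefold: $\ell_c(f)=0$; $f$ satisfies RSW; and a \emph{quantitative} form of sharpness, namely a polynomial bound $L(\ell)\le|\ell|^{-\alpha}$ (some $\alpha>0$) on the correlation length of $f$, so that for $\ell>0$ and $R\ge|\ell|^{-\alpha}$ the probability that $\{f\le\ell\}$ crosses a $3R\times R$ rectangle in the long direction exceeds $\tfrac12(1+p_0)$; equivalently, as $f\overset{d}{=}-f$, the probability that $\{f\ge-\ell\}$ has a circuit in $\Lambda_{3R}\setminus\Lambda_R$ exceeds $\tfrac12(1+p_0)$.

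Now combine. Set $\eps_\lambda=c\lambda^{-1/2}(\log\lambda)^{3/2}$; for $c$ large, Theorem~\ref{t:sip} gives $\P[\|f_\lambda-f\|_{\infty,\Lambda_R}>\eps_\lambda]=o(1)$ uniformly in $R\le\lambda^{\alpha/2}$ — here the exponent $3/2>1$ is exactly what makes the (exponential-tailed) coupling error on a box of polynomial size occur with probability $o(1)$. Take $R_\lambda=\lceil\eps_\lambda^{-\alpha}\rceil\le\lambda^{\alpha/2}$. For \eqref{e:main2}: apply the inclusions on $D=\Lambda_{3R_\lambda}$ with $\ell=2\eps_\lambda$, so a long crossing of $\{f\le\eps_\lambda\}$ forces one of $\{f_\lambda\le2\eps_\lambda\}$; the former has probability $>\tfrac12(1+p_0)$ by the quantitative sharpness (as $R_\lambda\ge\eps_\lambda^{-\alpha}\ge L(\eps_\lambda)$), and subtracting the $o(1)$ coupling-failure probability, the latter has probability $>p_0$ for all large $\lambda$; criterion (i) then yields $\ell_c(f_\lambda)\le2\eps_\lambda$. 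Dually, apply the upper-excursion inclusion on $D=\Lambda_{3R_\lambda}$ with $\ell=-2\eps_\lambda$, so a circuit of $\{f\ge-\eps_\lambda\}$ in $\Lambda_{3R_\lambda}\setminus\Lambda_{R_\lambda}$ forces one of $\{f_\lambda\ge-2\eps_\lambda\}$; this has probability $>\tfrac12(1+p_0)$, hence $>p_0$ for $f_\lambda$ after subtracting the coupling error, so criterion (ii) gives that $\{f_\lambda\le-2\eps_\lambda\}$ does not percolate, i.e.\ $\ell_c(f_\lambda)\ge-2\eps_\lambda$. Together these give $|\ell_c(f_\lambda)|\le2\eps_\lambda=O(\lambda^{-1/2}(\log\lambda)^{3/2})$. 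For \eqref{e:main1} one runs the same scheme with an arbitrary fixed $\ell\ne0$ in place of $\pm2\eps_\lambda$, now needing only the \emph{qualitative} statement that crossing/circuit probabilities for $f$ at a fixed nonzero level tend to $1$; and when $g$ is not nonnegative (so FKG fails) one first replaces $f_\lambda$ by a finite-range approximation, splitting $g=g\,\id_{\{|x|\le T\}}+g\,\id_{\{|x|>T\}}$ with the remainder uniformly small as $T\to\infty$ by Assumption~\ref{a:1}, to recover a finite-size criterion.

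The main obstacle is the quantitative sharpness for $f$: controlling the joint rate at which the box-crossing probabilities of $\{f\le\ell\}$ tend to $1$ as $R\to\infty$ and $\ell\downarrow0$, i.e.\ the polynomial bound $L(\ell)\le|\ell|^{-\alpha}$. The deterministic sandwiching, the FKG finite-size criteria, and the insertion of Theorem~\ref{t:sip} are routine by comparison; the remaining care is bookkeeping — the multiple of $\eps_\lambda$ that emerges, and the precise form of the coupling error on polynomial-size boxes.
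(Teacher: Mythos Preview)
Your high-level strategy coincides with the paper's: couple $f_\lambda$ to $f$ via Theorem~\ref{t:sip}, transfer crossing/circuit estimates for $f$ near $\ell=0$ to $f_\lambda$ on a box of polynomial size in $\lambda$, and then invoke a finite-size criterion for $f_\lambda$. You also correctly identify the main analytic obstacle, namely a polynomial bound on the near-critical window of $f$, which is exactly Theorem~\ref{t:quantzero} and is where the hypothesis $g\ge 0$ enters in the paper as well.

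The genuine gap is in your finite-size criteria (i)--(ii). You claim they follow from ``block renormalisation using positive associations'', but block renormalisation is fundamentally a decoupling argument, not an FKG argument: passing from a single-scale crossing or circuit estimate to control at \emph{all} larger scales requires an inequality of the shape $a_{n+1}\le C\,a_n^2+e(R_n)$, which comes from (approximate) independence of well-separated regions. FKG gives the wrong direction of inequality for this bootstrap, and in fact positively associated, ergodic $\{0,1\}$-fields with marginals arbitrarily close to $1$ need not percolate, so neither (i) nor (ii) follows from positive associations alone. The paper supplies the missing ingredient by proving a \emph{uniform polynomial sprinkled decoupling} property for the rescaled fields $(\lambda^{-\eta}f_\lambda)_{\lambda\ge 1}$, $\eta>1/2$ (Proposition~\ref{p:sd}), obtained by comparing $f_\lambda$ with its $R$-range truncation $f_\lambda^R$, and then runs precisely this bootstrap (Proposition~\ref{p:fsc}) inside the general continuity statement Proposition~\ref{p:cont}. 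This route works regardless of the sign of $g$, so FKG is never invoked on the shot-noise side and \eqref{e:main1} is obtained without any positivity assumption. Your own suggestion for the non-positive case---pass to a compactly supported truncation of $g$ to recover a finite-size criterion---is in fact the right mechanism, and it is needed already when $g\ge 0$ to make (i)--(ii) valid; once you implement it you are essentially reproducing the paper's sprinkled-decoupling argument.
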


\begin{remark}
{ By rescaling, \eqref{e:main1} and \eqref{e:main2} are equivalent to the asymptotic expansions $ \ell_c(F_\lambda) = \lambda \int g + o( \lambda^{1/2} ) $ and $ \ell_c(F_\lambda) = \lambda \int g + O( (\log \lambda)^{3/2} )$ respectively.}
\end{remark} 

\begin{remark}[Examples]
\label{r:ex}
For examples of shot noise fields $F_\lambda$ to which Theorem~\ref{t:main} applies, one can take the kernels
\[ g(x) =  (1 + |x|^2)^{-\beta/2}  \, , \  \beta > 2 ,  \quad \text{or} \quad  g(x) = \exp(-(1+ |x|^2)^{-\gamma/2} ) \, , \ \gamma \in (0, 1) . \]
It is clear that these kernels satisfy Assumption \ref{a:1} {  (perhaps after multiplication by a constant)}, and see \cite[Appendix A.2]{lm19} for a proof that the corresponding shot noise fields satisfy Assumption \ref{a:3}.
\end{remark}

Since $f_{\lambda }$ converges to the limiting Gaussian field $f$, and $\ell_{c}(f)=0$, it is natural to expect that {  $\ell_c(f_\lambda) \to 0$}. The main content of Theorem \ref{t:main} is to control the rate of decay of $\ell_c(f_\lambda)$. Nevertheless, we do not expect Theorem \ref{t:main} to be optimal. In fact we believe that {  $\ell_c(f_\lambda)$} is at most ${ O(\lambda ^{-1/2}\log \lambda)}$, and perhaps even smaller, even if we do not assume $g \ge 0$ (see Remark \ref{r:kmt}).

\begin{question}
{ What is the order of $\ell_c(f_\lambda)$ as $\lambda \to \infty$?} How does it depend on the kernel $g$?
\end{question}

\begin{remark}[Possible extensions]
A more general model of shot noise is the random field
\[ F_\lambda(x) = \sum_{i \in \mathcal{P}_\lambda} Y_i g(x-i)  \]
where $\{Y_i\}_{i \in \mathcal{P}_\lambda}$ are independent copies of a random variable $Y$ with finite mean; this reduces to the model we consider if $Y_i \equiv 1$. Under sufficient moment conditions on $Y$ we expect that our proof can be adapted to show that \eqref{e:main1}--\eqref{e:main2} hold. In the case that $Y$ is symmetric then it is expected, and in some cases known (see \cite{lm19}), that { $\ell_c(f_\lambda) = 0$} for all $\lambda > 0$ for reasons of planar duality.
\end{remark}

\subsection{Strong invariance principle for shot noise fields}
\label{s:gaus}

As briefly described above, our proof of Theorem \ref{t:main} relies on the fact that $F_\lambda$ converges, after suitable centring and rescaling, to a stationary Gaussian field $f$.  Indeed a key intermediate step in the proof is to establish a `strong invariance principle' for this convergence, which improves on the qualitative convergence available in the literature.

\smallskip
To state the result we return to the setting of shot noise fields in arbitrary dimension $d \ge 1$ and assume that $g \in L^1(\R^d) \cap L^2(\R^d)$. As above we consider the centred and rescaled shot noise field
\[  f_\lambda(x) = \frac{F_\lambda(x) - \mathbb{E}[ F_\lambda(0)] }{\lambda^{1/2}} . \]
An application of the Campbell-Mecke formula shows that $\E[ F_\lambda(0)] = \lambda \int g$, and also that
\[   \textrm{Cov}[f_\lambda(0), f_\lambda(x) ]   =  \int g(-y) g(x-y) \, dy, \]
which is independent of $\lambda$ and finite by our assumption that $\textrm{Var}[f_\lambda(0)]  = \|g\|_{L^2} < \infty$. Hence it is natural to compare $f_\lambda$ to the stationary centred Gaussian field $f$ with covariance kernel
\[ K(x) = \mathbb{E}[f(0) f(x)] = \int g(-y) g(x-y) \, dy  . \]
Indeed, under a certain subset  of the conditions in Assumption \ref{a:1}, it is known that $f_\lambda$ satisfies a functional central limit theorem (`invariance principle') in the high intensity limit.

\begin{assumption}
\label{a:2}
Suppose $g$ satisfies the following:
\begin{itemize}
\item (Smoothness) $g \in C^2(\R^d)$
\item (Decay) { There exist $\beta > d$ such that, for every multi-index $\alpha$ such that $|\alpha| \le 2$,
\[  |\partial^\alpha g(x)| \le (1+ |x|)^{-\beta} \ , \quad x \in \R^d. \] }
\end{itemize}
\end{assumption}

\noindent In particular these assumptions imply (by dominated convergence) that $K \in C^4(\R^d)$, and that both the shot noise field $f_\lambda$ and the Gaussian field $f$ are almost surely $C^1$-smooth.

\begin{theorem}[Invariance principle for shot noise fields; {\cite[Theorem 8]{hs85}}]
\label{t:qualconv}
Assume $g$ satisfies Assumption \ref{a:2}. Then
\[ f_\lambda \Rightarrow f   \]
in law in the topology of uniform convergence on compact sets.
\end{theorem}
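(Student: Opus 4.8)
Here is the proof strategy I would follow. The statement is an invariance principle in the Fréchet space $C(\R^d)$ equipped with the topology of uniform convergence on compact sets, so I would use the classical two-step recipe: (i) convergence of the finite-dimensional distributions of $f_\lambda$ to those of $f$, and (ii) tightness of $\{f_\lambda\}_{\lambda\ge 1}$ in $C(K)$ for every compact $K\subset\R^d$. Together with the fact that $f$ is almost surely continuous, these imply $f_\lambda\Rightarrow f$ in the stated topology.

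For step (i) I would exploit that $F_\lambda$ is a Poisson integral, so the Campbell--Mecke formula gives the \emph{exact} characteristic function: for $x_1,\dots,x_n\in\R^d$, $t_1,\dots,t_n\in\R$, and $\phi(y):=\sum_k t_k\, g(x_k-y)$,
\[
\E\Big[\exp\Big(i\lambda^{-1/2}\textstyle\sum_k t_k\big(F_\lambda(x_k)-\E[F_\lambda(0)]\big)\Big)\Big]
=\exp\Big(\lambda\int_{\R^d}\big(e^{i\phi(y)/\sqrt{\lambda}}-1-i\phi(y)/\sqrt{\lambda}\big)\,dy\Big).
\]
Using $|e^{iz}-1-iz|\le z^2/2$, the integrand is dominated, uniformly in $\lambda\ge 1$, by $\tfrac12\phi(y)^2$, which lies in $L^1(\R^d)$ since $g\in L^2(\R^d)$ (a consequence of the decay in Assumption \ref{a:2}); and $\lambda\big(e^{i\phi/\sqrt\lambda}-1-i\phi/\sqrt\lambda\big)\to-\tfrac12\phi(y)^2$ pointwise. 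Dominated convergence then yields the limit $\exp\!\big(-\tfrac12\int\phi^2\big)=\exp\!\big(-\tfrac12\sum_{k,l}t_kt_l K(x_k-x_l)\big)$, the characteristic function of the corresponding Gaussian vector. This is the routine part.

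For step (ii) the key is a moment bound on increments that is uniform in $\lambda$: $\E\big[|f_\lambda(x)-f_\lambda(y)|^{2m}\big]\le C_m|x-y|^{2m}$ for all $\lambda\ge 1$. I would obtain this via the moment--cumulant formula: writing $\Delta=f_\lambda(x)-f_\lambda(y)$, which is centred, $\E[\Delta^{2m}]$ is a finite sum over set partitions of $\{1,\dots,2m\}$ with no singleton over products of cumulants $\kappa_j(\Delta)$, $j\ge 2$. For a rescaled shot noise field these cumulants are explicit, $\kappa_j(\Delta)=\lambda^{1-j/2}\int_{\R^d}\big(g(x-z)-g(y-z)\big)^j\,dz$. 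Since Assumption \ref{a:2} gives $\|\nabla g\|_\infty<\infty$ and $\nabla g\in L^1(\R^d)$, one has $\int|g(x-z)-g(y-z)|\,dz\le|x-y|\,\|\nabla g\|_{L^1}$ and $|g(x-z)-g(y-z)|\le|x-y|\,\|\nabla g\|_\infty$, hence $\int|g(x-z)-g(y-z)|^j\,dz\le C_j|x-y|^j$; combined with $\lambda^{1-j/2}\le 1$ for $\lambda\ge1,\,j\ge2$, this gives $|\kappa_j(\Delta)|\le C_j|x-y|^j$ and therefore $\E[\Delta^{2m}]\le C_m|x-y|^{2m}$. Choosing $2m>d$, the Kolmogorov--Chentsov criterion (together with the trivial one-point bound $\E[f_\lambda(x_0)^2]=\|g\|_{L^2}^2$) yields tightness of $\{f_\lambda\}$ in $C(K)$ for every compact $K$ and identifies all subsequential limits as almost surely continuous; by step (i) the limit is $f$.

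The main obstacle is precisely this tightness estimate: one must control the entire family of centred increment moments uniformly in $\lambda$, which is where the cumulant structure of Poisson functionals is essential --- the $j$-th cumulant carries the benign factor $\lambda^{1-j/2}$ --- and one must verify that the increment integrals $\int|g(x-z)-g(y-z)|^j\,dz$ decay like $|x-y|^j$, which is exactly where the smoothness and integrability of $g$ in Assumption \ref{a:2} enter. (The same scheme applied to the pair $(f_\lambda,\nabla f_\lambda)$, using the $C^2$ bounds on $g$, would upgrade the convergence to the $C^1$ topology, though only $C^0$ is claimed here.)
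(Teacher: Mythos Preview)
Your argument is correct: the characteristic-function computation for finite-dimensional convergence is the standard Campbell/L\'evy--Khintchine calculation, and your cumulant bound $|\kappa_j(\Delta)|\le C_j|x-y|^j$ together with the moment--cumulant formula gives exactly the Kolmogorov--Chentsov input needed for tightness uniformly in $\lambda\ge 1$.

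There is, however, nothing to compare against in this paper: Theorem~\ref{t:qualconv} is not proved here but is quoted verbatim as Theorem~8 of \cite{hs85} (see also the remark immediately following the statement). Your proof sketch is essentially the classical approach one finds in that reference and its descendants, so in effect you have reconstructed the cited result rather than offered an alternative to anything the present authors do.
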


\begin{remark}
The invariance principle in Theorem \ref{t:qualconv} actually holds under slightly weaker conditions (see \cite[Theorem 8]{hs85}) but for simplicity we do not state the most general version.
\end{remark}

To prove our main result (Theorem~\ref{t:main}) we need to quantify the invariance principle in terms of a `coupling distance', that is, we want that $f_\lambda$ and $f$ may be coupled on some sufficiently rich probability space so that they are close with high probability. For $R \ge 1$, let $B(R)$ denote the Euclidean ball of radius $R$ centred at the origin. For $D \subset \R^d$, let $\| \cdot \|_{\infty, D}$ denote the sup-norm on $D$, abbreviating $\|\cdot\|_\infty = \|\cdot\|_{\infty, \R^d}$.

\begin{theorem}[Strong invariance principle for shot noise fields]
\label{t:sip}
$\,$
\begin{enumerate}
\item Assume $g$ satisfies Assumption \ref{a:2} for $\beta > d$. Then there exist $c_1, c_2 > 0$, depending only on $\beta$ and the dimension $d$, such that for every $\lambda \ge 1$ there is a coupling of $f_\lambda$ and $f$ satisfying, for every $R, t \ge 1$,
\[ \P \Big[  \| f_\lambda - f \|_{\infty, B(R)}  \ge  \sigma_d(\lambda) t \Big]  \le c_1 R^d \lambda^{c_{d, \beta}}  e^{-c_2 t} , \]
where
\begin{equation}
\label{e:sigma}
 \sigma_d(\lambda)
= \begin{cases}
\lambda^{-1/2} & d  =1 ,  \\
\lambda^{-1/2}  \sqrt{\log \lambda} &  d = 2 , \\
\lambda^{-1/d} & d \ge 3 ,   \\
\end{cases} \quad \text{and} \quad c_{d, \beta} =  1 + \frac{d}{2}  + \frac{d}{\beta - d}  .
\end{equation}
\item Moreover, suppose $I$ is a finite set of multi-indices, and there is a $\beta > d$ such that each of the partial derivatives $(\partial ^{\alpha }g)_{\alpha \in I}$ satisfy Assumption \ref{a:2} for $\beta$. Then for every $\lambda \ge 1$ there is a coupling of $f_\lambda$ and $f$ satisfying, for every $R, t \ge 1$,
\[ \P \Big[ \exists \alpha \in I: \| \partial ^{\alpha }f_\lambda - \partial ^{\alpha }f \|_{\infty, B(R)}  \ge  \sigma_d(\lambda) t \Big]  \le c_1 | I |  R^d \lambda^{c_{d, \beta}}  e^{-c_2 t} ,\]
where $c_1,c_2 > 0$ depend only on $\beta$ and $d$.
\end{enumerate}
\end{theorem}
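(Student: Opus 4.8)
The plan is to build the coupling from the most natural source of randomness: the Poisson process $\mathcal{P}_\lambda$ restricted to a large box, together with extra independent randomness to fill in the Gaussian field. The key structural fact is that $f_\lambda(x) - \mathbb{E}[f_\lambda(x)]$ is a sum of i.i.d.\ contributions indexed by the points of $\mathcal{P}_\lambda$, each contribution being $\lambda^{-1/2}(g(x-i) - \int g)$-type, while $f$ is Gaussian with the same covariance; so on any fixed point the problem is a one-dimensional KMT-type strong approximation, and the task is to make it uniform in $x \in B(R)$ with the stated modulus $\sigma_d(\lambda)$. First I would discretise: since $g \in C^2$ with polynomial decay, both $f_\lambda$ and $f$ are $C^1$ with well-controlled (sub-exponential, via concentration for Poisson functionals and Borell--TIS for the Gaussian) sup-norms of their gradients on $B(R)$; hence it suffices to couple the two fields on a fine lattice $\delta \mathbb{Z}^d \cap B(R)$ with $\delta$ a small negative power of $\lambda$, and pay only a $\mathrm{poly}(\lambda)$ factor in the number of points (absorbed into the $\lambda^{c_{d,\beta}}$ prefactor) plus a negligible discretisation error $\lesssim \delta \cdot (\text{gradient bound})$.

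Next, on the finite lattice I would invoke a multivariate strong approximation / Komlós--Major--Tusnády-type theorem for sums of i.i.d.\ bounded (or sub-exponential) random vectors: writing $f_\lambda$ restricted to the lattice as $\lambda^{-1/2} \sum_{i} \xi_i$ with $\xi_i$ the centred vector $(g(x-i))_{x}$ evaluated at the lattice (after a compound-Poisson-to-i.i.d.\ reduction, e.g.\ conditioning on $|\mathcal{P}_\lambda \cap \text{box}|$ and using that a Poisson number of i.i.d.\ terms concentrates), the Zaitsev/Einmahl multidimensional KMT theorem gives a coupling with a Gaussian vector of matching covariance such that the $\ell^\infty$ error is $O(\lambda^{-1/2}\log(\text{dimension}) \cdot t)$ with exponential tail $e^{-c_2 t}$. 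The dimension here is $\mathrm{poly}(\lambda) \cdot R^d$, so its logarithm contributes the $\sqrt{\log\lambda}$ in $d=2$ (and in $d=1$ the one-dimensional rate has no log); the Gaussian vector obtained is a restriction of a genuine stationary Gaussian field with kernel $K$ (extend it off the lattice independently conditionally on the lattice values, which is consistent since $K$ is smooth and positive-definite), and then one transfers the discretisation estimate back to all of $B(R)$. The dimension dependence $d \ge 3$, where the rate degrades to $\lambda^{-1/d}$, is where the naive KMT rate $\lambda^{-1/2}$ is beaten by the requirement that the approximating field be exactly the right Gaussian field at lattice scale $\delta \sim \lambda^{-1/d}$: below that scale the Poisson granularity dominates, so one cannot do better, and the modulus $\sigma_d$ is dictated by balancing the KMT error against the lattice spacing needed for the gradient bound to control off-lattice oscillation.

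For part (2), the same argument applies verbatim with the vector $\xi_i$ enlarged to record $(\partial^\alpha g(x-i))_{\alpha \in I, \, x \in \text{lattice}}$: each $\partial^\alpha g$ satisfies Assumption \ref{a:2}, so $\partial^\alpha f_\lambda$ and $\partial^\alpha f$ are themselves $C^1$ with the same concentration estimates, the covariance structure of the enlarged vector is exactly that of $(\partial^\alpha f)_{\alpha \in I}$ by differentiating $K$ under the integral, and the KMT coupling is simultaneous across $\alpha$ at the cost of a factor $|I|$ in the dimension (hence inside the logarithm, negligible) and a union bound factor $|I|$ in the final probability. The one point requiring care is that the \emph{same} coupling must serve all $\alpha$ simultaneously — but since $\partial^\alpha f_\lambda$ is a \emph{deterministic} linear functional of the underlying $\mathcal{P}_\lambda$, and we approximate $\mathcal{P}_\lambda$ itself (equivalently its empirical field) by a single Gaussian object, this is automatic.

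The main obstacle I anticipate is the quantitative multidimensional strong approximation in the regime where the dimension of the vector grows polynomially in the sample size $\lambda$: off-the-shelf KMT/Zaitsev results are typically stated for fixed dimension, so one must either track constants through Zaitsev's or Einmahl's proof to see the $\log(\dim)$ dependence explicitly, or — more likely the route taken — avoid a fully multivariate statement and instead run a one-dimensional KMT pointwise on the lattice combined with a chaining/quantitative-CLT argument over the lattice (using Rozanov-type bounds on the covariance decay from the polynomial decay of $g$) to upgrade pointwise coupling to uniform coupling. Making the exponential tail $e^{-c_2 t}$ survive this chaining, uniformly in $\lambda$ and $R$, with the prefactor only polynomial in $R$ and $\lambda$, is the delicate part; the decay exponent $\beta > d$ enters precisely here through the term $d/(\beta-d)$ in $c_{d,\beta}$, reflecting how far the process must be localised (a box of side $\sim \lambda^{1/(\beta-d)}$-ish relative to $R$) before the tail of $g$ is negligible.
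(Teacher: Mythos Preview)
Your structural scaffolding is right and matches the paper: discretise $B(R)$ to a mesh $\varepsilon\Z^d$, truncate $g$ to a compactly supported $g^r$, control the off-mesh and off-truncation errors via $C^1$ bounds on $f_\lambda,f$ (sub-exponential for the shot noise, Gaussian for $f$), and absorb the mesh cardinality into the polynomial prefactor. Part~(2) also matches: since $\partial^\alpha f_\lambda = (\partial^\alpha g)\star\widetilde{\mathcal{P}_\lambda}$ and $\partial^\alpha f = (\partial^\alpha g)\star\widetilde{\mathcal{W}}$, a single coupling of the underlying measures serves all $\alpha$ simultaneously, and one takes a union bound over $I$.

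The core coupling step, however, is done quite differently in the paper, and your route has a real obstacle that you yourself flag. You propose a multivariate Zaitsev/Einmahl KMT on the vector of lattice values; the dimension of that vector is $\mathrm{poly}(\lambda)\cdot R^d$, and the known multivariate results do not deliver an exponential tail with only logarithmic dependence on the dimension in that regime. Your fallback of ``pointwise one-dimensional KMT plus chaining'' is not enough either: a pointwise coupling at each lattice site gives you a \emph{different} probability space at each site, and there is no mechanism to make the approximating Gaussians at different sites have the \emph{correct joint} covariance $K$.

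The paper sidesteps this by coupling the random \emph{measures} $\widetilde{\mathcal{P}_\lambda}$ and $\widetilde{\mathcal{W}}$ once and for all, independently of the kernel, via Koltchinskii's strong invariance principle for the empirical process \emph{indexed by functions}. Concretely: on each unit cube one couples $N\sim\mathrm{Pois}(\lambda)$ to a standard Gaussian (a one-dimensional Tusn\'ady lemma), and conditionally on $N$ couples the empirical process to a Brownian bridge so that $|\widetilde{\mathcal{E}_N}(h)-\widetilde{\mathcal{W}}_0(h)|$ is controlled \emph{simultaneously for every test function} $h$ by a dyadic $L^2$-modulus $q_m(h)$ of $h$ with $m\approx\log_2\lambda$. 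One then simply plugs in $h=g^r(x-\cdot)$ for each mesh point $x$; the resulting Gaussian field is automatically the right one because $f=g\star\widetilde{\mathcal{W}}$.

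This also corrects your account of where $\sigma_d(\lambda)$ comes from. It is \emph{not} the logarithm of the lattice cardinality (that would give $\sqrt{\log\lambda}$ in every dimension). Rather, the Poincar\'e--Wirtinger inequality applied to the dyadic pieces gives
\[
q_m(h)\ \lesssim\ \|\nabla h\|_{L^2}\times
\begin{cases}
1 & d=1,\\
\sqrt{m} & d=2,\\
2^{(1/2-1/d)m} & d\ge 3,
\end{cases}
\]
and with $m\approx\log_2\lambda$ this yields $\lambda^{-1/2}\cdot q_m\approx\sigma_d(\lambda)$. The exponent $d/(\beta-d)$ in $c_{d,\beta}$ arises exactly as you say, from the truncation radius $r\sim(\lambda t)^{1/(\beta-d)}$ needed to make the tail of $g$ negligible.
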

\begin{remark}
\label{r:kmt}
The upshot of Theorem \ref{t:sip} is that the convergence of $f_\lambda$ to $f$ takes place on the scale $\sigma_d(\lambda) \log \lambda$, in the sense that one can couple $f_\lambda$ and $f$ so that their difference on a compact set $D \subseteq \R^d$ (or even a polynomially-growing ball $B(\lambda^c)$) is $O(\sigma_d(\lambda)  \log \lambda)$ with high probability. We do not expect this scale to be optimal for all $d \ge 1$, although we believe it to be optimal up to a logarithmic factor if $d \in \{1,2\}$.

\smallskip
To justify the optimality, at least heuristically, recall the celebrated result of Koml\'{o}s-Major-Tusn\'{a}dy (KMT) \cite{kmt75} on coupling the empirical process $\widetilde{\mathcal{E}_n}(s) = n^{-1/2} ( \sum_{i = 1}^n \id_{X_i \le s} - n s )$ on $[0, 1]$, where $X_i$ denote i.i.d.\ random variables uniformly distributed on $[0, 1]$, with a Brownian bridge $\widetilde{\mathcal{W}}_0(s)$. One version of the result states that, for each $n \ge 1$ the coupling can be done so that, for all $t \ge 0$,
\[  \P \Big[  \max_{s \in [0, 1]} |\widetilde{\mathcal{E}_n}(s) - \widetilde{\mathcal{W}}_0(s) |  \ge n^{-1/2} ( c_1 \log n + t ) \Big] \le  c_2 e^{- c_3 t} , \]
where $c_1, c_2, c_3 > 0$ are constants independent of $n$. This implies that the coupling error is at most $O(n^{-1/2} \log n)$ with high probability, and this is thought to be best possible. Further, Beck \cite{bec85} has shown that, in general dimension $d \ge 1$, the analogous strong invariance principle has error that is at least of order $n^{-1/2} (\log n)^{(d-1)/2}$. Hence it is natural to expect that the error in the strong invariance principle for shot noise fields is at least of order $ \lambda^{-1/2} (\log \lambda)^{c_d}$ where $c_d \ge 1$ may depend on the dimension. Indeed one cannot hope to couple $f_\lambda$ and $f$ on any scale smaller than $\lambda^{-1/2}$, since that is the scale on which the marginals converge (by the Berry-Esseen theorem for instance).
\end{remark}

\begin{remark}
Although we are unaware of any result in the literature directly comparable to Theorem \ref{t:sip}, there is a line of related work that has established strong invariance principles for other functionals of the Poisson point process $\mathcal{P}_\lambda$, in particular of the form $(\sum_{i \in \mathcal{P}_\lambda}  \id_{i \in S})_{S \in \mathcal{S}}$, where $\mathcal{S}$ is a class of subsets of $[0, 1]^d$ \cite{cs75, r76a, mass89}. For classes $\mathcal{S}$ that (i) are not too big, and (ii) contain sets with smooth enough boundaries, a strong invariance principle analogous to Theorem \ref{t:sip} has been shown to hold with $\sigma_d(\lambda)$ replaced by $\lambda^{-1/(2d)}$ up to logarithmic factors \cite{mass89}; this result is based on the dyadic coupling scheme of \cite{kmt75}. The fact that the error bound in Theorem \ref{t:sip} is smaller (at least in $d \ge 2$) is due to the smoothness of the kernel $g$ compared to the test functions $\id_{i \in S}$. One can also obtain faster rates (although not as fast as in Theorem \ref{t:sip} in general) by restricting $\mathcal{S}$ to the class of rectangles, which partially compensates for the lack of smoothness (see, e.g., \cite{cs75, tus77, rio96} in the setting of empirical processes).

\smallskip
Interestingly, the works \cite{cs75, r76a, mass89} use the strong invariance principle for the Poisson measure as a means to prove a strong invariance principle for the empirical process via `Poissonisation', whereas our approach follows the reverse pathway, deducing Theorem~\ref{t:sip} from known results on the empirical process due to Koltchinskii \cite{kol94} (and see also \cite{rio94} for related results), which are ultimately based on the dyadic scheme of \cite{kmt75}.
\end{remark}

\begin{remark}
In dimensions $d \ge 3$ it might be possible to improve Theorem \ref{t:sip} by following the alternative approach in \cite{rio96} based on the Strassen-Dudley representation theorem and the multivariate central limit theorems of Zaitsev \cite{zai87a, zai87b} rather than the dyadic coupling scheme of \cite{kmt75}; see also \cite{bm06} for related results. In fact, assuming sufficient smoothness of $g$, we believe that one could achieve the bound
\[ \P \Big[  \| f_\lambda - f \|_{\infty, B(R)}  \ge  \lambda^{-1/2 + \delta} t \Big]  \le c_1 R^d \lambda^{c_2} e^{-c_3 t} , \]
 for arbitrary small $\delta > 0$. Since we are mainly interested in the case $d=2$, and since in $d \in \{1,2\}$ the KMT-based strategy leads to a sharper bound, we do not pursue this here.
\end{remark}

\subsection{Remarks on higher dimensions}
Although our strategy to prove Theorem~\ref{t:main} is essentially planar, since the convergence to a Gaussian limit occurs in all dimensions we believe that if $d \ge 2$
\begin{equation}
\label{e:gend}
 \ell_c(f_\lambda) = \ell_c(f) + O \big(\sigma_d(\lambda) \lambda^{1/2} \log \lambda \big) ,
 \end{equation}
where $\ell_c(f) \in \mathbb{R}$ is the critical level of the limiting Gaussian field $f$ (defined by analogy with~\eqref{e:cl}, and recall that $\ell_c(f)$ depends on the kernel $g$). Only in dimension $d=2$ do we have $\ell_c(f) = 0$ for reasons of planar duality (see Theorem \ref{t:qualzero} below), whereas in general we expect $ \ell_c(f) < 0$ (see \cite{drrv21} for a recent proof for a class of Gaussian fields, and c.f.\ Bernoulli percolation on $\mathbb{Z}^d$, for which the critical parameter satisfies $p_c(\mathbb{Z}^d) < p_c(\mathbb{Z}^2) = 1/2$ for $d \ge 3$ \cite{gr99}). We also do not rule out that $\sigma_d(\lambda) \lambda^{1/2} \log \lambda$ in \eqref{e:gend} could be replaced by $(\log \lambda)^{c_d}$ or even something smaller.

\begin{question}
What is the order of $\ell_c(f_\lambda) - \ell_c$? How does it depend on the kernel $g$ and on the dimension $d$?
\end{question}

{ Although even the qualitative convergence $\ell_c(f_\lambda) \to \ell_c(f)$ remains out of reach  if $d \ge 3$,} using the recent results of Severo \cite{sev21} in place of Theorem \ref{t:qualzero} below, our strategy could be adapted to prove the rigorous lower bound
\begin{equation}
\label{e:gend2}
 \ell_c(f_\lambda) \ge \ell_c(f) + o(1 ) ,
 \end{equation}
 under the conditions in Theorem \ref{t:main} and assuming $g \ge 0$ (the latter condition is needed to apply the results in \cite{sev21}). However the matching upper bound does not follow from our strategy, essentially because one lacks a finite-size criterion for percolation in $d \ge 3$ (see Section \ref{s:fsc}).
 
\subsection{Overview of the rest of the paper}
In Section \ref{s:proof} we prove Theorem \ref{t:main} subject to two intermediate results, namely the strong invariance principle stated in Theorem \ref{t:sip} above, and a continuity criterion for the critical level stated in Proposition \ref{p:cont} below. These intermediate results are the focus of the subsequent sections: in Section \ref{s:prelim} we collect preliminary results on shot noise and Gaussian fields, in Section \ref{s:conv} we prove the strong invariance principle in Theorem~\ref{t:sip}, and finally in Section \ref{s:cont} we prove the continuity criteria in Proposition~\ref{p:cont}.

\subsection{Acknowledgements}
The second author was supported by the Australian Research Council (ARC) Discovery Early Career Researcher Award DE200101467. The authors thank Chinmoy Bhattacharjee, Michael Goldman, Martin Huesmann, Manjunath Krishnapur and Felix Otto for helpful discussions on strong invariance principles for Poisson random measures, and Giovanni Peccati for suggesting to extend the strong invariance principle to derivatives of the field.

\medskip
\section{The asymptotic expansion of the critical level}
\label{s:proof}

In this section we prove our main result (Theorem~\ref{t:main}) subject to intermediate statements which are established in the subsequent sections. Throughout this section we restrict our attention to the planar case $d=2$.

\smallskip The proof consists of using the convergence of shot noise fields to a limiting Gaussian field, described in Section \ref{s:gaus} above, to `pull back' known results on level set percolation of the Gaussian limit (see Theorems \ref{t:qualzero} and \ref{t:quantzero} below). In order to achieve this we use a quantitative continuity result (see Proposition \ref{p:cont} below) that relates the critical level of shot noise fields with that of the Gaussian limit.

\subsection{Level set percolation of Gaussian fields}
In this section we recall the relevant results on level set percolation of planar Gaussian fields $f$. By planar duality, and the fact that $f$ and $-f$ are equal in law, it is natural to expect that $\ell_c(f) = 0$ for centred stationary planar Gaussian fields. Indeed this, and more precise descriptions of the phase transition at $\ell = 0$, have recently been established for a wide class of Gaussian fields (see \cite{bg17, rv20, mv20, mrv20} and references therein).

\smallskip
Let us introduce first the concept of a \textit{sharp threshold} for rectangle crossing events. For a level $\ell \in \R$ and $a,b > 0$, let $\cross_\ell[a, b]$ denote the event that $\{f \le \ell \} \cap ([0,a] \times [0,b])$ contains a path that crosses the rectangle $[0,a] \times [0,b]$ from left to right (i.e.\ intersects both $\{0\} \times [0, b]$ and $\{a\} \times [0, b]$). We say that $f$ has a \textit{sharp threshold (at $\ell = 0$)} if, for every $w > 0$ and $\rho > 0$,
\begin{equation}
\label{e:ncw1}
\lim_{R \to \infty} \P[ \textrm{Cross}_{-w}[\rho R, R] ] =0  \quad \text{and} \quad \lim_{R \to \infty} \P[ \textrm{Cross}_{w}[\rho R, R] ] = 1 .
\end{equation}
If $f$ has symmetry under rotation by $\pi/2$, then since $f$ and $-f$ are equal in law each of the statements in \eqref{e:ncw1} implies the other (see the discussion at the beginning of Section \ref{sec:continuity}).

\begin{theorem}[{\cite[Theorem 1.3]{mrv20}}]
\label{t:qualzero}
Let $f$ be a $C^3$-smooth centred stationary isotropic planar Gaussian field with covariance $K$ satisfying:
\begin{itemize}
\item $K(x) (\log \log |x|)^3 \to 0$ as $|x| \to \infty$;
\item The vector $( f(0), f(x), \nabla f(0), \nabla f(x) )$ is non-degenerate for each $x \in \R^d \setminus \{  0 \}$.
\end{itemize}
Then $\ell_c(f) = 0$ and $f$ has a sharp threshold in the sense of \eqref{e:ncw1}.
\end{theorem}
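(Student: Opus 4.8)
The plan is to reduce the statement to two quantitative facts about crossing probabilities: a Russo--Seymour--Welsh (RSW) box-crossing estimate at level $\ell=0$, and a sharp-threshold estimate. Precisely, I would establish (a) that for every $\rho>0$ there is $c=c(\rho)>0$ with $c \le \P[\cross_0[\rho R,R]] \le 1-c$ for all $R\ge 1$, and (b) that for every $\rho>0$ and $w>0$ one has $\P[\cross_{-w}[\rho R,R]]\to 0$ and $\P[\cross_{w}[\rho R,R]]\to 1$ as $R\to\infty$. Statement (b) is exactly the sharp-threshold claim \eqref{e:ncw1}, and it also yields $\ell_c(f)=0$ by a standard argument: for $\ell<0$ the probability that $\{f\le\ell\}$ radially crosses a large annulus tends to $0$, so a union bound over a dyadic sequence of annuli rules out an unbounded component of $\{f\le\ell\}$ almost surely; for $\ell>0$ the crossing probabilities of all rectangles tend to $1$, and a renormalisation/block argument — gluing crossings of overlapping rectangles, with their joint behaviour controlled by the decorrelation of $f$ — produces an unbounded component with positive probability, hence almost surely by ergodicity.

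For the RSW estimate the main difficulty is the absence of positive association: $K$ is not assumed non-negative, so the Harris--FKG inequality — the engine of the classical RSW arguments (Tassion's geometric construction, and \cite{bg17} for the Bargmann--Fock field) — is unavailable; moreover $K$ is allowed to decay very slowly, so one cannot simply reduce to a finite-range field. The route I would follow is that of \cite{mrv20}: first use the non-degeneracy of $(f(0),f(x),\nabla f(0),\nabla f(x))$ to pass to a discretisation in which the primal level set $\{f\le 0\}$ and the dual level set $\{f\ge 0\}$ genuinely dualise (no pathological tangencies of level lines) and in which local surgery on crossing paths costs only a bounded factor; then replace FKG by quasi-independence/decoupling estimates between crossing events supported in well-separated regions, whose cost is controlled by the decorrelation of $f$; and finally run a Tassion-type geometric bootstrap, together with a self-duality symmetrisation at $\ell=0$ (valid here since $f$ is isotropic and $f\stackrel{d}{=}-f$), to promote crossing probabilities that are merely bounded away from $0$ and $1$ at one scale to the full family of box-crossing bounds at all scales and aspect ratios.

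For the sharp-threshold estimate I would run a randomized algorithm that explores the $\{f\le\ell\}$-cluster crossing a rectangle and invoke the OSSS inequality. The controlling quantity is the revealment of the algorithm — the supremum over $x$ of the probability that $f$ is queried near $x$ — and bounding it uniformly, while also controlling the errors accumulated across the renormalisation scales, is where the hypothesis $K(x)(\log\log|x|)^3\to 0$ enters: the decorrelation of $f$ must be quantitatively strong enough to beat this accumulation, and the specific $(\log\log)^3$ rate comes out of that balancing. The OSSS inequality then gives a differential inequality of the form $\partial_\ell\,\P[\cross_\ell[\rho R,R]]\gtrsim (\mathrm{revealment})^{-1}\,\P[\cross_\ell](1-\P[\cross_\ell])$; integrating it against the RSW bound (which keeps $\P[\cross_\ell]$ bounded away from $0$ and $1$ on a fixed interval around $0$) forces the transition window to shrink to a point, and self-duality at $\ell=0$ identifies that point as $\ell=0$.

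I expect the RSW estimate without FKG to be the main obstacle, as it must simultaneously cope with the lack of positive association, the weak decay of correlations, and the combinatorics of level lines near $\ell=0$ — where the non-degeneracy hypothesis is indispensable but has to be exploited carefully in passing to a discrete model. Once RSW is in hand, the OSSS-based threshold step and the concluding percolation argument are comparatively routine, the only delicate point being the bookkeeping that ties the algorithm's revealment to the triple-logarithmic decorrelation rate.
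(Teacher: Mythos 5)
This theorem is not proved in the paper; it is imported verbatim from \cite[Theorem 1.3]{mrv20}, and the paper uses it as a black box (in particular to establish \eqref{e:main1} via Proposition \ref{p:cont}). So there is no ``paper's own proof'' to compare against, and your proposal is really an attempt to reconstruct the argument of the cited reference.

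As such a reconstruction, your high-level outline is faithful to the strategy of \cite{mrv20}: (i) an RSW box-crossing estimate at level $0$ obtained without the Harris--FKG inequality, by passing to a discretisation where the non-degeneracy of $(f(0),f(x),\nabla f(0),\nabla f(x))$ rules out pathological level-line tangencies, replacing FKG by quasi-independence/sprinkled decoupling of well-separated crossing events, and running a Tassion-type bootstrap combined with self-duality (using isotropy and $f\overset{d}{=}-f$); (ii) an OSSS-based sharp-threshold argument whose revealment and accumulated decoupling errors across scales are what the hypothesis $K(x)(\log\log|x|)^3\to 0$ is calibrated to control; and (iii) a standard annulus/renormalisation argument to convert crossing estimates into $\ell_c(f)=0$. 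You correctly identify the RSW step without positive association as the main technical obstacle and the non-degeneracy hypothesis as the ingredient that makes the discretisation and local surgeries work. The only caveat is that what you have written is a plan, not a proof: each of the three steps (the discrete dualisation, the quasi-independence input to Tassion's construction, and the quantitative bookkeeping tying revealment to the $(\log\log)^3$ rate) is a substantial argument in \cite{mrv20}, and a referee would not accept the sketch as a self-contained derivation. For the purposes of this paper, the correct and economical move is exactly what the authors do: cite the result.
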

\begin{remark}
The fact that $\ell_c(f) = 0$ and the existence of a sharp threshold in the sense of~\eqref{e:ncw1} are closely related, but it is not known that they are equivalent in general. In \cite{mrv20} a quantitative version of \eqref{e:ncw1} was used to deduce that $\ell_c(f) = 0$, and for our purposes it turns out that $\ell_c(f) = 0$ alone is insufficient (see however Remark \ref{r:nosharp}).
\end{remark}

While the conclusions of Theorem \ref{t:qualzero} will be sufficient to prove \eqref{e:main1} in Theorem \ref{t:main}, for the stronger result \eqref{e:main2} we will need a quantitative improvement on \eqref{e:ncw1} proven in \cite{mv20} under stronger conditions (although this improvement is believed to be true more generally). We say that $f$ has a \textit{near-critical window of size $w$ (at $\ell = 0$)} if there exists a positive function $w = w(R) > 0$ such that, for every $\rho > 0$,
\begin{equation}
\label{e:ncw2}
\lim_{R \to \infty} \P[ \textrm{Cross}_{-w(R)}[\rho R, R] ] = 0  \quad \text{and} \quad \lim_{R \to \infty} \P[ \textrm{Cross}_{w(R)}[\rho R,  R] ] = 1 .
\end{equation}
This property is monotonic in $ w$ in the sense that if it holds for some $w$ it holds for any $w' \ge w$. As for \eqref{e:ncw1}, if $f$ is Gaussian and has symmetry under rotation by $\pi/2$ then if one of the statement holds for all $ \rho >0$, so does the other. Note that if $f$ has a sharp threshold then $f$ has a near-critical window of size $w$ for arbitrary small constant $w(R) = w' > 0$. We say that $f$ has a $\textit{polynomial near-critical window}$ if it has a near-critical window of size $w(R) = R^{-c}$ for some $c > 0$.

\begin{theorem}[{\cite[Theorem 1.15]{mv20}}]
\label{t:quantzero}
Let $g : \mathbb{R}^2 \to \mathbb{R}$ be a $C^3$-smooth positive isotropic function, and assume there exists a $\beta > 2$ such that, for every multi-index $\alpha$ such that $|\alpha| \le 3$,
\[ |\partial^\alpha g(x)| \le (1+ |x|)^{-\beta} , \quad x \in \R^2. \]
Let $f$ be the $C^2$-smooth centred stationary isotropic planar Gaussian field whose covariance is $K(x) = \int g(-y) g(x-y) \, dy $. Then $\ell_c(f) = 0$ and $f$ has a polynomial near-critical window.
\end{theorem}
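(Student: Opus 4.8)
This statement is \cite[Theorem 1.15]{mv20} applied to the field $f$ with covariance $K = g \ast \tilde g$ (where $\tilde g(x) := g(-x)$); the plan is therefore to check that $f$ satisfies the hypotheses of that result, and then to sketch the idea of its proof for completeness. First I would record the structural properties of $f$. Since $g \ge 0$, the covariance $K(x) = \int g(-y) g(x-y)\,dy$ is nonnegative at every $x$, so by Pitt's theorem (the FKG inequality for Gaussian fields with nonnegative covariance) $f$ is positively associated. The polynomial decay of $g$ passes to $K$: for $|\alpha| \le 3$ one has $\partial^\alpha K = (\partial^\alpha g) \ast \tilde g$, and splitting the convolution integral at $\{|y| \le |x|/2\}$ gives $|\partial^\alpha K(x)| \le c(1+|x|)^{-\beta}$; distributing derivatives onto both factors moreover shows $K \in C^6(\R^2)$ with all derivatives of order $\le 6$ decaying like $(1+|x|)^{-\beta}$, so $f$ is almost surely $C^2$-smooth and enjoys the mixing needed for quasi-independence. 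Finally $\widehat K = |\widehat g|^2 \ge 0$, and since $g$ is integrable, positive and isotropic we have $\widehat g(0) = \int g > 0$ with $\widehat g$ continuous, so $\widehat K$ is strictly positive on a ball about the origin; this yields the non-degeneracy conditions used in \cite{mv20}, in particular that $(f(0), f(x), \nabla f(0), \nabla f(x))$ is non-degenerate for every $x \ne 0$.

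Next I would recall the two-part mechanism behind \cite{mv20}. For the qualitative statement $\ell_c(f) = 0$ (with a sharp threshold): using positive association, the $\pi/2$-rotation symmetry, the self-duality $f \overset{d}{=} -f$, and quasi-independence, one runs the box-crossing (Russo--Seymour--Welsh) machinery for planar Gaussian fields to obtain that the crossing probabilities $\P[\cross_0[\rho R, R]]$ are bounded away from $0$ and $1$ uniformly in $R$; a standard gluing argument then gives percolation of $\{f \le \ell\}$ for every $\ell > 0$, while the vanishing of crossing probabilities at negative levels forbids percolation for $\ell < 0$, so $\ell_c(f) = 0$. This is essentially the content of Theorem \ref{t:qualzero}, carried out here at $C^2$-regularity with positivity assumed, which is the simpler case.

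For the \emph{polynomial} near-critical window I would regard $u_R(\ell) := \P[\cross_\ell[\rho R, R]]$ as a nondecreasing function of $\ell$ and aim for a Margulis--Russo-type differential inequality $u_R'(\ell) \gtrsim R^{c}\, u_R(\ell)\big(1 - u_R(\ell)\big)$, uniform for $\ell$ in a neighbourhood of $0$. Such an inequality can be produced either from a Kac--Rice formula expressing $u_R'(\ell)$ as the expected length of the pivotal set, or via the OSSS inequality applied to a randomised algorithm that explores $f$ along a random interface; in both routes the essential input is a polynomial lower bound on the total influence (respectively a polynomial upper bound on the revealment), which is obtained from quantitative RSW estimates together with quasi-multiplicativity of arm events for the level sets of $f$. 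Integrating the differential inequality shows that the set of levels on which $u_R \in (\eps, 1-\eps)$ has width $O(R^{-c})$, and since the sharp threshold pins this set around $\ell = 0$, one concludes that $f$ has a near-critical window of size $R^{-c}$.

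The transfer of hypotheses from $g$ to $f$ is routine. The real difficulty — carried out in \cite{mv20} — is the quantitative input behind the differential inequality: establishing quasi-multiplicativity of arm events and RSW crossing estimates that are uniform in the level $\ell$ near $0$ and in the aspect ratio, for the level sets of a merely $C^2$-smooth Gaussian field.
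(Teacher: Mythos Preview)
The paper does not supply a proof of this theorem; it is quoted verbatim from \cite[Theorem~1.15]{mv20} and used as a black box in the proof of Theorem~\ref{t:main}. Your proposal correctly recognises this and does the natural thing: verify that the field $f$ with covariance $K = g \ast \tilde g$ meets the standing hypotheses of \cite{mv20} (positive association via Pitt's theorem since $g \ge 0$ forces $K \ge 0$; polynomial decay and $C^6$-smoothness of $K$ inherited from the decay of $\partial^\alpha g$; non-degeneracy from the spectral density $\widehat K = |\widehat g|^2$ being strictly positive on a neighbourhood of the origin), and then sketch the mechanism of the proof in \cite{mv20} for completeness. Your sketch of that mechanism --- RSW and duality for the qualitative sharp threshold, and a Margulis--Russo-type differential inequality driven by OSSS or Kac--Rice pivotal estimates for the polynomial window --- is an accurate summary of the strategy there, and the identification of quasi-multiplicativity of arm events and uniform RSW as the hard inputs is on point.
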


\begin{remark}
By analogy with planar Bernoulli percolation, it is natural to conjecture that the true near-critical window is of polynomial order with exponent $1/\nu = 3/4$ (where $\nu = 4/3$ is the \textit{correlation length} exponent for Bernoulli percolation, see \cite[Chapters 9 \& 10]{gr99}), in the sense that \eqref{e:ncw2} holds for $w(R) R^{3/4} \to \infty$ but does not hold if $w(R) R^{3/4} \to 0$. In \cite{mv20} it was shown that the exponent satisfies $1/\nu \le 1$ if it exists. On the other hand, if one relaxes the decay assumption on $g$ but instead requires that $g$ be regularly varying at infinity with index $-\beta \in (-\infty,-1)$ then one might expect that in general the critical window is of polynomial order with exponent $1/\nu = \min\{\beta-1, 3/4\}$.
 \end{remark}

\subsection{Continuity of the critical level}
\label{sec:continuity}
In order to take advantage of the convergence of the shot noise field to a Gaussian limit (Theorem \ref{t:sip}), we need a quantitative continuity result that relates critical levels through the limit. We state this continuity result in a general form that applies to arbitrary sequences of stationary planar random fields that converge to a limit.

\smallskip
Let us first introduce a mild regularity assumption on the fields. Recall the crossing event $\textrm{Cross}_\ell[a, b]$, and let $\textrm{Cross}^{tb}_\ell[a, b]$ denote the `top-bottom' version of this event, that is, the event that $\{f \le \ell \} \cap ([0,a] \times [0,b])$ contains a path that intersects both $[0,a] \times \{b\}$ and $[0,a] \times \{0\}$. We say that a planar random field $f$ is \textit{regular} if, for each $\ell \in \R$ and $a,b \ge 0$, all translations and rotations of the events $\{ \pm f \in \textrm{Cross}_\ell[a, b]\}$ are measurable, and the events
\[ \{f \in  \textrm{Cross}_\ell[a, b] \} \quad \text{and} \quad \{ - f \in  \textrm{Cross}^{tb}_{-\ell}[a, b] \} \]
partition the probability space up to a null set. In other words, for any rectangle $R$ and $\ell \in \R$ almost surely exactly one of the following holds: (i) $R \cap \{f \le \ell\}$ contains a path from left to right, or (ii) $R \cap \{f  \ge \ell \}$ contains a path from top to bottom.
\smallskip
Under Assumptions \ref{a:3} and \ref{a:2} the shot noise fields $F_\lambda$ and the Gaussian field $f$ are regular. This is a consequence of their level lines being $C^1$-smooth curves which almost surely do not have tangential intersections with fixed line-segments (see \cite[Lemma 3.1]{lm19} and \cite[Lemma A.9]{rv19} respectively for details).

\smallskip
Let us also introduce the concept of \textit{sprinkled decoupling}, which is the main property we use to guarantee continuity of the critical level (see also \cite{pr15,mv20} for other appearances of this property). Recall the crossing events $\textrm{Cross}_\ell[a, b]$, which are increasing with respect to the field. We say that a planar regular random field $f$ satisfies the \textit{sprinkled decoupling property} with \textit{sprinkling function} $h(R) \ge 0$ and \textit{error function} $e(R) \ge 0$ if, for any $R \ge 1$ and $\ell \in \R$, and any events $A, B$ which are translations and rotations of $\{  f \in \textrm{Cross}_\ell[R, 3R] \}$ such that the distance between the associated rectangles on which $A$ and $B$ are defined is at least~$R$, it holds that
\begin{equation}
\label{e:sd}
\mathbb{P}[ f  \in  A \cap B ] \le \mathbb{P}[ f + h(R) \in A] \mathbb{P}[f + h(R) \in B]  + e(R) .
\end{equation}
This property is monotonic in $(h,e)$, in the sense that if it holds for $(h,e)$ it also holds for any $(h', e')$ such that $h' \ge h$ and $e' \ge e$.

\smallskip If the sprinkled decoupling property holds for $h = 0$ and $e(R) \to 0$, and if the field $f$ is positively-associated so that $\mathbb{P}[ f  \in  A \cap B ] \ge \mathbb{P}[ f  \in A] \mathbb{P}[f \in B]$, then we recover the usual notion of \textit{asymptotic independence}
\[ | \mathbb{P}[ f  \in  A \cap B ] - \mathbb{P}[ f  \in A] \mathbb{P}[f \in B] | \le e(R) \to 0  .\]
We shall rather be interested in the case that $(h,e)$ can be chosen to decay polynomially, and we say that $f$ satisfies the \textit{polynomial sprinkled decoupling property} if there exist constants $c_h, c_{e, 1}, c_{e,2} > 0$ such that $f$ satisfies the sprinkled decoupling property with $h(R) = R^{-c_h}$ and $e(R) = c_{e,1}R^{-c_{e,2}}$. We say that a collection $(f_n)$ of random fields satisfies the \textit{uniform polynomial sprinkled decoupling property} if each $f_n$ satisfies the polynomial sprinkled decoupling property for constants $(c_h, c_{e,1}, c_{e,2})$ chosen uniformly over the collection.

\smallskip
The next proposition verifies that, under the conditions in Assumption \ref{a:2}, for any $\eta > 1/2$ the rescaled shot noise fields $( \lambda^{-\eta}  f_\lambda)_{\lambda \ge 1}$ satisfy this property (see Section \ref{sec:prf-poly-sprink} for the proof):

\begin{proposition}
\label{p:sd}
Assume that the kernel $g$ satisfies Assumption \ref{a:2} for some $\beta > 2$ and let $\eta > 1/2$. Then the collection of random fields $( \lambda^{-\eta}  f_\lambda)_{\lambda \ge 1}$ satisfies the uniform polynomial sprinkled decoupling property.
\end{proposition}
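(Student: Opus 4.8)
The plan is to prove the sprinkled decoupling estimate \eqref{e:sd} for $(\lambda^{-\eta} f_\lambda)_{\lambda \ge 1}$ by a two-step procedure: first establish sprinkled decoupling for the \emph{Gaussian limit} $\lambda^{-\eta} f$, and then transfer it to the shot noise fields $\lambda^{-\eta} f_\lambda$ via the strong invariance principle (Theorem~\ref{t:sip}), using the extra room afforded by $\eta > 1/2$. For the Gaussian step, the key input is the decay of the covariance kernel $K$: since $g$ satisfies Assumption~\ref{a:2} with $\beta > 2$, one checks that $K(x) = \int g(-y) g(x-y)\,dy$ and its derivatives up to order two decay like $|x|^{-(\beta - d)}$ (polynomially), uniformly in the rescaling by $\lambda^{-\eta}$ since $K$ does not depend on $\lambda$. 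Given polynomial decay of correlations, sprinkled decoupling for crossing events of smooth Gaussian fields is essentially standard: one conditions on the field in an annular region separating the two rectangles associated with $A$ and $B$, writes $f$ as the sum of a conditionally-independent-across-the-two-boxes field plus a correction term whose sup-norm over each box is small (governed by the covariance between a box and its complement at distance $R$), and absorbs that correction into the sprinkling parameter $h(R)$ together with a small-probability exceptional event contributing to $e(R)$. Concretely I would decompose $f = f_1 + f_2$ where $f_1, f_2$ are independent, $f_i$ matches $f$ on box $i$ up to an error term bounded in sup-norm by $R^{-c}$ off an event of probability $R^{-c'}$; the monotonicity of $\mathrm{Cross}_\ell$ then yields \eqref{e:sd} with polynomial $(h,e)$.

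For the transfer step, fix $\lambda \ge 1$ and use Theorem~\ref{t:sip}(1) to couple $f_\lambda$ and $f$ so that, on $B(cR)$ (a ball containing the two $R \times 3R$ rectangles, which lie within distance $O(R)$ of each other),
\[
\P\big[ \| f_\lambda - f \|_{\infty, B(cR)} \ge \sigma_d(\lambda) t \big] \le c_1 (cR)^d \lambda^{c_{d,\beta}} e^{-c_2 t}.
\]
Choosing $t \asymp \log(\lambda R)$ (plus a constant multiple of $c_h \log R$ to match the Gaussian sprinkling scale) makes the right-hand side polynomially small in $R$ while keeping $\sigma_d(\lambda) t \lesssim \lambda^{-1/2} (\log \lambda)^{3/2} \log R$. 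After rescaling by $\lambda^{-\eta}$, the coupling error in sup-norm is $\lesssim \lambda^{-\eta - 1/2}(\log \lambda)^{3/2}\log R$, which for $\eta > 1/2$ is $o(R^{-c_h'})$ uniformly in $\lambda \ge 1$ for a suitable $c_h' > 0$ — \emph{this is exactly where the hypothesis $\eta > 1/2$ is used}. On the (polynomially likely) event that the coupling error is below $R^{-c_h'}$, any $\mathrm{Cross}_\ell$ event for $\lambda^{-\eta} f_\lambda$ is sandwiched between the corresponding events for $\lambda^{-\eta} f \mp R^{-c_h'}$; combining this sandwiching on each box with the Gaussian sprinkled decoupling inequality gives \eqref{e:sd} for $\lambda^{-\eta} f_\lambda$ with sprinkling $h(R) = R^{-\min(c_h, c_h')}$ and an error $e(R)$ that is the sum of the Gaussian error and the polynomially-small coupling-failure probability. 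The uniformity of the constants over $\lambda \ge 1$ follows because all estimates are uniform in $\lambda$: the Gaussian decoupling constants do not involve $\lambda$ at all, and the coupling-error bound is uniform in $\lambda \ge 1$ precisely by the $\eta > 1/2$ margin.

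The main obstacle I anticipate is the bookkeeping in the transfer step: one must verify that a single exceptional event of polynomially-small probability simultaneously controls the coupling error on \emph{both} rectangles (so that the conditional independence structure of the Gaussian decomposition is not destroyed), and that the sprinkling budgets from the two sources (Gaussian correlation decay and coupling error) can be combined additively without the error terms $e(R)$ losing their polynomial decay. A secondary technical point is confirming that the Gaussian-side decoupling genuinely holds with \emph{polynomial} rather than merely decaying $(h,e)$ — this needs the polynomial (not just summable) tail in Assumption~\ref{a:2}, together with a Borell--TIS / Dudley-type bound to control $\|f - f_i\|_{\infty, \text{box}}$ in terms of the conditional covariance, whose operator norm is controlled by $\sup_{|x| \ge R} \sum_{|\alpha| \le 1}|\partial^\alpha K(x)| \lesssim R^{-(\beta-d)}$. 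Neither obstacle is conceptually deep, but getting the exponents to line up requires care. I would defer the detailed verification to Section~\ref{sec:prf-poly-sprink} as indicated.
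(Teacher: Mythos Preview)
Your transfer step contains a genuine gap. You claim that the coupling error for $\lambda^{-\eta} f_\lambda$ versus $\lambda^{-\eta} f$ on $B(cR)$, namely $\lambda^{-\eta}\sigma_d(\lambda)\, t \asymp \lambda^{-\eta-1/2}(\log\lambda)^{3/2}\log R$, is $o(R^{-c_h'})$ uniformly in $\lambda \ge 1$. This is false: take any fixed $\lambda$ (say $\lambda = 2$) and let $R \to \infty$. The quantity becomes a positive constant times $\log R$, which certainly does not decay polynomially in $R$. More conceptually, for a fixed $\lambda$ the fields $f_\lambda$ and $f$ are genuinely different stationary fields; the strong invariance principle says they can be coupled at scale $\sigma_d(\lambda)$ (times logs), but $\sigma_d(\lambda)$ is a constant in $R$ and there is no mechanism by which $\|f_\lambda - f\|_{\infty, B(cR)}$ should shrink as $R$ grows. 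Hence the sprinkling budget you need to pass from $\lambda^{-\eta} f_\lambda$ to $\lambda^{-\eta} f$ cannot be bounded by any $R^{-c_h'}$ uniformly over $\lambda \ge 1$, and the argument collapses precisely at the point where uniformity is required.

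The paper's proof avoids this by comparing $f_\lambda$ not to the Gaussian limit $f$ but to the \emph{truncated shot noise field} $f_\lambda^R = g^R \star \widetilde{\mathcal P_\lambda}$, where $g^R$ is $g$ cut off at radius $R/2$. This field is exactly $R$-range dependent, so for rectangles at distance $\ge R$ the events are genuinely independent and no Gaussian-side decoupling is needed at all. The approximation error $\|f_\lambda - f_\lambda^R\|_\infty$ is controlled by Proposition~\ref{p:snsup}: on the scale of $\lambda^{-\eta} f_\lambda$ one gets $\P[\|f_\lambda - f_\lambda^R\|_{\infty, R_A\cup R_B} \ge \lambda^\eta R^{d-\beta+\delta}] \le c_1 R^d \lambda\, e^{-c_2 \sqrt{R^\delta \lambda^{\eta - 1/2}}}$, and since $\eta > 1/2$ the exponent is at least $c_2 R^{\delta/2}$ uniformly in $\lambda \ge 1$, giving an error function that is uniformly polynomially small in $R$. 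The sprinkling function is then $h(R) = R^{d-\beta+\delta}$, which is polynomial by Assumption~\ref{a:2}. In short, the paper exploits that the truncation error decays in $R$ for every fixed $\lambda$, whereas the distance to the Gaussian limit does not.
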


\begin{remark}
As shown in the proof, the implicit constant $c_h$ in Proposition \ref{p:sd} can be chosen to be any $c_h < \beta - 2$, although we do not use this fact.
\end{remark}
\begin{remark}
While it may be the case that the fields $(f_\lambda)_{\lambda \ge 1}$ also satisfy the uniform polynomial sprinkled decoupling property, our proof only establishes it for the rescaled fields $(\lambda^{- \eta}  f_\lambda)_{\lambda \ge 1}$ if $\eta > 1/2$; see Remark \ref{r:upsd}.
\end{remark}

We can now state our quantitative continuity result for sequences of stationary planar random fields which converge to a limit  (see Section \ref{s:cont} for the proof):

 \begin{proposition}
 \label{p:cont}
Let $(f_n)_{n \ge 1}$ and $f$ be stationary planar regular random fields. Suppose there exists positive functions $s(n), u(n), R(n)$ and $w(R)$ such that:
\begin{itemize}
\item The collection of rescaled fields $(s(n) f_n)_{n \ge 1}$ satisfies the uniform polynomial sprinkled decoupling property with positive constants $(c_h, c_{e,1}, c_{e,2})$.
\item The random field $f$ has a near-critical window (at $\ell = 0$) of size $w(R)$.
\item $R(n) \to \infty$ as $n \to \infty$, and for every $n \ge 1$ there exists a coupling of $f_n$ and $f$ satisfying, as $n \to \infty$,
\begin{equation}
\label{e:un}
 \P[  \| f_n - f \|_{\infty, B(2R(n))}  \ge  u(n) ] \to  0.
 \end{equation}
\end{itemize}
 Then there exists a $c > 0$, depending only on $c_h$, such that
 \begin{equation}
 \label{e:cont}
 |\ell_c(f_n)| \le  3 \max\{u(n), w(R(n)), c \, s(n)^{-1} R(n)^{-{c_h} } \}
 \end{equation}
 for sufficiently large $n$.
 \end{proposition}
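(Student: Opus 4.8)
The plan is to bound $|\ell_c(f_n)|$ by separately establishing the two one-sided estimates: namely that $\{f_n \le \ell\}$ does \emph{not} percolate when $\ell = -3M$ (giving $\ell_c(f_n) \ge -3M$), and that $\{f_n \le \ell\}$ \emph{does} percolate when $\ell = 3M$ (giving $\ell_c(f_n) \le 3M$), where $M = \max\{u(n), w(R(n)), c\,s(n)^{-1}R(n)^{-c_h}\}$ and $n$ is large. By regularity and planar duality (the partition property defining regular fields), the non-percolation statement at level $-3M$ is equivalent to the statement that, with probability bounded away from $0$, the dual field $-f_n$ crosses arbitrarily large rectangles in the `hard' direction at level $+3M$; so by the $\pi/2$ symmetry that is implicit here, it suffices to control $\textrm{Cross}_{3M}[\rho R, R]$ and $\textrm{Cross}_{-3M}[\rho R, R]$ for $f_n$ as $R \to \infty$. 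The strategy to upgrade crossing probabilities at scale $R(n)$ into genuine (non-)percolation at all scales is a standard Russo--Seymour--Welsh / finite-size-criterion renormalisation, and this is exactly what the polynomial sprinkled decoupling property of $(s(n)f_n)_n$ is designed to feed.

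The key steps, in order, are as follows. First, transfer the near-critical window information from $f$ to $f_n$ through the coupling: on the event $\|f_n - f\|_{\infty, B(2R(n))} \le u(n)$ (which by \eqref{e:un} has probability tending to $1$), a crossing of $f$ at level $-w(R(n))$ forces a crossing of $f_n$ at level $-w(R(n)) + u(n) \le -M + 2M$... more precisely $\{f \le -w(R)\} \subseteq \{f_n \le -w(R)+u(n)\}$ on that event, and symmetrically $\{f_n \le w(R)+u(n)\}$-crossings are implied by $\{f\le w(R)\}$-crossings. Hence for $n$ large, using $w(R(n)) \le M$ and $u(n) \le M$, we get that $\P[f_n \in \textrm{Cross}_{2M}[\rho R(n), R(n)]]$ is close to $1$ and $\P[f_n \in \textrm{Cross}_{-2M}[\rho R(n), R(n)]]$ is close to $0$, for the relevant aspect ratios $\rho$ (both $\rho$ and $1/\rho$). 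Second, feed these crossing estimates at the single scale $R(n)$ into a renormalisation argument: using the sprinkled decoupling property \eqref{e:sd} for the rescaled field $s(n) f_n$ — noting that a sprinkle of size $h(R) = R^{-c_h}$ in the rescaled field corresponds to a sprinkle of size $s(n)^{-1} R^{-c_h}$ in $f_n$, which is absorbed into $M$ via the third term in the maximum for suitable $c$ — run the usual block/hierarchical argument that propagates a crossing probability close to $1$ at scale $R(n)$ to crossing probabilities close to $1$ at all larger scales $3^k R(n)$, with the sprinkling accumulating to a geometric sum bounded by $c' s(n)^{-1} R(n)^{-c_h}$ and the error terms $e(R) = c_{e,1}R^{-c_{e,2}}$ summing to something small. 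This yields $\P[f_n \in \textrm{Cross}_{3M}[\rho R, R]] \to 1$ and $\P[f_n \in \textrm{Cross}_{-3M}[\rho R, R]] \to 0$ as $R \to \infty$. Third, conclude: the first limit, together with a standard gluing of crossings of rectangles into an unbounded cluster (a one-arm / block-percolation argument in the same renormalised picture), gives $\P[\{f_n \le 3M\}\text{ percolates}] > 0$, hence $\ell_c(f_n) \le 3M$; the second limit, via duality/regularity, gives that $\{f_n \ge -3M\}$ contains an unbounded dual cluster obstructing percolation of $\{f_n \le -3M\}$, hence $\ell_c(f_n) \ge -3M$. Combining, $|\ell_c(f_n)| \le 3M$.

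The main obstacle I expect is the renormalisation bookkeeping in the second step: one must check that the sprinkling constant $c$ in \eqref{e:cont} can be taken to depend only on $c_h$ (not on $c_{e,1}, c_{e,2}$ or on $n$), which requires that the geometric accumulation of sprinkles $\sum_k (3^k R(n))^{-c_h} \le C(c_h) R(n)^{-c_h}$ is the \emph{only} place the sprinkle enters, while the decoupling errors $e(R)$ are merely required to be summable and small (so $c_{e,1}, c_{e,2}$ affect only how large $n$ must be, not the constant $c$). One also has to be a little careful that the RSW input needs crossings in \emph{both} aspect ratios $\rho$ and to iterate them — but the near-critical window hypothesis is stated for every $\rho > 0$, so this is available; and the $\pi/2$ symmetry needed to pass between `left-right' and `top-bottom' crossings is inherited by $f_n$ from the isotropy in the intended application, though as stated the proposition should carry this symmetry as part of `regular' or be invoked implicitly. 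A secondary subtlety is that decoupling is stated for $[R,3R]$-rectangles specifically, so the renormalisation scheme must be set up on a triadic grid with the $3R$ aspect ratio built in, which is the natural choice anyway.
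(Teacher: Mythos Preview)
Your proposal is correct and follows essentially the same approach as the paper. The paper packages the renormalisation step as a separate ``finite-size criterion'' (Proposition~\ref{p:fsc}): if $\min\{\P[\textrm{Cross}_\ell[3R,R]], \P[\textrm{Cross}^{tb}_\ell[R,3R]]\} \ge 1-\delta$ then $\ell_c \le \ell + cR^{-c_h}$ (and dually), with $c = \sum_{k \ge 0} 3^{-kc_h}$ coming exactly from the geometric accumulation of sprinkles you identify; Proposition~\ref{p:cont} is then a two-line application of this criterion to $s(n)f_n$ at scale $R(n)$ and level $s(n)(w(R(n))+u(n))$, after transferring crossing estimates from $f$ to $f_n$ via the coupling. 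Your concern about $\pi/2$ symmetry is well-placed: the paper uses both $\textrm{Cross}$ and $\textrm{Cross}^{tb}$ events in the criterion and tacitly relies on the near-critical window controlling both, which in the intended application follows from isotropy of $f$.
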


\begin{remark}
It is not essential that the uniform sprinkled decoupling satisfied by the fields $f_n$ is `polynomial' for a continuity result such as Proposition \ref{p:cont} (with suitable adjustments) to hold; what is required is that $h$ and $e$ are summable over a dyadic sequence, i.e.\
\[ \sum_{n \ge 1} h(2^n) < \infty \qquad \text{and} \qquad \sum_{n \ge 1} e(2^n) < \infty , \]
so for instance it would be enough if $h$ and $e$ decay at order $(\log R)^{-c}$ for some $c > 1$; see the proof of Proposition \ref{p:fsc} for details. We use polynomial sprinkled decoupling for convenience and since it already holds under our assumptions.
\end{remark}

\subsection{Proof of Theorem \ref{t:main}}
Fix $\eta > 1/2$, and recall that by Proposition \ref{p:sd} the collection of random fields $(\lambda^{-\eta} f_\lambda)_{\lambda \ge 1}$ satisfies the uniform polynomial sprinkled decoupling property with some constant $c_h > 0$. Moreover, one can check that Assumption \ref{a:1} guarantees that the Gaussian field $f$ satisfies the conditions of Theorem \ref{t:qualzero}, and hence $f$ has a near-critical window of size $w$ for arbitrary small constant $w(R) = w' > 0$. Define $R(\lambda) = \lambda^{c_3}$ and $u(\lambda) =  c_4  \lambda^{-1/2} (\log \lambda)^{3/2}$ for constants $c_3 > \eta/c_h$ and $c_4 > (c_{2,\beta} + 2 c_3)/c_2$, where $c_{2,\beta} > 0$ is defined in \eqref{e:sigma} and $c_2 > 0$ is the constant appearing in Theorem \ref{t:sip}. Then by the strong invariance principle in Theorem \ref{t:sip} (applied to $t  = c_4 \log \lambda)$ there exists a $c_1 > 0$ (independent of the choice of $c_3, c_4$) such that, for every $\lambda \ge 1$ sufficiently large there exists a coupling of $f_\lambda$ and $f$ satisfying
\begin{equation}
\label{e:pmain1}
\P \Big[  \| f_\lambda - f \|_{\infty, 2B(R(\lambda) )}  \ge u(\lambda)  \Big]  \le c_1 \lambda^{c_{2, \beta}} R(\lambda)^2 e^{-c_2 t} = c_1 \lambda^{c_{2 , \beta} + 2 c_3 } e^{-c_2 c_4 \log \lambda} .
\end{equation}
By the choice of $c_4 > 0$, the right-hand side of \eqref{e:pmain1} tends to zero as $\lambda \to \infty$. Hence the assumptions in Proposition~\ref{p:cont} are satisfied, and we deduce that
\[ |\ell_c(f_\lambda)| \le c_5 \max\{   \lambda^{-1/2} (\log \lambda)^{3/2}  ,  w' ,  \lambda^{\eta - c_h c_3  }\}   \]
for some $c_5 > 0$. Recalling that $c_3 > \eta/c_h$, by taking $\lambda \to \infty$ and then $w' \to 0$ we see that $\ell_c(f_\lambda) \to 0$, which is \eqref{e:main1}.

\smallskip
For the second statement \eqref{e:main2}, observe that Assumption \ref{a:1} and the extra condition that $g \ge 0$ implies that $f$ satisfies the assumptions of Theorem \ref{t:quantzero}, and hence $f$ has a near-critical window of size $w(R) = R^{-c_6}$ for some $c_6 > 0$. Redefine $R = R(\lambda) = \lambda^{c_7}$ and $u(\lambda) =  c_8  \lambda^{-1/2} (\log \lambda)^{3/2}$ for constants $c_7 > \max\{(1/2+\eta)/c_h, 1/(2c_6)\}$ and $c_8 > (c_{2,\beta} + 2 c_7)/c_2$. Then by the strong invariance principle in Theorem~\ref{t:sip}, for every $\lambda \ge 1$ sufficiently large there exists a coupling of $f_\lambda$ and $f$ satisfying
\begin{equation}
\label{e:pmain2}
\P \Big[  \| f_\lambda - f \|_{\infty, 2 B(R(\lambda))}  \ge u(\lambda)  \Big]  \le c_1 \lambda^{c_{2, \beta}} R(\lambda)^2 e^{-c_2 t} = c_1 \lambda^{c_{2, \beta} + 2 c_7 } e^{-c_2 c_8 \log \lambda} .
\end{equation}
Again by the choice of $c_4 > 0$ the right-hand side of \eqref{e:pmain2} tends to zero as $\lambda \to \infty$. Hence the assumptions in Proposition~\ref{p:cont} are satisfied, and we deduce that
\[| \ell_c(f_\lambda) | \le  c_9 \max\{  \lambda^{-1/2} (\log \lambda)^{3/2} , w( \lambda^{c_7})  ,  \lambda^{1/2  - c_h c_7 }    \} \]
for some $c_9 > 0$. Since $w(\lambda^{c_7}) = \lambda^{-c_6 c_7} = o( \lambda^{-1/2} ) $ and $\lambda^{\eta - c_h c_7 }  = o( \lambda^{-1/2} ) $  (recall that $c_7 > \max\{(1/2+\eta)/c_h, 1/(2 c_6)\}$), this implies that
\[  |\ell_c(f_\lambda)| = O( \lambda^{-1/2} (\log \lambda)^{3/2} ) . \]

\begin{remark}
\label{r:nosharp}
To deduce \eqref{e:main1} from Proposition \ref{p:cont} we used the existence of a sharp threshold in \eqref{e:ncw1}. On the other hand, it is perhaps more natural to work with the assumption $\ell_c(f) = 0$ rather than~\eqref{e:ncw1}. As we now explain, if we additionally assume that $f$ is positively-associated (equivalent to assuming $K(x) \ge 0$ \cite{pit82}, and hence implied by $g(x) \ge 0$) then in fact $\ell_c(f) = 0$ implies \eqref{e:ncw1}. Indeed let us consider the contrapositive statement, i.e., we choose an $\ell > 0$ such that
\begin{equation}
\label{e:nosharp}
 \liminf_{R \to \infty} \P[ \textrm{Cross}_{-\ell}[R,2R] ] >  0 ,
 \end{equation}
and aim to show that this implies $\ell_c(f) \ge \ell/2 > 0$. By applying Russo-Seymour-Welsh theory \cite{att18,kt20}, available since we assume stationarity, isotropy, and positive associations, we can deduce from \eqref{e:nosharp} that
\[\liminf_{R \to \infty} \P(\textrm{Ann}_{-\ell}[R,2R]) > 0 , \]
where $\textrm{Ann}_{-\ell}[R, 2R]$ is the event that $\{f \le -\ell\} \cap ( [-2R, 2R]^2 \setminus [-R, R]^2 )$ contains a circuit. Then an ergodic argument (see the `box lemma'  in \cite{gkr88}, and recall that we assume $K(x) \to 0$ as $|x| \to \infty$ which implies that $f$ is ergodic) shows that
\[   \P[ \text{every bounded set $A \subseteq \R^2$ is surrounded by a circuit in }  \{f \le -\ell\}  ] = 1 . \]
This implies that $\{f > -\ell\}$ almost surely does not have an unbounded component, and by symmetry in law of $f$ and $-f$, also that $\{f < \ell\}$ does not have an unbounded component. Hence $\ell_c(f) \ge \ell > 0$, as we aimed to show.
\end{remark}

\medskip
\section{Properties of shot noise and Gaussian fields}
\label{s:prelim}

In this section we collect some preliminary estimates on shot noise and Gaussian fields on~$\R^d$, in particular (i) we show how to couple these fields to approximations that have finite-range dependence, and (ii) we give bounds on their $C^1$-norm. We then use the finite-range approximation to establish the polynomial sprinkled decoupling property in Proposition \ref{p:sd}.

\smallskip
Throughout this section we assume for simplicity that the kernel $g$ satisfies~Assumption \ref{a:2} for some $\beta > d$, although some of the results hold under weaker assumptions. {  All of the constants defined in this section depend only on $\beta$ and $d$.}

\subsection{Convolution representations of shot noise and Gaussian fields}
\label{ss:conrep}
We begin by recalling a convenient representation of shot noise and Gaussian fields. Recall that $f_\lambda$ denotes the rescaled and centred shot noise field
\[  f_\lambda(x) = \frac{F_\lambda(x) - \E[ F_\lambda(0)] }{\lambda^{1/2}}  =  \frac{ \sum_{i \in \mathcal{P}_\lambda} g(x-i)  - \lambda \smallint g }{\lambda^{1/2}}  , \]
and that $f$ is the stationary centred Gaussian field $f$ with covariance
\[ K(x) = \mathbb{E}[f(0) f(x)] = \int g(-y) g(x-y) \, dy  . \]
Clearly we may represent $f_\lambda$ as
\[  f_\lambda(x) = (g \star \widetilde{\mathcal{P}_\lambda})(x) = \widetilde{\mathcal{P}_\lambda}( g(x-\cdot ) )  , \]
 where $\star$ denotes convolution, $\widetilde{\mathcal{P}_\lambda}$ is the normalised compensated Poisson measure
 \[  \widetilde{\mathcal{P}_\lambda} = \frac{\sum_{i \in \mathcal{P}_\lambda} \delta_{i}  - \lambda dx  }{\lambda^{1/2}} \ , \qquad (\widetilde{\mathcal{P}_\lambda}(g))_{g \in L^1(\R^d)} =  \Big(\frac{\sum_{i \in \mathcal{P}_\lambda} g(i)  -  \lambda \int g   }{\lambda^{1/2}} \Big)_{g \in L^1(\R^d)}   ,     \]
 $\delta_i$ denotes a Dirac mass at $i \in \R^d$, and $dx$ is the Lebesgue measure. Similarly, recall the `stationary moving average' representation of the Gaussian field $f$

\begin{align}
\label{eq:gauss-convol}
f(x) = (g \star \widetilde{\mathcal{W}})(x) =  \widetilde{\mathcal{W}}( g(x-\cdot) ) ,  
\end{align}
where $\widetilde{\mathcal{W}}$ denotes the white noise on $\R^d$, i.e.\ $(\widetilde{\mathcal{W}}(g))_{g \in L^2(\R^d)}$ is the centred Gaussian process such that $\E[ \widetilde{\mathcal{W}}(g_1) \widetilde{\mathcal{W}}(g_2) ] = \int g_1 g_2$. To justify this representation, observe that $g \star \widetilde{\mathcal{W}}$ is a stationary centred Gaussian field with covariance kernel
\[ K(x) = \E[ f(0) f(x) ] = \E \big[ \widetilde{\mathcal{W}}( g(0 - \cdot) ) \widetilde{\mathcal{W}}(g (x - \cdot)) \big] = \int g(-y) g(x-y) dy  , \]
and so has the same law as the Gaussian field $f$ defined in Section \ref{s:gaus}. 

\subsection{Finite-range approximation}
The representations $f_\lambda = g \star \widetilde{\mathcal{P}_\lambda}$ and $f = g \star \widetilde{\mathcal{W}}$ allow us to construct approximations $f^r_\lambda$ and $f^r$ of the fields $f_\lambda$ and $f$ respectively, that are (i) coupled on the same probability space, and (ii) $r$-range dependent, meaning that events supported on sets separated by a distance $r$ are independent. In fact we may construct these approximations simultaneously for all $r > 0$.

\smallskip
For this we introduce a smooth cut-off function $\chi: \R^d \to [0, 1]$ such that $\chi(x) = 1$ for  $x \in B(1/4)$ and $\chi(x) = 0$ for $x \notin B(1/2)$; the choice of $\chi$ is arbitrary but the unspecified constants in our results depend on it (including Theorem \ref{t:main}). Then for $r > 0$, define the truncated kernel $g^r(x) = g(x) \chi(x/r)$, and the fields
\[ f_\lambda^r(x) = (g^r \star \widetilde{\mathcal{P}_\lambda})(x)  =  \widetilde{\mathcal{P}_\lambda}( g^r(x-\cdot) )  = \frac{ \sum_{i \in \mathcal{P}_\lambda} g^r(x-i)  - \lambda \smallint g^r }{\lambda^{1/2}}     \]
and
\[ f^r(x) = (g^r \star \widetilde{\mathcal{W}})(x) =  \widetilde{\mathcal{W}}( g^r(x-\cdot) )  .\]
Since $g^r$ is supported on $B(r/2)$, the fields $f_\lambda^r$ and $f^r$ are $r$-range dependent, and they are naturally coupled to $f_\lambda$ and $f$, respectively, via $\widetilde{\mathcal{P}_\lambda}$ and $\widetilde{\mathcal{W}}$.

\smallskip
In the following two propositions we control the quality of these approximations. Recall that $\beta > d$ is the constant in Assumption \ref{a:2}. 

\begin{proposition}
\label{p:snsup}
There exist $c_1, c_2 > 0$ such that, for every $\lambda ,  r , u \ge 1$
\[ \P \big[ \|f_\lambda - f^r_\lambda \|_{[0,1]^d, \infty} \ge u \lambda^{1/2} r^{d-\beta}  \big] \le c_1 \lambda e^{- c_2 \sqrt{ u }} .\]
\end{proposition}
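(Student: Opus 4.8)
The plan is to bound the difference $f_\lambda - f^r_\lambda = (g - g^r) \star \widetilde{\mathcal{P}_\lambda}$ on the unit cube $[0,1]^d$ by combining (i) a pointwise concentration inequality for the compensated Poisson integral $\widetilde{\mathcal{P}_\lambda}((g-g^r)(x-\cdot))$ at a fixed $x$, with (ii) a chaining/discretisation step to pass from pointwise control to a supremum over $[0,1]^d$, using the $C^1$-regularity of $g$. Write $h^r = g - g^r = g(1 - \chi(\cdot/r))$; the decay assumption in Assumption \ref{a:2} gives $|h^r(x)| \lesssim (1+|x|)^{-\beta} \id_{|x| \ge r/4}$ and similarly $|\nabla h^r(x)| \lesssim (1+|x|)^{-\beta} \id_{|x|\ge r/4} + r^{-1}(1+|x|)^{-\beta}\id_{r/4 \le |x| \le r/2}$, so that $\|h^r\|_{L^1} \lesssim r^{d-\beta}$, $\|h^r\|_{L^2}^2 \lesssim r^{d-2\beta}$, and the same order-of-magnitude bounds hold for $\nabla h^r$ (here is where $\beta > d$ and the exponent $d-\beta$ enter, matching the scaling $\lambda^{1/2} r^{d-\beta}$ after multiplying by the $\lambda^{1/2}$ normalisation — note $\widetilde{\mathcal{P}_\lambda}(h^r(x-\cdot)) = \lambda^{-1/2}(\sum_i h^r(x-i) - \lambda \int h^r)$ so $f_\lambda - f^r_\lambda$ at the level of the raw sum has the stated size).

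For step (i), at a fixed $x$ the random variable $S_x := \sum_{i \in \mathcal{P}_\lambda} h^r(x-i) - \lambda \int h^r$ is a centred compensated Poisson integral of the function $y \mapsto h^r(x-y)$; its moment generating function is explicit, $\E[e^{\theta S_x}] = \exp(\lambda \int (e^{\theta h^r(x-y)} - 1 - \theta h^r(x-y))\,dy)$, and since $\|h^r\|_\infty \lesssim r^{-\beta} \le 1$ and $\|h^r\|_{L^1}, \|h^r\|_{L^2}$ are small, a standard Bennett/Bernstein-type estimate gives a bound of the form $\P[|S_x| \ge v] \le 2 \exp(-c \min\{ v^2/(\lambda \|h^r\|_{L^2}^2), v/\|h^r\|_\infty\})$. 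Because the target deviation $v \sim u \lambda r^{d-\beta}$ is, for $u, r \ge 1$ and in the relevant regime, large compared to the ``Gaussian variance scale'' $\lambda^{1/2}\|h^r\|_{L^2} \sim \lambda^{1/2} r^{d/2 - \beta}$, the Poissonian (heavy, $\sqrt{\cdot}$-type) tail is the binding one; this is precisely the source of the $e^{-c_2\sqrt{u}}$ decay rather than $e^{-c_2 u}$ — the $\sqrt{u}$ arises from solving $u^2 \lambda^2 r^{2(d-\beta)} \asymp v^2 \asymp (\lambda r^{d-\beta}) \cdot v$ in the Poisson-regime tail, giving an effective deviation exponent $\sqrt{u}$ after optimising $\theta$. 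I would do this calculation carefully since the exact form of the tail is what the proposition claims.

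For step (ii), cover $[0,1]^d$ by a grid of $N \asymp (\lambda)^{Cd}$ points with spacing $\delta \asymp \lambda^{-C}$ for a suitable constant $C$; apply the pointwise bound at each grid point with a union bound (this produces the polynomial-in-$\lambda$ prefactor $c_1 \lambda$ — one can absorb the grid count into it by choosing the exponent generously, or more carefully track it), and control the oscillation of $x \mapsto S_x$ between grid points by a crude Lipschitz estimate: $|\nabla_x S_x| \le \sum_i |\nabla h^r(x-i)| + \lambda \int |\nabla h^r|$, whose typical size is again $\lesssim \lambda r^{d-\beta}$ (possibly up to a log or polynomial factor absorbed into $u$ or the prefactor), so choosing $\delta$ polynomially small makes the oscillation term negligible compared to $u\lambda^{1/2}r^{d-\beta}$ with overwhelming probability. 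Since the fields here need only be handled on the unit cube (stationarity will later tile larger boxes), the metric-entropy cost is mild and a simple union bound suffices.

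The main obstacle I anticipate is step (i): getting the tail in exactly the claimed form $e^{-c_2\sqrt{u}}$ requires being attentive to which regime of the Poisson concentration inequality governs for all $\lambda, r, u \ge 1$ simultaneously, and checking that the ``sub-exponential'' branch (not the ``sub-Gaussian'' branch) is always the relevant one given the normalisations — equivalently, that $u\lambda r^{d-\beta}$ never drops into the Gaussian window. A secondary bookkeeping nuisance is ensuring the polynomial prefactor from the union bound over the grid can genuinely be absorbed into a single factor $c_1\lambda$ (rather than $c_1\lambda^{\text{large}}$); this may need the grid exponent to be chosen depending on $d,\beta$ and the statement's constants interpreted accordingly, or a slightly finer chaining argument, but it does not affect the substance.
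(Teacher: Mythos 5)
Your overall plan (pointwise concentration for the compensated Poisson integral, then pass to a supremum over $[0,1]^d$) is a reasonable from-scratch route, and it differs from the paper, which simply cites a sup-norm concentration inequality for shot noise fields from \cite[Proposition~A.2]{lm19}: the paper centres $\hat F^r_\lambda = \sum_i (g-g^r)(\cdot - i)$, rescales to unit intensity, replaces the kernel by the maximal function $j(x)=\sup_{|y|_\infty\le 1/2}|h(x+y)|$, and reads off the bound directly with deviation of the form $2v(\|j\|_{L^1}+\sqrt{2v}\|j\|_{L^2}+\tfrac v3\|j\|_\infty)$ at probability $e^{-cv}$. However, as it stands your proposal has two genuine gaps.

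First, your explanation of the $e^{-c\sqrt u}$ tail is incorrect, and the heuristic computation you wrote down does not produce $\sqrt u$. At the pointwise level, with $\|h^r\|_{L^2}^2\lesssim r^{d-2\beta}$, $\|h^r\|_\infty\lesssim r^{-\beta}$ and target deviation $v=u\lambda r^{d-\beta}$ (in the raw, un-normalised sum), both branches of your Bernstein bound give an exponent $\gtrsim u\lambda r^{d}\ge u$, so the one-point tail is at least $e^{-cu}$, not $e^{-c\sqrt u}$. The $\sqrt u$ in the proposition is in fact an artifact of a crude application of \cite[Proposition~A.2]{lm19}: the paper replaces all three norms of $j$ by the common (and for $\|j\|_\infty$, very lossy) bound $\lambda r^{d-\beta}$, whereupon the $\tfrac v3\|j\|_\infty$ term makes the deviation scale like $v^2\lambda r^{d-\beta}$, and setting $v\asymp\sqrt u$ gives the stated tail. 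Your approach, done correctly, would likely give a stronger tail than $e^{-c\sqrt u}$, which is of course fine for the statement, but the reasoning you supply for where $\sqrt u$ ``comes from'' is not a valid argument and should be discarded.

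Second, and more substantively, your oscillation/chaining step is circular as written. You propose to control $\sup_{x\in[0,1]^d}|\nabla_x S_x|$ by noting its ``typical size'' is $\lesssim\lambda r^{d-\beta}$, but $x\mapsto\sum_i|\nabla h^r(x-i)|$ is itself a (positive, uncompensated) shot noise field, and controlling its sup over $[0,1]^d$ with an exponential tail is exactly the same type of problem you set out to solve: a fixed-$x$ Poisson bound does not automatically control the sup. The standard resolution is the maximal-function trick: dominate $\sup_{x\in[0,1]^d}\sum_i|h^r(x-i)|$ by the single random variable $\sum_i j(i)$ where $j(z)=\sup_{|y|_\infty\le 1/2}|h^r(z+y)|$ (a positive Poisson integral, amenable to Bennett/Bernstein with $\|j\|_{L^1},\|j\|_{L^2},\|j\|_\infty$), and add the deterministic compensator $\lambda\int|h^r|$. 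This is precisely the content of the cited \cite[Proposition~A.2]{lm19}; if you want a self-contained proof you essentially have to reproduce it. Once you do, no $\lambda$-dependent grid is needed (the domain is $[0,1]^d$ and the maximal function absorbs the sup), which also fixes your third, smaller issue: a grid of $\asymp\lambda^{Cd}$ points produces a prefactor $\lambda^{Cd}$, not $c_1\lambda$, and this cannot be absorbed into the exponential in $u$ for fixed $u$. (The factor $\lambda$ in the proposition comes, after rescaling to unit intensity, from a union bound over the $\lambda$ unit cubes in $[0,\lambda^{1/d}]^d$, not from a fine grid.)
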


\begin{proof}
By the definition of $f_\lambda^r$ and $f_\lambda$ we have
\[ (f_\lambda^r - f_\lambda)(x) = ((g - g^r) \star \widetilde{\mathcal{P}_\lambda})(x) =  \widetilde{\mathcal{P}_\lambda}( (g-g^r)(x - \cdot) ) =  \frac{ \hat{F}^r_\lambda(x)  - \lambda \smallint (g^r - g)  }{ \lambda^{1/2} }   , \]
where $\hat{F}^r_\lambda(x) = \sum_{i \in \mathcal{P}_\lambda}  ( g-g^r)(x-i)$ is a shot noise field with kernel $g - g^r$. By the decay of $g$ in Assumption~\ref{a:2}, there is a $c_3 > 0$ such that, for $ r \ge 1$,
\[ |  \smallint (g^r - g) | \le c_3  r^{d-\beta}    , \]
and hence
\begin{equation}
\label{e:snsup}
  \P \big[ \|f_\lambda - f^r_\lambda \|_{[0,1]^d, \infty} \ge u \lambda^{1/2} r^{d-\beta}  \big] \le    \P \big[  \| \hat{F}^r_\lambda \|_{[0,1]^d, \infty} \ge (u  - c_3 )  \lambda r^{d-\beta} \big] .
  \end{equation}
To control this latter quantity we apply the concentration bounds for shot noise fields in \cite[Proposition A.2]{lm19}. Consider the rescaled field $ \hat{G}(\cdot) =  \hat{F}^r_\lambda(\cdot / \lambda^{1/d} )$ which is a unit-intensity shot noise field with kernel $h(\cdot) = (g - g^r)(\cdot / \lambda^{1/d})$, and define the auxiliary function on $\R^d$
\[  j(x) = \sup_{y \in [-1/2,1/2]^d }   |h(x+y)|  . \]
Applying \cite[Proposition A.2, (A.1)]{lm19} to  $\hat{G}$ (where $j$ corresponds to $\hat{h}$ using the notation in this proposition, and we may take $\beta = \infty$ in this proposition since we work with degenerate marks), there exist $c_4, c_5 > 0$ depending only on $d$ such that, for all $v \ge 1$,
\begin{align*}
   &   \P \Big[  \| \hat{F}^r_\lambda \|_{[0, 1]^d, \infty}  \ge  2  v \Big( \| j \|_{L^1} + \sqrt{2v} \| j \|_{L^2}  + \frac{v}{3}  \| j \|_\infty \Big)  \Big] \\
    & \qquad  =  \P \Big[  \| \hat{G} \|_{[0, \lambda^{1/d}]^d, \infty}  \ge  2  v \Big( \| j \|_{L^1} + \sqrt{2v} \| j \|_{L^2}  + \frac{v}{3}  \| j \|_\infty \Big)  \Big]   \\
   & \qquad \le   c_4 \lambda e^{-c_5 v} .
   \end{align*}
By the decay of $g$ in Assumption \ref{a:2}, there is a $c_6 > 0$ such that, for $r \ge 1$,
\[ \max \big\{ \| j \|_{L^1}  ,   \| j  \|_{L^2}  ,   \| j \|_\infty   \big\}  \le c_6 \lambda r^{d-\beta} , \]
so the bound reduces to
\[    \P \Big[  \| \hat{F}^r_\lambda \|_{[0, 1]^d, \infty}  \ge  c_7  v^2 \lambda  r^{d-\beta}   \Big]   \le   c_4 \lambda e^{-c_5  v}  \]
for some $c_7 > 0$. Assuming $u \ge c_3 + c_7 $ we may apply this bound to $v^2 = (u - c_3) / c_7  \ge 1$, and combining with \eqref{e:snsup} we deduce that
\[   \P \big[ \|f_\lambda - f^r_\lambda \|_{[0,1]^d, \infty} \ge u \lambda^{1/2} r^{d-\beta}  \big] \le  c_4 \lambda e^{-c_8\sqrt{u - c_3 } }  \]
for some $c_8 > 0$. This proves the result for sufficiently large $u$, and hence for all $u \ge 1$ after adjusting constants.
\end{proof}

\begin{proposition}
\label{p:gfsup}
There exist $c_1, c_2 > 0$ such that, for every $r , u \ge 1$
\[ \P \big[ \|f - f^r \|_{[0,1]^d, \infty} \ge u r^{d/2-\beta}  \big] \le c_1 e^{- c_2 u^2} .\]
\end{proposition}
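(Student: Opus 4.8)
The plan is to bound $\|f - f^r\|_{[0,1]^d,\infty}$ by exploiting the Gaussian structure of $f - f^r = (g - g^r)\star\widetilde{\mathcal{W}}$, which is a stationary centred Gaussian field. The key quantity is its variance: by the white noise covariance formula,
\[ \textrm{Var}\big[(f - f^r)(x)\big] = \|g - g^r\|_{L^2(\R^d)}^2 = \int_{\R^d} g(y)^2 \big(1 - \chi(y/r)\big)^2\, dy, \]
and since $1 - \chi(y/r)$ is supported on $\{|y| \ge r/4\}$ and bounded by $1$, the decay assumption $|g(y)| \le (1+|y|)^{-\beta}$ from Assumption \ref{a:2} (with $\beta > d$) gives $\|g - g^r\|_{L^2}^2 \le c \int_{|y| \ge r/4} (1+|y|)^{-2\beta}\,dy \le c' r^{d - 2\beta}$, i.e.\ the pointwise standard deviation is $O(r^{d/2 - \beta})$. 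This is exactly the scale appearing in the statement.

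The next step is to promote this pointwise variance bound to a sup-norm bound over the unit cube $[0,1]^d$ with Gaussian (i.e.\ $e^{-cu^2}$) tails. First I would record that $f - f^r$ is almost surely $C^1$-smooth under Assumption \ref{a:2} (this is already noted in the excerpt, as $K \in C^4$ and both fields are $C^1$), and that a similar computation bounds the variance of each partial derivative $\partial_i(f - f^r)$ by $\|\partial_i(g - g^r)\|_{L^2}^2$; using the Leibniz rule $\partial_i(g\chi(\cdot/r)) = (\partial_i g)\chi(\cdot/r) + r^{-1} g\, (\partial_i\chi)(\cdot/r)$ together with the decay bounds on $g$ and $\partial_i g$ and $r \ge 1$, one gets that this derivative variance is also $O(r^{d - 2\beta})$ — in fact no larger than the field variance up to a constant. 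Having uniform control of the variance of the field and its gradient over $[0,1]^d$, I would invoke a standard Gaussian concentration / Borell–TIS type estimate for the supremum of a smooth Gaussian field on a bounded set: either directly via the Borell–TIS inequality (controlling $\E\|f - f^r\|_{[0,1]^d,\infty}$ by a chaining bound in terms of the canonical metric, which is itself controlled by the gradient variance) and then Gaussian concentration around the mean, or via a Dudley-type bound followed by a union-bound over a fine grid using the gradient to interpolate. Either way one obtains constants $c_1, c_2 > 0$, depending only on $\beta$, $d$ and the fixed cut-off $\chi$, such that $\P[\|f - f^r\|_{[0,1]^d,\infty} \ge u\, r^{d/2 - \beta}] \le c_1 e^{-c_2 u^2}$ for all $r, u \ge 1$.

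The main obstacle is not any single hard estimate but bookkeeping: one must check that the canonical (pseudo-)metric $\rho(x,y)^2 = \textrm{Var}[(f-f^r)(x) - (f-f^r)(y)]$ on $[0,1]^d$ is controlled by $c\, r^{d-2\beta}\,|x-y|^2$ uniformly in $r \ge 1$ — i.e.\ that the gradient variance bound really is $O(r^{d-2\beta})$ and not merely $O(r^{-2})$ times something — so that the chaining integral over the unit cube contributes only a dimension-dependent constant multiple of $r^{d/2-\beta}$ and no extra $r$-dependence. This is where the $r \ge 1$ restriction and the $r^{-1}$ gain in the $(\partial_i\chi)(\cdot/r)$ term are used. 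Once the metric estimate is in hand the Gaussian concentration step is routine and yields the stated exponential-square tail. An alternative, perhaps cleaner route that avoids chaining altogether: discretise $[0,1]^d$ into $O(N^d)$ points at spacing $1/N$ with $N \sim r^{\beta - d/2}$ (so that the grid spacing times the gradient sup-norm tail is controlled), union-bound the Gaussian tail at each point, and bound the oscillation between grid points using a Gaussian tail bound for $\sup|\nabla(f - f^r)|$ of the same form; absorbing the polynomial factor $N^d = r^{d(\beta - d/2)}$ into the $e^{-c_2 u^2}$ by enlarging $u$ slightly (valid since $u \ge 1$) gives the result with adjusted constants.
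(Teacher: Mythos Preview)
Your main line of argument is correct and is the standard one: the variance of $(f-f^r)(x)$ equals $\|g-g^r\|_{L^2}^2 = O(r^{d-2\beta})$, the canonical metric on $[0,1]^d$ satisfies $\rho(x,y) \le \|\nabla(g-g^r)\|_{L^2}\,|x-y| = O(r^{d/2-\beta})|x-y|$ (your Leibniz computation is right, and the $r^{-1}(\partial_i\chi)(\cdot/r)$ term contributes only $O(r^{d/2-1-\beta})$), so Dudley's bound gives $\E\|f-f^r\|_{[0,1]^d,\infty} \le c\,r^{d/2-\beta}$ uniformly in $r\ge 1$, and Borell--TIS then yields the stated Gaussian tail. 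The paper's own proof is simply a citation to \cite[Proposition 3.11]{mv20} for $d=2$, with the remark that the extension to general $d$ is immediate; your argument is presumably what lies behind that reference.

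One correction to your ``alternative, perhaps cleaner route'': the step where you absorb the polynomial prefactor $N^d = r^{d(\beta-d/2)}$ into $e^{-c_2 u^2}$ does not work, since the constants $c_1,c_2$ in the statement must be independent of $r$, and for fixed $u$ the quantity $r^{d(\beta-d/2)} e^{-c_2 u^2}$ is unbounded as $r\to\infty$. The discretisation idea can be salvaged, but only by taking $N$ to be a \emph{constant} (independent of $r$) and controlling the oscillation via a BTIS/chaining bound on $\sup_{[0,1]^d}|\nabla(f-f^r)|$ of order $r^{d/2-\beta}$ (using the second-derivative decay in Assumption~\ref{a:2}); at that point, however, you have effectively reverted to your first approach. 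So the chaining step is not really avoidable here, and your first route is the right one.
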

\begin{proof}
The case $d=2$ is proven in \cite[Proposition 3.11]{mv20}, and the proof extends immediately to arbitrary dimension $d \ge 1$ (with obvious adjustments).
\end{proof}

\subsection{Derivative bounds}

For a domain $D \subset \R^d$, let $\| h \|_{C^1(D)} = \sup_{|\alpha| \le 1}  \| \partial^\alpha h \|_{ \infty, D } $. The following results bound the $C^1$-norm of the fields $f_\lambda$ and $f$:

\begin{proposition}
\label{p:sndb}
There exist $c_1, c_2 > 0$ such that, for $\lambda, u \ge 1$,
\[ \P \big[ \|f_\lambda \|_{C^1([0,1]^d)} \ge u \lambda^{1/2}  \big] \le c_1 \lambda e^{- c_2 \sqrt{u} } .\] 
\end{proposition}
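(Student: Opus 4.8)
The plan is to mirror the argument used for Proposition~\ref{p:snsup}, but applied directly to $f_\lambda$ (and its derivatives) rather than to a difference $f_\lambda - f^r_\lambda$. The key point is that by Assumption~\ref{a:2} the kernel $g$ and all its partial derivatives $\partial^\alpha g$ with $|\alpha| \le 1$ decay like $(1+|x|)^{-\beta}$ with $\beta > d$, so each $\partial^\alpha g$ lies in $L^1 \cap L^2 \cap L^\infty$. Since $f_\lambda = g \star \widetilde{\mathcal{P}_\lambda}$ and differentiation passes through the convolution, we have $\partial^\alpha f_\lambda(x) = \lambda^{-1/2}\big( \hat{F}^{(\alpha)}_\lambda(x) - \lambda \smallint \partial^\alpha g \big)$ for $|\alpha| = 1$, where $\hat{F}^{(\alpha)}_\lambda(x) = \sum_{i \in \mathcal{P}_\lambda} \partial^\alpha g(x - i)$ is a shot noise field with kernel $\partial^\alpha g$ (and for $\alpha = 0$ this is just $f_\lambda$ itself, up to the centring constant $\lambda \smallint g$).

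First I would fix a multi-index $\alpha$ with $|\alpha| \le 1$ and bound the centring term: by the decay assumption $|\smallint \partial^\alpha g| \le c$ for a constant $c$ depending only on $\beta, d$, so it suffices to control $\P[\|\hat{F}^{(\alpha)}_\lambda\|_{[0,1]^d,\infty} \ge c' u \lambda]$ for $u \ge 1$. Next I would apply the concentration bound \cite[Proposition A.2]{lm19} exactly as in the proof of Proposition~\ref{p:snsup}: rescale to the unit-intensity field $\hat G^{(\alpha)}(\cdot) = \hat{F}^{(\alpha)}_\lambda(\cdot/\lambda^{1/d})$ with kernel $h_\alpha = \partial^\alpha g(\cdot/\lambda^{1/d})$, introduce the maximal function $j_\alpha(x) = \sup_{y \in [-1/2,1/2]^d} |h_\alpha(x+y)|$, and use that $\|j_\alpha\|_{L^1}, \|j_\alpha\|_{L^2}, \|j_\alpha\|_\infty$ are all bounded by a constant depending only on $\beta, d$ (here the scaling works out cleanly because we do not need a decaying factor $r^{d-\beta}$ — unlike in Proposition~\ref{p:snsup} the full kernel $\partial^\alpha g$ already has the required integrability without truncation). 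This yields, for all $v \ge 1$,
\[ \P\Big[ \|\hat{F}^{(\alpha)}_\lambda\|_{[0,1]^d,\infty} \ge c_7 v^2 \lambda \Big] \le c_4 \lambda e^{-c_5 v} . \]
Setting $v^2 = u/c_7$ and combining with the centring bound gives $\P[\|\partial^\alpha f_\lambda\|_{\infty,[0,1]^d} \ge c_8 u \lambda^{1/2}] \le c_4 \lambda e^{-c_9 \sqrt u}$ for large $u$, hence for all $u \ge 1$ after adjusting constants.

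Finally, since $\|f_\lambda\|_{C^1([0,1]^d)} = \sup_{|\alpha|\le 1}\|\partial^\alpha f_\lambda\|_{\infty,[0,1]^d}$ is a maximum over the finitely many (namely $d+1$) multi-indices with $|\alpha| \le 1$, a union bound over these finitely many events, followed by a rescaling of $u$ to absorb the constant factor $d+1$ and the constants $c_8$, completes the proof. I do not anticipate a serious obstacle here: the argument is a routine adaptation of the proof of Proposition~\ref{p:snsup}, and the only mild care needed is checking that \cite[Proposition A.2]{lm19} applies uniformly to the kernels $\partial^\alpha g$, which follows from Assumption~\ref{a:2}. If anything, the point most worth stating carefully is that the concentration inequality is being invoked with the degenerate-mark convention (so one may take $\beta = \infty$ in the notation of that proposition), exactly as in Proposition~\ref{p:snsup}.
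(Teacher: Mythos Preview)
Your approach is essentially the same as the paper's: reduce to a concentration bound for each shot noise field $\partial^\alpha F_\lambda$ via the rescaling to unit intensity and \cite[Proposition A.2]{lm19}, then take a union bound over $|\alpha|\le 1$.

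There is one slip worth correcting. After rescaling, the kernel is $h_\alpha(\cdot)=\partial^\alpha g(\cdot/\lambda^{1/d})$, and the change of variable gives $\|j_\alpha\|_{L^1}\le c\lambda$ and $\|j_\alpha\|_{L^2}\le c\sqrt{\lambda}$ (only $\|j_\alpha\|_\infty$ is bounded by a constant); the paper records this as $\max\{\|j_\alpha\|_{L^1},\|j_\alpha\|_{L^2},\|j_\alpha\|_\infty\}\le c_5\lambda$. Your subsequent displayed bound $\P[\|\hat F^{(\alpha)}_\lambda\|_{[0,1]^d,\infty}\ge c_7 v^2 \lambda]\le c_4\lambda e^{-c_5 v}$ is nonetheless correct and in fact already reflects this $\lambda$ factor, so the misstatement does not affect the argument; just replace ``bounded by a constant'' with ``bounded by $c\lambda$'' and the proof goes through exactly as in the paper.
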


\begin{proof}
For each multi-index $\alpha$ such that $|\alpha| \le 1$, $\lambda^{1/2} (\partial^\alpha f_\lambda)$ is distributed as a shot noise field with kernel $\partial^\alpha g$ (since we may interchange derivative and summation by dominated convergence). Hence we can control the $C^1$-norm of $f_\lambda$ using the concentration bounds for shot noise fields in \cite[Proposition A.2]{lm19} as in the proof of Proposition \ref{p:snsup}. 

For $|\alpha| \le 1$, consider the unit-intensity shot noise field $\hat{G}_\alpha(\cdot) =  \lambda^{1/2} (\partial^\alpha f_\lambda)(\cdot / \lambda^{1/d})$ and define the auxiliary function on $\R^d$
\[  j_\alpha(x) = \sup_{y \in [-1/2,1/2]^d }   |\partial^\alpha g ((x+y)/\lambda^{1/d})|  . \]
According to \cite[Proposition A.2, (A.1)]{lm19} (again we may take $\beta = \infty$ in this proposition since we work with degenerate marks), there exist $c_3, c_4 > 0$ depending only on $d$ such that, for all $u \ge 1$,
\begin{align*}
&   \P \Big[  \lambda^{1/2} \|  \partial^\alpha f_\lambda \|_{[0, 1]^d, \infty}  \ge  2 u \Big( \| j_\alpha \|_{L^1} + \sqrt{2u} \| j_\alpha \|_{L^2}  + \frac{u}{3}  \| j_\alpha \|_\infty \Big)  \Big]   \\
& \qquad  =  \P \Big[  \| \hat{G}_\alpha \|_{[0, \lambda^{1/d} ]^d, \infty}  \ge  2 u \Big( \| j_\alpha \|_{L^1} + \sqrt{2u} \| j_\alpha \|_{L^2}  + \frac{u}{3}  \| j_\alpha \|_\infty \Big)  \Big]  \\
& \qquad \le   c_3 \lambda e^{-c_4 u} .
\end{align*}
By the decay of $g$ and its derivatives in Assumption \ref{a:2},
\[ \max \big\{ \| j_\alpha \|_{L^1}  ,   \| j_\alpha  \|_{L^2}  ,   \| j_\alpha \|_\infty   \big\} \le c_5 \lambda , \]
for some $c_5 > 0$, so the bound reduces to
\[    \P \Big[  \lambda^{1/2} \|   \partial^\alpha f_\lambda \|_{[0, 1]^d, \infty}  \ge  c_6 \lambda u^2    \Big]   \le   c_3 \lambda e^{-c_4 u} \]
for some $c_6 > 0$. We obtain the desired result by summing up the bound for all $|\alpha| \le 1$ and adjusting constants.
\end{proof}

\begin{proposition}
\label{p:gfdb}
There exist $c_1, c_2 > 0$ such that, for $u \ge 1$,
\[ \P \big[ \|f \|_{C^1([0,1]^d)} \ge u  \big] \le c_1 e^{- c_2 u^2} .\]
\end{proposition}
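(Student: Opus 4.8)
The plan is to reduce the statement to a standard Gaussian concentration argument for the supremum of a smooth Gaussian field over a compact set. Observe that the field $f$ and all its first-order partial derivatives $\partial^\alpha f$, $|\alpha| \le 1$, form a finite collection of stationary centred Gaussian fields: by Assumption \ref{a:2} the kernel $g$ is $C^2$ with enough decay, so $K \in C^4(\R^d)$, and $\partial^\alpha f = (\partial^\alpha g) \star \widetilde{\mathcal W}$ is itself a moving-average Gaussian field with kernel $\partial^\alpha g \in L^1 \cap L^2$ and covariance obtained by differentiating $K$ twice. It therefore suffices to prove a bound of the form $\P[\|\partial^\alpha f\|_{\infty, [0,1]^d} \ge u] \le c_1 e^{-c_2 u^2}$ for each fixed $\alpha$ and then union-bound over the (finitely many) multi-indices, adjusting constants.

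For a single field $X = \partial^\alpha f$, I would first establish that $X$ has an almost surely continuous (indeed $C^1$, by one more derivative of $g$, but continuity suffices) modification on $[0,1]^d$, which follows from the smoothness of the covariance together with a Kolmogorov-type criterion. Then, writing $M = \sup_{x \in [0,1]^d} X(x)$, the Borell--TIS inequality gives $\P[M \ge \E M + u] \le e^{-u^2/(2\sigma^2)}$ where $\sigma^2 = \sup_{x} \mathrm{Var}(X(x))$ is finite and independent of everything (it equals $(-1)^{|\alpha|}\partial^{2\alpha} K(0)$, bounded by $\|g\|_{L^2}$-type quantities). The mean $\E M$ is a finite constant (depending only on $g$, $\alpha$, $d$) that can be bounded by Dudley's entropy integral, using that the canonical metric $d(x,y)^2 = \E[(X(x)-X(y))^2]$ is Lipschitz-comparable to $|x-y|$ on the bounded set $[0,1]^d$ (again from $K \in C^4$). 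The same bound applied to $-X$ handles the lower tail of the infimum, so $\|X\|_{\infty,[0,1]^d} = \max\{M, \sup(-X)\}$ satisfies the claimed sub-Gaussian tail once $u$ is large enough to absorb the constant $\E M$, and hence for all $u \ge 1$ after enlarging $c_1$.

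I do not expect a genuine obstacle here: this is the Gaussian analogue of Proposition \ref{p:sndb}, and everything needed — finiteness of $\E M$, finiteness of the pointwise variance, and continuity of sample paths — is an immediate consequence of Assumption \ref{a:2} via the smoothness and integrability of $g$. The only mild care required is to make sure the constants $c_1, c_2$ depend only on $\beta$ and $d$ (as asserted for all results in this section), which is automatic since $\sigma^2$ and $\E M$ are controlled purely by the decay/smoothness constants in Assumption \ref{a:2}. Alternatively, and perhaps even more directly, one can bypass Borell--TIS entirely and mimic the proof of Proposition \ref{p:sndb}: discretise $[0,1]^d$ on a fine grid, control the Gaussian maximum over the grid by a union bound over marginals, and control the oscillation between grid points using the $C^1$-bound on the covariance together with a Gaussian tail estimate for the modulus of continuity. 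Either route yields the result; I would present the Borell--TIS version as it is shortest.
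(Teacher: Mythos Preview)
Your proposal is correct and is essentially the same as the paper's proof: the paper bounds $\E[\|f\|_{C^1([0,1]^d)}]$ by a constant (citing Kolmogorov's theorem rather than Dudley's entropy integral, but these serve the same purpose here) and then applies the Borell--TIS inequality. The only cosmetic difference is that you treat each $\partial^\alpha f$ separately and union-bound, whereas the paper applies BTIS to the whole $C^1$-norm at once; either way works.
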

\begin{proof}
{  By Kolmogorov's theorem (see \cite[Appendix~A]{ns16}), $\E[ \|f \|_{C^1([0,1]^d)} ]$ is bounded by a constant depending only on $\beta$ and $d$,} and the result then follows from the BTIS theorem \cite[Theorem 2.9]{aw09}.
\end{proof}

\subsection{Sprinkled decoupling for shot noise fields}
\label{sec:prf-poly-sprink}
In this section we prove the sprinkled decoupling property for shot noise fields stated in Proposition \ref{p:sd}. We shall make use of the following observation: if $A$ is an increasing event supported on $D \subset \R^d$, and $f,g$ are fields that are coupled on the same probability space, then for any $\varepsilon > 0$,
\begin{align}
\nonumber \P[ f \in A ] & = \P[ f \in A , \|f-g\|_{D, \infty} \le \varepsilon  ]  + \P[ f \in A, \|f-g\|_{D, \infty} > \varepsilon  ]   \\
\label{e:c} & \le \P[ g + \varepsilon \in A ] + \P[\|f-g\|_{D, \infty} \ge \varepsilon  ]   .
\end{align}

\begin{proof}[Proof of Proposition \ref{p:sd}]
Fix $\eta > 1/2$ and $\delta \in (0, \beta-d)$, and let $\lambda \ge 1$, $R \ge 1$, $\ell \in \R$, and $A, B$ be events which are translations or rotations of $\{ f \in \textrm{Cross}_\ell[R, 3R] \}$ such that the distance between their associated rectangles, which we denote by $R_A$ and $R_B$ respectively, is at least $R$. Since $A$ and $B$ are increasing, by \eqref{e:c} we have
\begin{align*}
 & \mathbb{P}[ \lambda^{-\eta} f_\lambda \in  A \cap B ]  \\
 &   \le \mathbb{P} \big[ \lambda^{-\eta} f^R_\lambda  + R^{d-\beta + \delta}  \in  A \cap B    \big]  +   \P \big[ \| f_\lambda - f^R_\lambda\|_{\infty, R_A \cup R_B } \ge \lambda^\eta  R^{d-\beta + \delta} \big]   \\
 &   \le \mathbb{P} \big[ \lambda^{-\eta} f^R_\lambda  + R^{d-\beta + \delta} \in  A \big] \mathbb{P} \big[ \lambda^{-\eta} f^R_\lambda + R^{d-\beta + \delta}  \in  B \big]   +   \P[ \| f_\lambda - f^R_\lambda\|_{\infty, R_A \cup R_B } \ge \lambda^\eta  R^{d-\beta+\delta} ]   ,
 \end{align*}
 where in the last step we used that $R_A$ and $R_B$ are separated by distance $\ge R$ and that the field $f_\lambda^R$ is $R$-range dependent. By Proposition \ref{p:snsup} (applied to $u = R^\delta  \lambda^{\eta-1/2}$) and the union bound we have
 \[  \P[ \| f_\lambda - f^R_\lambda\|_{\infty, R_A \cup R_B } \ge \lambda^\eta  R^{d-\beta + \delta} ]   \le c_1 R^d \lambda e^{-c_2 \sqrt{ R^\delta \lambda^{\eta-1/2} } }   \]
 for some $c_1, c_2 > 0$. Hence the field $\lambda^{-\eta} f_\lambda$ satisfies sprinkled decoupling with polynomial sprinkling function $h(R) = R^{d-\beta+\delta}$ and error function $e(R) =  c_1 R^d \lambda e^{-c_2 \sqrt{R^\delta \lambda^{\eta-1/2}  } } \le c_3 R^{-c_4} $ for some $c_3, c_4 > 0$ which are uniform in $\lambda \ge 1$.
\end{proof}

\begin{remark}
\label{r:upsd}
The reason that we restrict $\eta > 1/2$ in Proposition \ref{p:sd} is to ensure that the error function $e(R) =  c_1 R^d \lambda e^{-c_2 \sqrt{R^\delta \lambda^{\eta-1/2}  } }$ is uniformly decaying in $R$; indeed if $\eta \le 1/2$ then $e(R)$ is only small if 
\[R \gg \lambda^{(1/2-\eta)/\delta} (\log \lambda)^{2/ \delta} \to \infty .\]
\end{remark}

\medskip
\section{A strong invariance principle for shot noise fields}
\label{s:conv}

In this section we prove the strong invariance principle for shot noise fields in arbitrary dimension $d \ge 1$, stated in Theorem \ref{t:sip}. Recall the representations $f_\lambda(x)  = \widetilde{\mathcal{P}_\lambda}( g(x-\cdot ) )$ and $f(x)  = \widetilde{\mathcal{W}}( g(x-\cdot ) )$ from Section \ref{ss:conrep}. Our strategy is to first establish a coupling of $\widetilde{\mathcal{P}_\lambda}$ and $\widetilde{\mathcal{W}}$ such that $\widetilde{\mathcal{P}_\lambda}(g)$ and $\widetilde{\mathcal{W}}(g)$ are close with high probability for a single { compactly-supported} test function $g$ (see Proposition \ref{p:sip} below). This is achieved by applying the results of Koltchinskii \cite{kol94} on strong invariance principles for the empirical process, based on the dyadic coupling scheme of \cite{kmt75}. We then apply this to translations of the (truncated) kernel $g^r(x - \cdot)$ on a sufficiently fine mesh $x \in \varepsilon \Z^d$, and use continuity properties of the fields $f_\lambda$ and $f$ to deduce the result.

\subsection{Strong invariance principle for a single test function}
\label{s:stf}

Let us introduce the quantities which control the quality of the coupling. Let $I = [0, 1)^d$ denote the unit cube. A \textit{binary expansion} of $I$ is a sequence $\Delta = (\Delta_j)_{j \ge 0}$ of partitions of $I$ satisfying:
\begin{itemize}
\item For $j \ge 0$, $\Delta_j = (\Delta_{j,k})_{k = 0, \ldots , 2^j - 1}$ with each $\Delta_{j, k}$ a rectangle $[a_i,b_i) \times \ldots \times [a_d, b_d)$;
\item $\Delta_{0,0} = I$, and for all $j \ge 0$ and $k = 0, \ldots, 2^j - 1$,
\[ \Delta_{j, k} = \Delta_{j+1, 2k} \cup \Delta_{j+1, 2k+1}  \quad \text{and} \quad \Delta_{j+1, 2k} \cap \Delta_{j+1, 2k+1} = \emptyset ; \]
\item For $j \ge 0$ and $k = 0, \ldots, 2^j - 1$, $\textrm{Vol}(\Delta_{j, k}) = 2^{-j}$;
\item The Borel $\sigma$-algebra on $I$ is the completion with respect to the Lebesgue measure on $I$ of the $\sigma$-algebra generated by $\cup_{i \ge 0} \Delta_i$.
\end{itemize}
In other words, $\Delta$ is an iterative division of $I$ into rectangles of equal volume that generate the Borel $\sigma$-algebra. By sequential division along the $d$ coordinate directions, one can check that there exists a binary expansion of $I$ such that (i) $\textrm{diam}(\Delta_{j,k}) \le  \sqrt{d} 2^{-\lfloor j/d \rfloor} \le \sqrt{d}  2^{1- j/d }  $ for all $j \ge 0$, and (ii) $\Delta _{j,k}$ are homothetic to a finite class of rectangles, and we henceforth fix $\Delta$ to be one such expansion.

\smallskip
For a measurable function $h : I \to \R$ and a Borel subset $D \subset I$, define the $L^2$-modulus
\[ \omega^2(h ; D ) = \frac{1}{\textrm{Vol}(D)} \int_D (h-\bar h)^{2}  , \]
where $\bar h= \textrm{Vol}(D)^{-1}\int_D h. $  For $m \ge 0$, define the `level-$m$' $L^2$-modulus
\begin{equation}
\label{e:q}
 q_m(h) =  \Big(  \sum_{j = 0}^m \sum_{k=0}^{2^{j}-1} \omega^2(h; \Delta_{j,k} )  \Big)^{1/2} .
 \end{equation}
 This quantity is homogeneous in the sense that if $h'(\cdot) = a h(\cdot)$ for a constant $a > 0$ then $q_m(h') = a q_m(h)$.
 
 \smallskip
Although not necessary to state our result, for later use we observe that the asymptotic behaviour of $q_m(h)$ as $m \to \infty$ depends on the regularity of $h$ as well as the dimension $d \ge 1$. In particular, for $h$ differentiable almost everywhere, the Poincar\'e-Wirtinger inequality yields
\[ \omega^2(h ; \Delta_{j,k}) \le \frac{ c_d \text{\rm{diam}}(\Delta _{j,k})^{2} \|\nabla h\|_{L^{2}(\Delta _{j,k})}^{2} }{\textrm{Vol}(\Delta_{j,k}) }, \]
where $c_d > 0$ is a constant depending only on the dimension, and so
\begin{align}
\label{e:qbounds}
 \nonumber q_m(h)  & \le   c'_d  \Big(  \sum_{j = 0}^m   2^{(1-2/d)j} \sum_{k}\|\nabla h\|_{L^{2}(\Delta _{j,k})}^{2}     \Big)^{1/2} \\
 & \le    c'_d \|\nabla h\|_{L^{2}(I)} \times \begin{cases}   \sqrt{2} &  d = 1 , \\   \sqrt{m+1} & d = 2 ,  \\  \sqrt{2}  \times 2^{(1/2-1/d)m} & d \ge 3 , \end{cases}
 \end{align}
 where $c'_d = 2\sqrt{d c_d}$.
 
 \smallskip
Finally, for $i \in \Z^d$ define the translated cubes $C_i = i + I$, which collectively tile $\R^d$, and for a function $g:\R^d \to \R$ we define $g_i = g|_{C_i}$ to be the restriction of $g$ to the cube $C_i$, so that $g = \sum_{i \in \Z^d} g_i$. We further define $q_m(g_i)$ in the natural way (i.e.\ by identifying $C_i$ with $I$).

\begin{proposition}
\label{p:sip}
There exists $ c_1, c_2 > 0$ depending only on the dimension such that, for every $\lambda \ge 1$ there exists a coupling of $\widetilde{\mathcal{P}_\lambda}$ and $\widetilde{\mathcal{W}}$ satisfying, for every $r \ge 1$, measurable function $g$ that is supported on $B(r)$, and every $t \ge 1 $,
\[    \P \Big[  | \widetilde{\mathcal{P}_\lambda}(g) - \widetilde{\mathcal{W}}(g) | \ge t     \lambda^{-1/2}  \sum_{i \in \Z^d}  \big( q_{[ \log_2(2\lambda/t) ]^+ }(g_i)  + \|g_i\|_\infty \big)  \Big]  \le c_1  r^d \lambda e^{-c_2 t}  \]
where $q_m(h)$ is defined in \eqref{e:q} and $[ x ]^+ = \max\{0, \lfloor x \rfloor\}$.
\end{proposition}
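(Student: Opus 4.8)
The plan is to decompose the problem so that the one-dimensional KMT machinery of Koltchinskii \cite{kol94} can be applied, and then to reassemble the pieces. First I would reduce to a single cube: writing $g = \sum_{i \in \Z^d} g_i$ with $g_i = g|_{C_i}$, linearity of both $\widetilde{\mathcal{P}_\lambda}$ and $\widetilde{\mathcal{W}}$ gives $\widetilde{\mathcal{P}_\lambda}(g) - \widetilde{\mathcal{W}}(g) = \sum_i ( \widetilde{\mathcal{P}_\lambda}(g_i) - \widetilde{\mathcal{W}}(g_i) )$. Since $g$ is supported on $B(r)$, at most $c r^d$ of the $g_i$ are nonzero; so by a union bound it suffices to couple $\widetilde{\mathcal{P}_\lambda}|_{C_i}$ and $\widetilde{\mathcal{W}}|_{C_i}$ on each occupied cube so that, for each $i$, $| \widetilde{\mathcal{P}_\lambda}(g_i) - \widetilde{\mathcal{W}}(g_i) |$ exceeds $t \lambda^{-1/2}( q_{[\log_2(2\lambda/t)]^+}(g_i) + \|g_i\|_\infty )$ with probability at most $c_1 \lambda e^{-c_2 t}$, and to paste these couplings together independently across $i$ (which is legitimate because the Poisson process and the white noise both have independent restrictions to disjoint cubes). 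Then absorb the factor $c r^d$ into the stated $r^d$.

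Next, on a fixed cube which I identify with $I = [0,1)^d$, I would invoke Koltchinskii's strong invariance principle. The number of Poisson points in $C_i$ is $\mathrm{Poisson}(\lambda)$; conditionally on there being $n$ of them they are i.i.d.\ uniform on $I$, so $\widetilde{\mathcal{P}_\lambda}(g_i)$ is (a compensated, $\lambda^{-1/2}$-scaled version of) an empirical integral $\sum_{j=1}^n g_i(X_j)$, and $\widetilde{\mathcal{W}}(g_i)$ is the corresponding Gaussian integral against white noise. Koltchinskii's dyadic coupling — built on the binary expansion $\Delta$ fixed above — produces, for the empirical process indexed by the partition hierarchy, a Brownian sheet approximation whose error on a test function $h$ is controlled, at the $m$-th dyadic level, by the level-$m$ $L^2$-modulus $q_m(h)$ plus a term from the oscillation at the finest scale; the cutoff level $m \asymp \log_2(\lambda/t)$ is exactly the scale at which the dyadic increments become of order $1$ and one stops refining. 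The exponential tail $e^{-c_2 t}$ and the $\|g_i\|_\infty$ term (controlling the contribution of the single-point jumps below the cutoff scale, and the Poissonization fluctuation of $n$ around $\lambda$) come out of the standard KMT tail bounds. I would also need to handle the randomness of $n$: since $n$ concentrates around $\lambda$ with Gaussian-then-Poisson tails, the event $\{|n - \lambda| \ge t\sqrt\lambda\}$ is absorbed into the error, and on its complement one runs the coupling with $n$ points and compares the compensations, the discrepancy being $O(|n-\lambda|\,\lambda^{-1/2}\|g_i\|_\infty)$.

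The main obstacle, I expect, is matching the precise form of the bound — in particular producing the $\lambda$-dependent truncation level $[\log_2(2\lambda/t)]^+$ and the additive $\|g_i\|_\infty$ — to what Koltchinskii's theorem literally delivers, since his result is phrased for the empirical process of $n$ i.i.d.\ samples rather than for a Poisson measure, and the roles of $n$, $\lambda$, and $t$ must be tracked carefully through the dyadic scheme. A secondary technical point is verifying that the binary expansion $\Delta$ chosen earlier (with $\mathrm{diam}(\Delta_{j,k}) \le \sqrt d\, 2^{1-j/d}$ and finitely many homothety classes) satisfies the regularity hypotheses under which Koltchinskii's coupling is available, and that the couplings on distinct cubes can indeed be taken mutually independent and simultaneously valid for all $r \ge 1$ (the latter because truncating $g$ to $B(r)$ only changes which $g_i$ are nonzero, not the coupling of the underlying measures). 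Once these points are in place, the union bound over the $O(r^d)$ occupied cubes and the triangle inequality $\sum_i | \widetilde{\mathcal{P}_\lambda}(g_i) - \widetilde{\mathcal{W}}(g_i) | \le \sum_i t\lambda^{-1/2}(q_m(g_i) + \|g_i\|_\infty)$ finish the proof.
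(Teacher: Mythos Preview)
Your overall architecture matches the paper's: decompose over cubes, apply Koltchinskii on each cube via Poissonization, and take a union bound. The couplings on disjoint cubes can indeed be built independently, and the binary expansion fixed in the paper satisfies Koltchinskii's hypotheses, so those concerns are fine.

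However, there is a genuine gap in how you handle the ``compensation'' step. Koltchinskii's theorem couples the empirical process $\widetilde{\mathcal{E}_n}$ to a \emph{Brownian bridge} $\widetilde{\mathcal{W}}_0$, not to white noise; the missing piece is the mean, since $\widetilde{\mathcal{W}}(h) \stackrel{d}{=} \widetilde{\mathcal{W}}_0(h) + Z\int h$ for an independent standard Gaussian $Z$. On the Poisson side the analogous decomposition is $\widetilde{\mathcal{P}_\lambda}(h) = \lambda^{-1/2}\bigl(\sqrt{N}\,\widetilde{\mathcal{E}_N}(h) + (N-\lambda)\int h\bigr)$. The resulting cross term $(N-\lambda - \sqrt{\lambda}Z)\lambda^{-1/2}\int g_i$ is what you must control, and your treatment---restricting to $\{|N-\lambda|\le t\sqrt{\lambda}\}$ and bounding the discrepancy by $O(|N-\lambda|\lambda^{-1/2}\|g_i\|_\infty)$---only yields $O(t\,\|g_i\|_\infty)$ on that event, which is a factor $\sqrt{\lambda}$ too large for the target $t\lambda^{-1/2}\|g_i\|_\infty$.

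The fix, which the paper carries out, is to \emph{couple} $N$ and $Z$ (not merely bound $|N-\lambda|$): a Tusn\'ady-type univariate lemma gives a pairing with $\P[\,|N-\lambda-\sqrt{\lambda}Z|\ge t\,]\le c_1 e^{-c_2 t}$, so that $(N-\lambda-\sqrt{\lambda}Z)\lambda^{-1/2}\int g_i$ is genuinely $O(t\lambda^{-1/2}\|g_i\|_\infty)$ with the required tail. This same decomposition also produces a second correction term $(\sqrt{N}-\sqrt{\lambda})\,\widetilde{\mathcal{W}}_0(g_i)$, coming from the mismatch between the $\sqrt{N}$ in front of $\widetilde{\mathcal{E}_N}$ and the $\sqrt{\lambda}$ scaling; this is handled by $|\sqrt{N}-\sqrt{\lambda}|\le\sqrt{t}$ together with a Gaussian tail for $\widetilde{\mathcal{W}}_0(g_i)$. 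Once these two ingredients are in place, your outline goes through.
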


To prove Proposition \ref{p:sip} we rely on the results of Koltchinskii \cite{kol94} on strong invariance principles for the empirical process (see also \cite{rio94} for related results), which we now recall. For $n \ge 1$, the empirical process on $I$ (equipped with the Lebesgue measure) is the sequence of random measures
\begin{equation}
\label{e:ep}
\widetilde{\mathcal{E}_n} =  \frac{ \sum_{i = 1}^n \delta_{X_i} - n dx }{\sqrt{n}}   \ , \qquad  (\widetilde{\mathcal{E}_n}(h))_{h \in L^1(I)} = \Big( \frac{ \sum_{i=1}^n h(X_i) - n \int h}{\sqrt{n}} \Big)_{ h \in L^1(I)} ,
 \end{equation}
where $(X_i)_{i \ge 1}$ is an i.i.d.\ sequence of random variables distributed uniformly on $I$. Further let $\widetilde{\mathcal{W}}_0$ denote the `Brownian bridge process' on $I$, i.e.\ $(\widetilde{\mathcal{W}}_0)_{h \in L^2(I)}$ is the centred Gaussian process such that $\E[ \widetilde{\mathcal{W}}_0(h_1) \widetilde{\mathcal{W}}_0(h_2) ] =  \int h_1 h_2  - \int h_1 \int h_2$.

\begin{proposition}[Special case of {\cite[Theorem 3.5]{kol94}}]
\label{p:kol}
There exists $ c_1, c_2 > 0$ depending only on the dimension such that, for every $n \ge 1$ there exists a coupling of $\widetilde{\mathcal{E}_n}$ and $\widetilde{\mathcal{W}}_0$ satisfying, for every measurable $h : I \to \R$ such that $\|h\|_\infty \le 1$, and every $t \ge 1$,
\[    \P \Big[    | \widetilde{\mathcal{E}_n}(h) - \widetilde{\mathcal{W}}_0(h) | \ge   t n^{-1/2}  (q_{ [ \log_2 (n/t) ]^+  }(h)  + 1)  \Big]  \le c_1 n e^{-c_2 t}   . \]
\end{proposition}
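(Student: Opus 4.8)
The plan is to obtain Proposition \ref{p:kol} as a specialization of Koltchinskii's general strong approximation theorem \cite[Theorem 3.5]{kol94}, so the work reduces to (i) recalling that general statement, (ii) checking that the fixed binary expansion $\Delta$ and the modulus $q_m$ of \eqref{e:q} fit its hypotheses, and (iii) reading off the resulting bound in the degenerate case of a one-element function class $\mathcal{F} = \{h\}$.

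First I would recall what \cite[Theorem 3.5]{kol94} provides: for each $n$, a coupling of the general empirical process $\widetilde{\mathcal{E}_n}$ indexed by a uniformly bounded class $\mathcal{F}$ on a probability space with a Brownian bridge process $\widetilde{\mathcal{W}}_0$, such that the uniform deviation of $\widetilde{\mathcal{E}_n} - \widetilde{\mathcal{W}}_0$ over $\mathcal{F}$ obeys a bound assembled from a metric-entropy term for $\mathcal{F}$ and a dyadic $L^2$-modulus of the functions in $\mathcal{F}$, measured against a nested sequence of sufficiently regular partitions (a Haar expansion) generating the underlying $\sigma$-algebra. The coupling is built resolution by resolution via the KMT dyadic scheme: at each of the $\sim \log n$ levels one couples the (multinomial/hypergeometric) cell counts to Gaussians with the sharp KMT logarithmic error and exponential tails, and the scheme is run down to the scale at which cell probabilities fall to order $t/n$; below that scale the remaining contribution of each function is controlled by its sup norm. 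This is precisely the structure of the stated inequality, with truncation level $m = [\log_2(n/t)]^+$, modulus $q_m(h)$, additive $+1$ (from $\|h\|_\infty \le 1$ at the unresolved scales), tail $e^{-c_2 t}$, and polynomial prefactor $c_1 n$ (the total number of cells coupled across all levels, which is $\sum_{j \le m} 2^j = O(n/t) = O(n)$).

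Second I would make the correspondence precise: take the underlying space to be $I = [0,1)^d$ with Lebesgue measure, the sample to be $n$ i.i.d.\ uniform points, the nested partitions to be our fixed binary expansion $\Delta$ — which by construction generates the Borel $\sigma$-algebra, has cells of volume $2^{-j}$, and (crucially) has cells homothetic to a finite class of rectangles, which is exactly the regularity/bounded-eccentricity property needed to invoke the theorem — and $\mathcal{F} = \{h\}$ with $\|h\|_\infty \le 1$. For a singleton class the metric-entropy contribution is a universal constant, absorbed into $c_1$ and $c_2$, so the only surviving term is the dyadic modulus; after matching the normalizations of $\widetilde{\mathcal{E}_n}$, of $\widetilde{\mathcal{W}}_0$, and of the modulus itself, this term is exactly $t n^{-1/2}\big(q_m(h) + 1\big)$. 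The hypothesis $\|h\|_\infty \le 1$ together with the homogeneity of $q_m$ reduces the case of a general bounded $h$ to this normalized statement.

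The only genuinely delicate point is bookkeeping rather than probability: Koltchinskii phrases his dyadic modulus in a slightly different but equivalent form — e.g.\ via conditional $L^2$-oscillations of $h$ over the $\sigma$-algebras $\sigma(\Delta_0), \dots, \sigma(\Delta_m)$, or via $\ell^2$-sums of Haar coefficients — so I would verify that the level-$j$ term $\sum_k \omega^2(h; \Delta_{j,k})$ in \eqref{e:q} coincides, up to the fixed factors $\mathrm{Vol}(\Delta_{j,k}) = 2^{-j}$, with the quantity appearing in \cite[Theorem 3.5]{kol94}, and that the truncation index there equals $\log_2(n/t)$ up to a harmless additive constant (which only affects $c_1, c_2$). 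Once these identifications are confirmed, the proposition follows verbatim. I expect this notational matching — faithfully transcribing the hypotheses and conclusion of a somewhat technical external theorem and checking that the moduli agree — to be the main, if modest, obstacle.
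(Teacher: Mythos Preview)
Your proposal is correct and follows essentially the same route as the paper: both obtain the proposition by specializing \cite[Theorem 3.5]{kol94} to the cube $I$ with the fixed binary expansion $\Delta$. The paper's derivation is a bit simpler than you anticipate: Koltchinskii's theorem as cited is already stated for a single function $h$ (no class $\mathcal{F}$ or metric-entropy term appears), with two free parameters $x>0$ and $y\ge 1$; after checking that his modulus $\hat q_m$ coincides with $q_m$ when $P$ is Lebesgue measure on $I$ (so that $2^j P(\Delta_{j,k})=1$), one simply sets $x=y=t$ and absorbs the resulting $t + t\,(q_{[\log_2(n/t)]^+}(h)+1)$ and $e^{-c_2 t}+n e^{-c_2 t}$ into the constants --- and note that the homothety/finite-shape property of $\Delta$ is \emph{not} needed to invoke Koltchinskii's theorem, only later for the Poincar\'e--Wirtinger bound \eqref{e:qbounds}.
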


\begin{remark}
\cite[Theorem 3.5]{kol94} is stated in the general setting of a probability space $(X, \mathcal{A}, P)$ that admits a binary expansion, i.e.\ a sequence $\Delta = (\Delta_j)_{j \ge 0}$ of partitions of $X$ satisfying
\begin{itemize}
\item For $i \ge 0$, $\Delta_j = (\Delta_{j,k})_{k = 0, \ldots , 2^j - 1}$ with $\Delta_{j, k} \in \mathcal{A}$ for all $k$;
\item $\Delta_{0,0} = X$, and for all $j \ge 0$ and $k = 0, \ldots, 2^j - 1$,
\[ \Delta_{j, k} = \Delta_{j+1, 2k} \cup \Delta_{j+1, 2k+1}  \quad \text{and} \quad \Delta_{j+1, 2k} \cap \Delta_{j+1, 2k+1} = \emptyset ; \]
\item For $j \ge 0$ and $k = 0, \ldots, 2^j - 1$, $P(\Delta_{j, k}) = 2^{-j}$;
\item $\mathcal{A}$ is the completion with respect to $P$ of the $\sigma$-algebra generated by $\cup_{i \ge 0} \Delta_i$.\end{itemize}
Denote by $\widetilde{\mathcal{E}_n}$ the empirical process on $X$, defined by analogy with \eqref{e:ep}, and by $\widetilde{\mathcal{W}}_0$ the Brownian bridge process on $X$. Then \cite[Theorem 3.5]{kol94} states that there exists $c_1, c_2 > 0$ such that, for every $n \ge 1$ there exists a coupling of $\widetilde{\mathcal{E}_n}$ and $\widetilde{\mathcal{W}}_0$ satisfying, for every measurable $h : X \to \R$ such that $\|h\|_\infty \le 1$, and every $x > 0$ and $y \ge 1$,
\begin{equation}
\label{e:kol}
  \P \Big[   n^{1/2} | \widetilde{\mathcal{E}_n}(h) - \widetilde{\mathcal{W}}_0(h) | \ge  x + x^{1/2} y^{1/2} \big( \hat{q}_{ [ \log_2 (n/y) ]^+  }(h)  + 1 \big) \Big]  \le c_1 ( e^{-c_2 x} + n e^{-c_2 y} )   ,
  \end{equation}
where
\[ \hat{q}_m(g) =  \Big(  \sum_{j = 0}^m   2^j  \sum_{k = 0}^{2^j - 1}  \int_{\Delta_{j, k}} \Big|   h(u) - 2^j \int_{\Delta_{j, k}} h(v) P(dv) \Big|^2 P(du) \Big)^{1/2}  . \]
In the Euclidean setting $(X, \mathcal{A}, P) = (I, \mathcal{B}, dx)$, we have $\hat q_{m}(h)=q_{m}(h)$, hence we obtain Proposition \ref{p:kol} by setting $x = y = t$ in \eqref{e:kol} and adjusting the constants.
\end{remark}

\begin{remark}
The Euclidean setting was also considered in \cite{kol94}, however in deriving the bounds in \eqref{e:qbounds}, \cite{kol94} used the trivial inequality $ | h-\bar h | \leqslant \text{\rm{diam}}(\Delta )\|\nabla h\|_{\infty ,I}$ rather than the Poincar\'e-Wirtinger inequality, which led to a term $\|\nabla h\|_{I,\infty }$ in place of our $\|\nabla h\|_{L^{2}(I)}$.
\end{remark}

Let us make the connection between shot noise fields and empirical processes. Fix $\lambda \ge 1$, and let $ N=\mathcal{P}_{\lambda }(I)$, so that $N \sim \textrm{Pois}(\lambda)$. Then, for $h  \in L^1(I) $,
\[  \sum_{i \in \mathcal{P}_\lambda} h(i)  \stackrel{d}{=} \sum_{i=1}^N h(X_i)  \]
where $(X_i)_{i \ge 1}$ is an i.i.d.\ sequence of random variables independent of $\mathcal  P_{\lambda }$ distributed uniformly on $I$. Hence if we let $\widetilde{\mathcal{E}_N}$ be, conditionally on $N$, an independent copy of the $N^{\text{th}}$ coordinate of the empirical process on $I$, then
\[ (\widetilde{\mathcal{P}_\lambda}(h))_{h \in L^1(I)}  \stackrel{d}{=}  \Big(  \frac{ \sum_{i=1}^N h(X_i) - \lambda \int h}{\sqrt{\lambda}} \Big)_{h \in L^1(I)}= \Big( \frac{ \sqrt{N} \widetilde{\mathcal{E}_N}(h) + (N - \lambda) \int h  }{\sqrt{\lambda}}  \Big)_{h \in L^1(I)} . \]
Similarly, if we let $Z$ denote a standard Gaussian random variable independent of $ \widetilde{\mathcal{W}}_0$ (the Brownian bridge process on $I$), then
\[ (\widetilde{\mathcal{W}}(h))_{h  \in L^2(I)}   \stackrel{d}{=} \Big( \widetilde{\mathcal{W}}_0(h) +  Z \int h \Big)_{h  \in L^2(I)} . \]
Hence every coupling of $(N, \widetilde{\mathcal{E}_N}, Z, \widetilde{\mathcal{W}}_0)$ induces a coupling of $(\widetilde{\mathcal{P}}(h), \widetilde{\mathcal{W}}(h))_{h  \in L^1(I) \cap L^2(I)}$ such that
\begin{align}
\label{e:coup}
 & \sqrt{\lambda} \big( \widetilde{\mathcal{P}}(h) - \widetilde{\mathcal{W}}(h) \big) \\
 \nonumber & \qquad \qquad =  \sqrt{N} \big( \widetilde{\mathcal{E}_N}(h) - \widetilde{\mathcal{W}}_0(h) \big) + (\sqrt{N} - \sqrt{\lambda}) \widetilde{\mathcal{W}}_0(h) + (N - \lambda - \sqrt{\lambda} Z) \int h .
  \end{align}

\smallskip
To help analyse the expression in \eqref{e:coup} we use the following univariate coupling lemma for Poisson random variables, whose proof is deferred to the end of the section:

\begin{lemma}
\label{l:ucl}
There exists universal constants $c_1, c_2 > 0$ such that, for every $\lambda \ge 1$ there is a coupling of $N \sim \textrm{Pois}(\lambda)$ and a standard Gaussian $Z$ such that, for $t \ge 1$,
\[ \P[ | N - \lambda - \sqrt{\lambda} Z| \ge t ] \le  c_1 e^{-c_2 t} .\]
\end{lemma}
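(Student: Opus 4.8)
The goal is a coupling of $N \sim \mathrm{Pois}(\lambda)$ and a standard Gaussian $Z$ with exponential control on $|N - \lambda - \sqrt{\lambda}\, Z|$. The natural approach is the \emph{quantile coupling}: set $Z = \Phi^{-1}(F_\lambda(N))$ where $F_\lambda$ is the (suitably interpolated) c.d.f.\ of $\mathrm{Pois}(\lambda)$ and $\Phi$ is the standard normal c.d.f., so that $Z$ is genuinely standard Gaussian and $N$ is a deterministic monotone function of $Z$. One then must show $|N - \lambda - \sqrt{\lambda}\, Z| \le c\,(1 + |Z|)$ with the right constants, or more directly obtain the tail bound; this is exactly the content of the classical one-dimensional KMT/Tusn\'ady-type lemma for the Poisson distribution, which appears in essentially this form in the literature (e.g.\ \cite{kmt75}, and refined versions in \cite{bm06, rio96}). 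Since the paper already invokes the KMT machinery elsewhere, I would simply cite such a result; if a self-contained argument is wanted, I would proceed as follows.

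First, reduce to moderate deviations. For $t \gtrsim \sqrt{\lambda}$, the desired bound is trivial: by the standard Chernoff bound for Poisson random variables, $\P[|N - \lambda| \ge t/2] \le 2 e^{-c t^2/\lambda} \le 2 e^{-c' t}$ when $t \ge \lambda$, and similarly $\P[\sqrt{\lambda}|Z| \ge t/2] \le 2 e^{-c t^2/\lambda}$, so the union bound handles the regime $t \ge \lambda$ regardless of the coupling. Thus it suffices to construct a coupling that works for $1 \le t \le \lambda$, i.e.\ in the range where $N$ stays within $O(\lambda)$ of its mean and a local CLT is available. Second, in this central regime, use the quantile coupling together with a sharp local limit theorem for the Poisson law: writing $k = \lambda + s\sqrt{\lambda}$ with $|s| \le \sqrt{\lambda}$, a second-order Edgeworth/Stirling expansion gives $\P[N = k] = \frac{1}{\sqrt{2\pi\lambda}} e^{-s^2/2}\big(1 + O((1+|s|^3)/\sqrt{\lambda})\big)$, and integrating this (comparing the Poisson c.d.f.\ to $\Phi$) yields $|F_\lambda(k) - \Phi(s)| \le C(1+s^2)/\sqrt{\lambda}$ uniformly for $|s|\le\sqrt\lambda$. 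Feeding this through $\Phi^{-1}$ (whose derivative near a point $z$ is $\sim e^{z^2/2}$, controlled since $|s| \le \sqrt\lambda$) gives $|Z - s| \le C(1+|s|)/\sqrt{\lambda}$ on the event $\{|N - \lambda| \le \sqrt{\lambda}\cdot\sqrt\lambda\}$, hence $|N - \lambda - \sqrt{\lambda}\,Z| = \sqrt{\lambda}\,|s - Z| \le C(1 + |s|) = C(1 + |N-\lambda|/\sqrt\lambda)$. Combined with the Chernoff bound $\P[|N-\lambda| \ge x\sqrt\lambda] \le 2e^{-c\min\{x^2, x\sqrt\lambda\}}$, this converts into $\P[|N - \lambda - \sqrt\lambda\, Z| \ge t] \le c_1 e^{-c_2 t}$ for $t \le \lambda$, and patching with the trivial large-$t$ regime completes the proof after adjusting constants so that the bound holds for all $t \ge 1$ with constants uniform in $\lambda \ge 1$.

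The main obstacle is the uniformity of the constants in $\lambda$: one must ensure the local-CLT error estimate $|F_\lambda(k)-\Phi(s)| \le C(1+s^2)/\sqrt\lambda$ and the subsequent $\Phi^{-1}$-Lipschitz step hold with a single constant $C$ valid for all $\lambda \ge 1$ and all $|s| \le \sqrt\lambda$, rather than only in an asymptotic $\lambda \to \infty$ regime. For small $\lambda$ (say $\lambda \in [1, \lambda_0]$) the local expansion is crude, but there the total variation distance between $\mathrm{Pois}(\lambda)$ and any fixed reference is $O(1)$ and one can simply absorb everything into the constant $c_1$, since the statement is only required for $t \ge 1$; for large $\lambda$ the Edgeworth bound is genuinely what is needed. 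Carefully bookkeeping these two regimes — and the three error terms in \eqref{e:coup} that this lemma is ultimately used to bound — is the only delicate point; everything else is a routine application of Chernoff bounds and the quantile transform.
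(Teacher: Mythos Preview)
Your overall strategy---quantile coupling plus a split into a central regime (local CLT / Edgeworth) and a large-deviation regime (Chernoff)---is sound and does lead to the lemma, but the key inversion step as written has a gap. From the local expansion $\P[N=k]=\tfrac{1}{\sqrt{2\pi\lambda}}e^{-s^2/2}(1+O((1+|s|^3)/\sqrt\lambda))$, summing only gives the uniform Berry--Esseen bound $|F_\lambda(k)-\Phi(s)|\le C/\sqrt\lambda$, and pushing a uniform bound through $\Phi^{-1}$ incurs a factor $1/\phi(\xi)\sim e^{\xi^2/2}$, which destroys the estimate for $|s|$ of order $\sqrt\lambda$. What is actually needed is the \emph{non-uniform} bound $|F_\lambda(k)-\Phi(s)|\le C(1+s^2)\,\phi(s)/\sqrt\lambda$ (the $\phi(s)$ factor is the whole point); inversion then gives $|Z-s|\le C(1+s^2)/\sqrt\lambda$, i.e.\ $|N-\lambda-\sqrt\lambda\,Z|\le C(1+(N-\lambda)^2/\lambda)$, which is the correct Tusn\'ady shape (your stated $(1+|s|)$ is too strong). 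With this repair the rest of your argument goes through.

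The paper takes a different, more economical route: instead of deriving the quantile estimate for $\mathrm{Pois}(\lambda)$ from scratch, it writes $N\stackrel{d}{=}\sum_{i=1}^{\lfloor\lambda\rfloor}X_i+\lfloor\lambda\rfloor+Y$ with $X_i\stackrel{d}{=}\mathrm{Pois}(1)-1$ i.i.d.\ and $Y\sim\mathrm{Pois}(\lambda-\lfloor\lambda\rfloor)$ independent, and invokes the classical KMT/Tusn\'ady lemma for i.i.d.\ sums (a coupling with $|S_n-\sqrt{n}\,Z|\le c_1 S_n^2/n+c_2$ on $\{|S_n|<\varepsilon n\}$) as a black box. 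The remainder $Y$ and the correction $(\sqrt{\lfloor\lambda\rfloor}-\sqrt\lambda)Z$ are $O(1)$ and handled by elementary concentration; the regime $t\ge\varepsilon\lfloor\lambda\rfloor$ is dispatched by the same union bound you use. Your direct approach is self-contained but requires the delicate non-uniform Edgeworth analysis; the paper's decomposition sidesteps that entirely at the cost of citing \cite{kmt75}.
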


We shall also use the following standard Poisson and Gaussian concentration bounds
\begin{equation}
\label{e:cb}
 \P[ | N- \lambda |  \ge t ] \le  2 e^ { - \frac{t^2}{2(\lambda + t)} }  \quad \text{and} \quad  \P[ |Z|   \ge t ]  	\le 2 e^{ -t^2 / 2}
 \end{equation}
 valid for all $\lambda, t> 0$, which can be derived using a standard Chernoff bound
\[ \mathbb{P}[Y \ge t] \le e^{-at} \mathbb{E}[e^{aY}]  \, , \quad \text{for all } a, t > 0   \text{ and random variables } Y, \]
either setting $Y = \pm Z$ and $a = \pm t$ in the Gaussian case, or $Y=\pm (N-\lambda )$ and $a= \pm \log(1  \pm t / \lambda )$ in the Poisson case, using the fact that $(1+x)\log(1+x) - x \ge x^{2}/(2+2x/3))$ for all $x > -1$.

\smallskip
We are now ready to give the proof of Proposition \ref{p:sip}:

\begin{proof}[Proof of Proposition \ref{p:sip}]
 In the proof $c_1, c_2 > 0$ will denote positive constants depending only on the dimension which may change from line to line.
  
\smallskip
We begin by constructing the necessary coupling. Fix $\lambda \ge 1$, let $(Z^i)_{i \in \Z^d}$ be an i.i.d.\ sequence of standard Gaussians, and let $(\widetilde{\mathcal{W}}^i_0)_{i \in \Z^d}$ be a sequence of independent Brownian bridge processes on $i+I$. For each $i \in \Z^d$, let $N^i$ be a $\textrm{Pois}(\lambda)$-distributed random variable coupled to $Z^i$ so as to satisfy the conclusion of Lemma~\ref{l:ucl}, and conditionally on $N^i$, let $\widetilde{\mathcal{E}^i_{N^i}}$ denote the empirical process on $i + I$ coupled to $\widetilde{\mathcal{W}}^i_0$ so as to satisfy the conclusion of Proposition \ref{p:kol}. This defines a coupling of $(N^i, \widetilde{\mathcal{E}^i_{N^i}}, Z^i, \widetilde{\mathcal{W}}_0^i)_{i \in \Z^d}$, which, by \eqref{e:coup} and the decomposition $g = \sum_{i \in \Z^d} g_i$, induces for every $g  \in L^1(\R^d) \cap L^2(\R^d)$ a coupling of $\widetilde{\mathcal{P}_\lambda}(g) = \sum_{i \in \Z^d} \widetilde{\mathcal{P}_\lambda}(g_i)$ and $\widetilde{\mathcal{W}}(g) = \sum_{i \in \Z^d} \widetilde{\mathcal{W}}(g_i)$ such that, for every $i \in \Z^d$,
\begin{align}
\label{e:coup2}
& \nonumber   \sqrt{\lambda}  ( \widetilde{\mathcal{P}_\lambda}(g_i) - \widetilde{\mathcal{W}}(g_i) )   \\
& \qquad = \Big( \sqrt{N^i}( \widetilde{\mathcal{E}^i_{N^i}}(g_i) - \widetilde{\mathcal{W}}^i_0(g_i) ) + (\sqrt{N^i} - \sqrt{\lambda}) \widetilde{\mathcal{W}}^i_0(g) + (N^i - \lambda - \sqrt{\lambda} Z^i) \int g_{i} \Big) .
  \end{align}

\smallskip
Now let $t \ge 1$ be given and abbreviate $m' =  [  \log_2(2\lambda/ t) ]^+$. Then by the union bound and~\eqref{e:coup2}
\begin{align*}
& \P \Big[ \sqrt{\lambda} | \widetilde{\mathcal{P}_\lambda}(g) - \widetilde{\mathcal{W}}(g) |  \ge  3 t \sum_{i \in \Z^d}  (q_{m'}(g_i ) + \|g_i\|_\infty )  \Big]  \\
 &  \quad \le   \sum_{i  \in \Z^d: \|g_i\|_\infty  \neq 0 }  \P \Big[  \sqrt{\lambda} | \widetilde{\mathcal{P}_\lambda}(g_i) - \widetilde{\mathcal{W}}(g_i) | ] \ge 3 t ( q_{m'}(g_i)   + \|g_i\|_\infty )  \Big]    \\
 & \quad  \le |\{ i  \in \Z^d: (i + I) \cap B(r) \neq \emptyset \}|     \\
 & \qquad   \times \max_{i \in \Z^d : \|g_i\|_\infty \neq 0} \Big\{ \P \Big[  \big|  \sqrt{N^i}( \widetilde{\mathcal{E}^i_{N^i}}(g_i) - \widetilde{\mathcal{W}}^i_0(g_i) ) \big| \ge   t ( q_{m'}(g_i ) + \|g_i\|_\infty )  \Big] \\
 & \qquad \qquad   \qquad +  \P  \Big[ \big| (\sqrt{N^i} - \sqrt{\lambda}) \widetilde{\mathcal{W}}^i_0(g) \big| \ge t  \|g_i\|_\infty  ]  \Big]  +  \P\Big[ \big| ( N^i - \lambda - \sqrt{\lambda} Z^i )  \smallint g_i \big| \ge t  \|g_i\|_\infty  \Big]  \Big\}  \\
  & \quad  \le c_1 r^d     \times \max_{i \in \Z^d : \|g_i\|_\infty \neq 0} \Big\{  \sup_{n \le \max\{2\lambda, t\}  } \P \Big[  \sqrt{n} | \widetilde{\mathcal{E}^i_n}(g_i) - \widetilde{\mathcal{W}}_0^i(g_i)| \ge   t ( q_{m'}(g_i ) + \|g_i\|_\infty )  \, \big|  \, N^i = n \Big] \\
 & \qquad \qquad \qquad  \qquad + \P\big[ N^i > \max\{2 \lambda, t\} \big] +  \P  \Big[ \big| \big(\sqrt{N^i} - \sqrt{\lambda} \big)  \widetilde{\mathcal{W}}^i_0(g_i) \big| \ge t  \|g_i\|_\infty  ]  \Big]  \\
 & \qquad \qquad \qquad \qquad \qquad + \P\Big[ \big| N^i - \lambda - \sqrt{\lambda} Z^i \big|  \|g_i\|_{L^1}  \ge t  \|g_i\|_\infty  \Big]  \Big\} .
\end{align*}
We analyse the four summands in the above bound separately. First observe that, since $m \mapsto q_m(h)$ is non-decreasing,
\[  \sup_{n \le \max\{2\lambda, t\}  }  q_{ [ \log_2 (n/t) ]^+  }(h) \le q_{[  \log_2(2\lambda/ t) ]^+}(h) = q_{m'}(h)    . \]
Hence applying Proposition~\ref{p:kol} to the function $h_i = g_i / \|g_i\|_\infty$, for which
\[ q_{m'}(h_i) = q_{m'}(g_i) / \|g_i\|_\infty , \]
we have
\begin{align*}
 &   \sup_{n \le  \max\{ 2\lambda , t \} } \P \big[  \sqrt{n} | \widetilde{\mathcal{E}^i_n}(g_i) - \widetilde{\mathcal{W}}_0^i(g_i)| \ge   t  ( q_{ m'  }(g_i )  + \|g_i\|_\infty )   \, \big|  \, N^i = n \big] \\
   &  \qquad \le  \sup_{n \le  \max\{ 2\lambda , t \} } \P \big[  \sqrt{n} | \widetilde{\mathcal{E}^i_n}(h_i) - \widetilde{\mathcal{W}}_0^i(h_i)|  \ge   t  (  q_{ [ \log_2 (n/t) ]^+  }(h_i)  + 1 )   \, \big|  \, N^i = n \big]  \\
   &  \qquad  \le c_1 \max\{2\lambda, t\} e^{-c_2 t}  \le c_1 \lambda e^{-c_2 t} .
   \end{align*}
  Next we have
\begin{align*}
 \P  \Big[ \big| \big(\sqrt{N^i} - \sqrt{\lambda} \big)  \widetilde{\mathcal{W}}^i_0(g_i) \big| \ge t \|g_i\|_\infty  \Big]  & \le  \P \Big[  | \sqrt{N^i} - \sqrt{\lambda} | \ge \sqrt{  t} \Big] + \P \big[ |\widetilde{\mathcal{W}}^i_0(g_i) | \ge \sqrt{t}  \|g_i\|_\infty \big] \\
&  \le   \P[  | \sqrt{N} - \sqrt{ \lambda}| \ge \sqrt{  t} ] + \P[ |Z| \ge \sqrt{t} ]  ,
\end{align*}
where in the last inequality we used that $\E[ \widetilde{\mathcal{W}}_0^i(g_i) ^2] = \int g_i^2 - (\int g_i)^2 \le \|g_i\|^2_\infty$. Using the fact that, for $a, b, c > 0$,
\[  |\sqrt{a} - \sqrt{b}| \ge \sqrt{c} \quad  \Longrightarrow \quad |a-b| \ge \max\{ \sqrt{bc}, c \} , \]
and applying the concentration bounds in \eqref{e:cb} we have
\[    \P[   | \sqrt{N} -  \sqrt{ \lambda } | \ge \sqrt{ t} ]  + \P[ |Z| \ge \sqrt{t} ]   \le    \P[   |N  -   \lambda  | \ge \max\{ \sqrt{ \lambda t}, t \} ]  + \P[ |Z| \ge \sqrt{t} ]   \le c_1 e^{-c_2 t} . \]
Finally
\[ \P[ N^i \ge \max\{2 \lambda, t \}  ]  \le  \P[ |N - \lambda | \ge   \max\{ \lambda, t - \lambda \}  ]   \le  c_1 e^{- c_2 \max\{ \lambda, t\} } , \]
and
\[  \P\Big[ \big|N^i - \lambda - \sqrt{\lambda} Z^i \big|  \|g_i\|_{L^1}  \ge \|g_i\|_\infty t \Big]   \le  \P \Big[ \big| N^i - \lambda - \sqrt{\lambda} Z^i \big|  \ge  t  \Big]  \le  c_1 e^{-c_2  t } ,\]
where we used the concentration bounds in \eqref{e:cb} and Lemma \ref{l:ucl}. Combining the above we deduce that
\[  \P \Big[ \lambda^{1/2} | \widetilde{\mathcal{P}_\lambda}(g) - \widetilde{\mathcal{W}}(g) |  \ge  3 t \sum_{i \in \Z^d}  ( q_{ m'  } (g_i )  + \|g_i\|_\infty )  \Big]  \le  c_1 r^d  \lambda e^{-c_2 t}  , \]
and we obtain the result by adjusting constants.

\end{proof}

\subsection{Strong invariance principle for shot noise fields; proof of Theorem \ref{t:sip}}
We are now ready to prove Theorem~\ref{t:sip}. Let us first prove (1). In the proof $c_1, c_2 > 0$ will denote positive constants depending only on $\beta$ and $d$ which may change from line to line. Let $\lambda \ge 1$ be given and recall the coupling between $\widetilde{\mathcal{P}_\lambda}$ and $\widetilde{\mathcal{W}}$ in Proposition~\ref{p:sip}; we shall use the same coupling (which is in fact independent of the kernel $g$). Recall also the definition of $\sigma_d(\lambda)$ in \eqref{e:sigma}. Fix $R, t \ge 1$ and parameters $r, \varepsilon > 0$ to be chosen later. Then by first using that
\[  \| f_\lambda - f \|_{\infty, B(R)} \le \| f_\lambda - f\|_{\infty, \varepsilon \Z^d \cap B(R) }  + \sqrt{d} \varepsilon \| f_\lambda \|_{C^1(B(R))} +  \sqrt{d} \varepsilon \| f \|_{C^1(B(R))} , \]
and then approximating $f_\lambda$ and $f$ by the $r$-dependent fields $f^r_\lambda$ and $f^r$ respectively, we have by the union bound,
\begin{align}
  \nonumber &\P \Big[  \| f_\lambda - f \|_{\infty, B(R)}  \ge (3 +  2\sqrt{d}) t \sigma_d(\lambda)  \Big]   \\
 \nonumber & \qquad \le   \P \Big[  \| f^r_\lambda - f^r \|_{\infty, \varepsilon \Z^d \cap B(R)}  \ge  t \sigma_d(\lambda)  \Big]  \\
\nonumber  & \qquad \quad +   \P \Big[  \| f_\lambda - f^r_\lambda \|_{\infty, B(R)}  \ge t  \sigma_d(\lambda)  \Big]  +   \P \Big[  \| f - f^r \|_{\infty, B(R)}  \ge t \sigma_d(\lambda) \Big]   \\
\nonumber  & \qquad \quad \quad +   \P \Big[  \| f_\lambda \|_{C^1(B(R))}  \ge   t  \varepsilon^{-1} \sigma_d(\lambda)  \Big]  +   \P \Big[  \|f \|_{C^1(B(R))}  \ge  t \varepsilon^{-1} \sigma_d(\lambda)  \Big]  \\
\label{e:psip0} &  \qquad \quad =: E_1 + E_2 + E_3 + E_4 + E_5 .
  \end{align}
  We next substitute parameters
  \begin{equation}
  \label{e:epsr}
   \varepsilon = \min\{t^{-1}, \lambda^{-1/2} \}   \qquad \text{and} \qquad r = (t \lambda)^{1/(\beta-d)}
   \end{equation}
  and control each of the summands in the right-hand side of \eqref{e:psip0} in turn (see Remark \ref{r:error} for some comments on this choice).

  \smallskip
For $i \in \Z^d$, recall the translated cube $C_i = i + I$, and define $g_i = g^r|_{C_i}$. Recall the definition of $q_m(h)$ in \eqref{e:q}. Since $g \in C^1(\R^d)$, using the bounds in \eqref{e:qbounds} we have, for any $i \in \Z^d$, 
\[ q_m(g_i) \le     c_1 \|\nabla g_i\|_{L^{2}(i + I)}  \times \begin{cases}   1 &  d = 1 , \\   \sqrt{m+1} & d = 2 ,  \\   2^{(1/2-1/d)m} & d \ge 3  . \end{cases}   \]
By the decay of $g$ in Assumption \ref{a:2}, we have the following bounds (uniformly over $r \ge 1$),
\[   \sum_{i \in \Z^d} \|\nabla g_i\|_{L^{2}(i + I)} =\|\nabla g\|_{L^{2}(\mathbb{R}^{d})}  \le c_1  \quad \text{and} \quad  \sum_{i \in \Z^d} \|g_i\|_{\infty} \le c_1  \]
and so, considering the definition of $\sigma_d(\lambda)$ in \eqref{e:sigma}, we obtain
 \begin{align*}  \lambda^{-1/2}  \sum_{i \in \Z^d}  \big( q_{[ \log_2(2\lambda/t) ]^+ }(g_i)  + \|g_i\|_\infty \big) & \le   c_1  \lambda^{-1/2}  \begin{cases}   1 &  d = 1 , \\   \sqrt{ \log \lambda} & d = 2,   \\  \lambda^{1/2-1/d}  & d \ge 3  , \end{cases} \\
 &  = c_1  \sigma_d(\lambda)  .
  \end{align*}
Applying Proposition \ref{p:sip} to the functions $g^r(\cdot - i)$ for $i \in \varepsilon \Z^d \cap B(R)$, by the union bound we obtain
  \begin{equation}
  \label{e:psip1}
    E_1 = \P \Big[  \| f^r_\lambda - f^r \|_{\infty, \varepsilon \Z^d \cap B(R)}  \ge  t \sigma_d(\lambda)  \Big]    \le c_1 \varepsilon^{-d} R^d r^d \lambda e^{-c_2 t } .
     \end{equation}

 \smallskip
We move on to estimating the remaining terms $E_i$, $i=2,3,4,5$. Recalling the definition of $r$ in \eqref{e:epsr} we observe that
\[ t \sigma_d(\lambda) \ge t \lambda^{-1/2} =  t^2 \lambda^{1/2}  r^{d-\beta} . \]
 Hence applying Proposition \ref{p:snsup} (to $u = t^2$) and the union bound
 \[ E_2 = \P \Big[  \| f_\lambda - f^r_\lambda \|_{\infty, B(R)}  \ge t  \sigma_d(\lambda)  \Big]  \le \P \Big[  \| f_\lambda - f^r_\lambda \|_{\infty, B(R)}  \ge t^2 \lambda^{1/2}  r^{d-\beta }   \Big]   \le c_1 R^d  \lambda e^{- c_2   t } . \]
Similarly
 \[ t \sigma_d(\lambda) \ge t \lambda^{-1/2}  =    t^2 \lambda^{1/2}  r^{d-\beta}  \ge   \sqrt{t} r^{d/2-\beta}  , \]
 and hence applying Proposition \ref{p:gfsup} (to $u = \sqrt{t} $) and the union bound
    \begin{equation}
  \label{e:psip2}
  E_3 = \P \Big[  \| f - f^r \|_{\infty, B(R)}  \ge t \sigma_d(\lambda) \Big]  \le \P \Big[  \| f - f^r \|_{\infty, B(R)}  \ge \sqrt{t} r^{d/2-\beta}  \Big]   \le  c_1 R^d e^{-c_2 t } .
  \end{equation}

  \smallskip
 Finally recalling the definition of $\varepsilon$ in \eqref{e:epsr} we have
 \[   t  \varepsilon^{-1} \sigma_d(\lambda)  \ge t \varepsilon^{-1} \lambda^{-1/2}  \ge \max\{t^2 \lambda^{-1/2}  , \sqrt{t} \}    \]
  and so by Propositions \ref{p:sndb} and \ref{p:gfdb} and the union bound
   \begin{equation}
  \label{e:psip3}
 E_4 =   \P \Big[  \| f_\lambda \|_{C^1(B(R))}  \ge   t  \varepsilon^{-1} \sigma_d(\lambda)  \Big]   \le  \P \Big[  \| f_\lambda \|_{C^1(B(R))}  \ge   t^2 \lambda^{-1/2}    \Big]   \le c_1 R^d \lambda e^{- c_2 t  }
    \end{equation}
 and
     \begin{equation}
  \label{e:psip4}
  E_5 = \P \Big[  \| f \|_{C^1(B(R))}  \ge   t  \varepsilon^{-1} \sigma_d(\lambda)  \Big]   \le  \P \Big[  \| f \|_{C^1(B(R))}  \ge   \sqrt{t}   \Big]   \le c_1 R^d e^{- c_2 t } .
   \end{equation}

   \smallskip
   Putting \eqref{e:psip1}--\eqref{e:psip4} into \eqref{e:psip0} we have, for sufficiently large $t$,
   \[ \P \Big[  \| f_\lambda - f \|_{\infty, B(R)}  \ge (3 + 2 \sqrt{d}) t \sigma_d(\lambda)  \Big]   \le  \sum_{i = 1}^5 E_i \le c_1 \varepsilon^{-d} R^d r^d \lambda e^{-c_2 t } . \]
Recalling the definition of the parameters $\varepsilon$ and $r$ in \eqref{e:epsr}, and also the definition of $c_{d, \beta}$ in \eqref{e:sigma}, the right-hand side of the above display equals
     \[   c_1 R^d t^d \max\{ t^d,  \lambda^{d/2} \}  ( \lambda t)^{ d/(\beta-d)}  \lambda e^{-c_2 t } \le  c_1 R^d \lambda^{c_{d, \beta}}  t^{d+ d/(\beta-d)}  e^{-c_2 t }   , \] 
  which implies the result by adjusting constants.
  
 \smallskip 
The proof of (2) is a straightforward extension. The key observations are that (i) by the convolution representations in Section \ref{ss:conrep} (the exchange of integration and differentiation is justified by the dominated convergence theorem, in light of the decay in Assumption \ref{a:2})
\[ \partial^{\alpha }f_{\lambda } = ( \partial^\alpha g) \star \widetilde{P_\lambda} \quad \text{and} \quad \partial^\alpha f = (\partial^\alpha g) \star \widetilde{W} ,\]
and (ii) the coupling in the proof of (1) is chosen independently of the kernel $g$. Hence by applying (1) to $\partial ^{\alpha }g$ instead of $g$ for each $ \alpha \in I$, the result follows by the union bound.

\begin{remark}
\label{r:error}
{ 
The main contribution to the right-hand side of \eqref{e:psip0} is from $E_1$, and we make the choice of parameters in \eqref{e:epsr} to balance $E_1$ with the contribution from the other summands $E_i$, $i \ge 2$, controlling the truncation and discretisation errors, uniformly over $\lambda$ and $t$. Note however that this choice is not optimal for all $\lambda$ and $t$ simultaneously, and the bounds on $E_i$, $i \ge 2$, could be sharpened in some cases.}
   \end{remark}

\subsection{Proof of the univariate coupling lemma for Poisson random variables}

In this section we obtain Lemma \ref{l:ucl} by applying the following `Tusn\'{a}dy-type' lemma on coupling a simple random walk to Brownian motion:

\begin{lemma}[{\cite[Lemma 1 and Section 6]{kmt75}}]
\label{l:kmtucl}
Let $(X_i)_{i \ge 1}$ be a sequence of i.i.d.\ random variables with zero mean, unit variance, and a finite moment generating function in a neighbourhood of the origin. Then there are constants $\varepsilon, c_1, c_2 > 0$ such that, for every $n \ge 1$, there is a coupling of $S_n = \sum_{i=1}^n X_i$ and a standard Gaussian $Z$ satisfying
\[    |S_n - \sqrt{n} Z | \le \frac{c_1 S_n^2 }{n} + c_2 \quad   \text{if} \quad |S_n| < \varepsilon n .     \]
\end{lemma}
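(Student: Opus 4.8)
The plan is to construct the coupling by the classical \emph{quantile transform} and then to control the discrepancy between the distribution function of $S_n/\sqrt n$ and the standard normal distribution function $\Phi$ by means of a Cram\'er-type large deviation expansion; this is exactly the route taken in \cite[Section~6]{kmt75}, and the finite moment generating function hypothesis is precisely what makes the expansion valid up to deviations of order $\varepsilon\sqrt n$. I expect that expansion to be the main obstacle; everything else is bookkeeping.

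\textbf{Setting up the coupling.} Let $F_n$ be the distribution function of $W_n := S_n/\sqrt n$ and, on a sufficiently rich probability space, let $Z$ be standard normal; define $W_n := F_n^{-1}(\Phi(Z))$ with $F_n^{-1}$ the right-continuous generalised inverse, and $S_n := \sqrt n\, W_n$. Then $\Phi(Z)$ is uniform on $[0,1]$, so $W_n$ has law $F_n$ and both marginals are exact. Since $W_n$ depends on $Z$ only through $u := \Phi(Z)$, it suffices to bound $|W_n - Z|$ pointwise: writing $w = F_n^{-1}(u)$ one has $F_n(w-)\le u\le F_n(w)$, so $Z=\Phi^{-1}(u)$ lies between $\Phi^{-1}(F_n(w-))$ and $\Phi^{-1}(F_n(w))$, whence
\[ |W_n - Z| \ \le\ \max\bigl\{\,\bigl|w-\Phi^{-1}(F_n(w))\bigr|,\ \bigl|w-\Phi^{-1}(F_n(w-))\bigr|\,\bigr\}. \]
When the $X_i$ are continuous, $F_n$ is continuous and the two quantities coincide; when the $X_i$ are lattice-valued, the point mass $F_n(w)-F_n(w-)$ is $O(1/\sqrt n)$ by the local limit theorem, and this translates (after division by the normal density at the relevant quantile, which is comparable to $\phi(w)$) into an $O(1/\sqrt n)$ gap between the two quantities, which will be absorbed into $c_2$. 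So it remains to bound $|x-\Phi^{-1}(F_n(x))|$ for $x$ in the support of $W_n$ with $|x|<\varepsilon\sqrt n$.

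\textbf{The Cram\'er estimate.} The heart of the proof is: there exist $\varepsilon>0$ (depending on the width of the strip in which the moment generating function of $X_1$ is finite) and $C>0$ such that for $0\le x\le\varepsilon\sqrt n$,
\[ \bigl|\log(1-F_n(x))-\log(1-\Phi(x))\bigr| \ \le\ C\,\frac{1+x^3}{\sqrt n}, \]
with the symmetric statement for the left tail obtained by applying the bound to $(-X_i)_{i\ge1}$, and the degenerate regime $x=O(1)$ handled instead by the Berry--Esseen bound $\sup_x|F_n(x)-\Phi(x)|\le C/\sqrt n$. The standard argument tilts the law of $X_1$ by $e^{tX_1}$ with $t$ chosen so that the tilted sum has mean $\approx x\sqrt n$, and then combines an Edgeworth/local-limit estimate under the tilted measure with Esseen's smoothing inequality; this is where the finite moment generating function is used and where $\varepsilon$ is determined. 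I would cite \cite[Section~6]{kmt75} (or a textbook treatment of Cram\'er's large deviation theorem) for this step rather than reproduce it. Granting it, one inverts using the Mills-ratio asymptotics $\tfrac{d}{dy}\log(1-\Phi(y))=-\phi(y)/(1-\Phi(y))\asymp-(1+y)$ for $y\ge0$: setting $y=\Phi^{-1}(F_n(x))$, so that $\log(1-\Phi(y))=\log(1-F_n(x))$ differs from $\log(1-\Phi(x))$ by at most $C(1+x^3)/\sqrt n$, the mean value theorem yields
\[ |y-x| \ \le\ \frac{C(1+x^3)}{\sqrt n\,(1+\min\{x,y\})} \ \le\ \frac{C'(1+x^2)}{\sqrt n}. \]

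\textbf{Conclusion.} Combining the two displays with the $O(1/\sqrt n)$ discreteness term gives $|W_n-Z|\le(c_1 W_n^2+c_2)/\sqrt n$ on the event $\{|W_n|<\varepsilon\sqrt n\}$ (note that on this event $Z$ is automatically $O(\sqrt n)$, which is consistent with the quantile construction via the very estimate just proved). Multiplying through by $\sqrt n$ and recalling $S_n=\sqrt n\,W_n$ and $S_n^2/n=W_n^2$, this is exactly $|S_n-\sqrt n\,Z|\le c_1 S_n^2/n+c_2$ on $\{|S_n|<\varepsilon n\}$, with $\varepsilon,c_1,c_2$ depending on the law of $X_1$ only through the Berry--Esseen and Cram\'er inputs, i.e.\ through the moment generating function near the origin.
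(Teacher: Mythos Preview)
The paper does not give a proof of this lemma at all: it is stated with the citation \cite[Lemma~1 and Section~6]{kmt75} and used as a black box in the proof of the subsequent Poisson coupling lemma. Your sketch is exactly the classical quantile-coupling-plus-Cram\'er-expansion argument of \cite{kmt75}, so there is no alternative ``paper proof'' to compare against; you are reconstructing the cited result.

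Two minor points on the sketch itself. In the lattice case your discreteness estimate is stated loosely: to get an $O(1/\sqrt n)$ quantile gap you need the point mass $F_n(w)-F_n(w-)$ to be of order $\phi(w)/\sqrt n$ (which the local limit theorem with Cram\'er correction does give for $|w|\le\varepsilon\sqrt n$), not merely $O(1/\sqrt n)$, since otherwise dividing by $\phi(w)$ does not stay bounded when $w$ is large. In the Mills-ratio inversion, the factor $1+\min\{x,y\}$ in the denominator needs a short a~priori argument (or a sufficiently small choice of $\varepsilon$) to rule out $y<x/2$, so that $\min\{x,y\}$ is comparable to $x$; this is routine. Neither issue is a genuine gap.
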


\begin{proof}[Proof of Lemma \ref{l:ucl}]
Let $(X_i)_{i \ge 1}$ be a sequence of i.i.d.\ random variables distributed as $X_i \stackrel{d}{=} \textrm{Pois}(1) - 1$, and let $\varepsilon, c_1, c_2 > 0$ be the constants appearing in Lemma \ref{l:kmtucl} for this $X_i$.  By adjusting constants it suffices to prove the result for $t$ sufficiently large, and we split the proof into two cases: (i) $4 \le t \le   \varepsilon \lfloor \lambda \rfloor $, and (ii) $t \ge  \varepsilon \lfloor \lambda \rfloor $.

\smallskip
For the case $4 \le t \le   \varepsilon \lfloor \lambda \rfloor $, note that $N \sim \textrm{Pois}(\lambda) \stackrel{d}{=}  \sum_{i = 1}^{\lfloor \lambda \rfloor} X_i + \lfloor \lambda \rfloor + Y$, where $Y$ is an independent random variable distributed as $\textrm{Pois}( \lambda - \lfloor \lambda \rfloor)$. Consider the coupling of $ \sum_{i = 1}^{\lfloor \lambda \rfloor} X_i $ and $Z$ guaranteed by Lemma \ref{l:kmtucl} and define $N =   \sum_{i = 1}^{\lfloor \lambda \rfloor} X_i + \lfloor \lambda \rfloor + Y$. Then since
 \[ N - \lambda - \sqrt{\lambda} Z   = \Big( \sum_{i = 1}^{\lfloor \lambda \rfloor} X_i  -  \sqrt{ \lfloor \lambda \rfloor } Z \Big) + Y + ( \lfloor \lambda \rfloor - \lambda ) + (\sqrt{\lfloor \lambda \rfloor} - \sqrt{\lambda} ) Z , \]
 we have
\begin{align*}
&  \mathbb{P}[ |N - \lambda - \sqrt{\lambda} Z | \ge t ]  \le  \P \Big[  \Big|   \sum_{i = 1}^{\lfloor \lambda \rfloor} X_i  - \sqrt{ \lfloor \lambda \rfloor} Z \Big|  \ge t/4 \Big]  + \P[ Y \ge t/4 ] + \P[ |Z| \ge t /4 ] \\
 & \le \mathbb{P} \Big[  \Big|   \sum_{i = 1}^{\lfloor \lambda \rfloor} X_i  - \sqrt{ \lfloor \lambda \rfloor} Z \Big|  \ge t/4 ,   \sum_{i = 1}^{\lfloor \lambda \rfloor} X_i  < \varepsilon \lfloor \lambda \rfloor  \Big]  +  \P \Big[ \sum_{i = 1}^{\lfloor \lambda \rfloor} X_i \ge \varepsilon  \lfloor \lambda \rfloor    \Big]  \\
 & \qquad \qquad \qquad + \P[ Y \ge t/4 ] + \P[ |Z| \ge t/4 ]  \\
 &    \le \mathbb{P} \Big[  \Big|   \frac{c_1 (  \sum_{i = 1}^{\lfloor \lambda \rfloor} X_i  ) ^2 }{ \lfloor \lambda \rfloor } + c_2   \Big|  \ge t/ 4 \Big]  +  \P \Big[ \sum_{i = 1}^{\lfloor \lambda \rfloor} X_i \ge \varepsilon  \lfloor \lambda \rfloor    \Big]  + \P[ Y \ge t/4 ] + \P[ |Z| \ge t/4 ]  \\
 &    \le 2 \mathbb{P} \Big[   \Big| \textrm{Pois}(\lfloor \lambda \rfloor ) - \lfloor \lambda \rfloor   \Big|  \ge \min\Big\{   \sqrt{ \lfloor \lambda \rfloor / c_1  } \sqrt{t/4 - c_2 } ,  \varepsilon  \lfloor \lambda \rfloor \Big\} \Big]  \! + \P[ \textrm{Pois}(1) \ge t/4 ] + \P[ |Z| \ge t/4 ]  
  \end{align*}
where in the first inequality we used $| \lambda -  \lfloor \lambda \rfloor | \le 1 \le t/4$ and $|\sqrt{\lambda} - \sqrt{ \lfloor \lambda \rfloor } | \le 1$, in the third inequality we used Lemma \ref{l:kmtucl}, and in the last inequality we used that $ \sum_{i = 1}^{\lfloor \lambda \rfloor} X_i \sim \textrm{Pois}(\lfloor \lambda \rfloor ) - \lfloor \lambda \rfloor$ and that $Y$ is stochastically dominated by $\textrm{Pois}(1)$. The concentration bounds in \eqref{e:cb} imply that the above is at most $c_3 e^{-c_4 \min\{t,   \lambda  \}}$ for some $c_3, c_4 > 0$ and $t$ sufficiently large, which gives the result in this case by taking $t$ sufficient large and adjusting constants.

\smallskip
In the case $t \ge  \varepsilon \lfloor \lambda \rfloor $, we instead bound
\[    \P[ |N - \lambda - \sqrt{\lambda} Z | \ge t ]  \le  \P[ |N-\lambda| \ge t / 2 ] + \P[ |Z| \ge  t / (2 \sqrt{\lambda} )  ]  . \]
Again the concentration bounds in \eqref{e:cb} imply that the above is at most $c_5 e^{- c_6 \min\{t , t^2 / \lambda \}}$, which gives the result also in this case after adjusting constants.
\end{proof}

\medskip
\section{Continuity of the critical level}
\label{s:cont}

In this section we prove the quantitative continuity result for sequences of stationary planar random fields stated in Proposition \ref{p:cont} above.

\subsection{Finite-size criterion}
\label{s:fsc}
We first establish a finite-size criterion that applies to a single planar random field. This states roughly that, as soon as crossing events occur with sufficiently small (resp.\ large) probability at a given level, one can deduce that the critical level is not much smaller (resp.\ bigger) than this level. The proof is similar to classical bootstrapping arguments for Bernoulli percolation \cite[Section 5.1]{kes82}, although in our setting the sprinkled decoupling property replaces independence (see also \cite{pr15,mv20} for similar arguments).

\smallskip
Recall that $\textrm{Cross}_\ell[a, b]$ (resp.\ $\textrm{Cross}^{tb}_\ell[a, b]$) denotes the event that $\{f \le \ell\}$ crosses the rectangle $[0, a] \times [0, b]$  from left to right (resp.\ top to bottom).

\begin{proposition}[Finite-size criterion]
\label{p:fsc}
Let $f$ be a stationary planar regular random field that satisfies the polynomial sprinkled decoupling property with constants $(c_h, c_{e,1}, c_{e,2})$. Then there exists $c > 0$ depending only on $c_h$, and $\delta, R_0 > 0$ depending only on $(c_{e,1}, c_{e,2})$, such that, for all $\ell \in \R$ and $R \ge R_0$,
\[ \max\big\{ \mathbb{P}[  \textrm{Cross}_\ell[R, 3R] ] , \mathbb{P}[  \textrm{Cross}^{tb}_\ell[3R, R] ]  \big\} \le \delta \ \  \quad \quad \Longrightarrow \quad  \ell_c(f) \ge \ell -  c R^{-c_h} \ \  \]
and
\[ \min\big\{  \mathbb{P}[  \textrm{Cross}_\ell[3 R, R] ] ,  \mathbb{P}[  \textrm{Cross}^{tb}_\ell[R, 3R] ]  \big\} \ge 1 - \delta \quad \Longrightarrow \quad  \ell_c(f) \le \ell + c R^{-c_h}   . \]
\end{proposition}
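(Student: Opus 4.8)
# Proof Proposal for Proposition \ref{p:fsc} (Finite-size criterion)

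\textbf{Overall strategy.} The plan is to run a renormalisation (bootstrapping) argument in the style of Kesten's proof of exponential decay for subcritical Bernoulli percolation \cite{kes82}, but with the sprinkled decoupling property \eqref{e:sd} substituting for independence. I will treat the first implication in detail; the second follows by the duality built into the \emph{regular} hypothesis, since $\{f \in \textrm{Cross}_\ell[a,b]\}$ and $\{-f \in \textrm{Cross}^{tb}_{-\ell}[a,b]\}$ partition the probability space, so a statement about $\textrm{Cross}_\ell$-probabilities being close to $1$ is equivalent to a statement about $\textrm{Cross}^{tb}_{-\ell}$-probabilities for the field $-f$ being close to $0$, and the argument can be applied verbatim to $-f$ with $\ell$ replaced by $-\ell$.

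\textbf{Step 1: a one-step recursion.} First I would set up a doubling scheme. Write $R_n = 3^n R$ and let $p_n(\ell) = \max\{\mathbb{P}[\textrm{Cross}_\ell[R_n, 3R_n]], \mathbb{P}[\textrm{Cross}^{tb}_\ell[3R_n, R_n]]\}$. The key geometric fact is that a left-right crossing of $[0, R_{n+1}] \times [0, 3 R_{n+1}]$ forces two (suitably placed, disjoint, distance-$\ge R_n$-separated) crossings of translated/rotated copies of $R_n$-scale rectangles — this is the standard ``combination of crossings'' lemma (one can cover the long rectangle by a bounded number $K$ of overlapping $R_n$-rectangles arranged so that a crossing of the big rectangle entails crossings of some pair among them that are far apart). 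Applying the sprinkled decoupling property \eqref{e:sd} at scale $R_n$ with sprinkling $h(R_n) = R_n^{-c_h}$ and error $e(R_n) = c_{e,1} R_n^{-c_{e,2}}$, each such pair of crossings at level $\ell$ is controlled by a product of crossing probabilities at the sprinkled level $\ell + R_n^{-c_h}$, plus the error. This yields a recursion of the schematic form
\[
p_{n+1}(\ell) \;\le\; K\,\big( p_n(\ell + R_n^{-c_h}) \big)^2 \;+\; K\, c_{e,1} R_n^{-c_{e,2}} .
\]

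\textbf{Step 2: iterating the recursion.} Next I would choose $\delta$ small enough (depending only on $K$, hence only on absolute constants, and on $c_{e,1}, c_{e,2}$) and $R_0$ large enough that the recursion is ``self-improving'': if $p_n \le \delta$ at some scale with a sufficiently small error term, then $p_{n+1} \le \delta/2$ (say), and more precisely $p_n \to 0$ at least stretched-exponentially fast. Here one must track the drift in the level: after $n$ steps the level has been sprinkled up to $\ell + \sum_{k=0}^{n-1} R_k^{-c_h} = \ell + \sum_k (3^k R)^{-c_h}$, and since $c_h > 0$ this geometric series converges to $c R^{-c_h}$ for $c = c(c_h) = (1-3^{-c_h})^{-1}$. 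So if the hypothesis $p_0(\ell) \le \delta$ holds at scale $R$, then $p_n(\ell') \to 0$ as $n \to \infty$ for every $\ell' < \ell$ with $\ell' \le \ell - c R^{-c_h}$... more carefully, one fixes the target level $\ell^\ast = \ell - c R^{-c_h}$ wait — I should reverse the bookkeeping: run the recursion \emph{at} a level $\ell^\ast$ with sprinkling added, arranging that the cumulative sprinkling $\sum_k (3^k R)^{-c_h} \le c R^{-c_h}$ keeps the level below $\ell$, so that $p_0 \le \delta$ at level $\ell$ feeds the recursion. The upshot is $\mathbb{P}[\textrm{Cross}_{\ell^\ast}[R_n, 3R_n]] \to 0$ for $\ell^\ast = \ell - cR^{-c_h}$.

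\textbf{Step 3: from decay of crossings to a bound on $\ell_c$.} Finally, a standard Borel–Cantelli / annulus argument converts decay of crossing probabilities at level $\ell^\ast$ into the absence of an unbounded component of $\{f \le \ell^\ast\}$. Using the duality afforded by regularity, $\mathbb{P}[\textrm{Cross}_{\ell^\ast}[R_n, 3R_n]] \to 0$ together with its top-bottom counterpart implies (via the fact that a dual ``blocking'' path exists whenever a primal crossing fails) that $\mathbb{P}[\{f \ge \ell^\ast\}$ contains a circuit in the annulus $B(2R_n)\setminus B(R_n)] \to 1$ along the dyadic scales; summability of $1 - (\text{this probability})$ and Borel–Cantelli give infinitely many such circuits almost surely, which confine any component of $\{f \le \ell^\ast\}$ to a bounded region. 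Hence $\{f \le \ell^\ast\}$ a.s.\ has no unbounded component, so $\ell_c(f) \ge \ell^\ast = \ell - c R^{-c_h}$, as claimed. The second implication of the proposition is the same argument applied to $-f$ and level $-\ell$, using regularity to swap $\textrm{Cross}$ and $\textrm{Cross}^{tb}$.

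\textbf{Main obstacle.} The delicate point is Step 2: the sprinkling increments $R_k^{-c_h}$ must be \emph{summable} — which is exactly where the polynomial (rather than merely vanishing) rate matters, as noted in the remark after the proposition — and simultaneously the errors $c_{e,1}R_k^{-c_{e,2}}$ must be small enough relative to the quadratic contraction $p \mapsto Kp^2$ that the recursion genuinely contracts from the threshold $\delta$ downward. Getting the constants to line up (choosing $\delta$ from $K$, then $R_0$ from $c_{e,1}, c_{e,2}$, and verifying the induction base case holds at scale $R_0$) is the careful bookkeeping; everything else is routine percolation combinatorics. A secondary technical care is ensuring the ``combination of crossings'' geometry at each scale produces pairs of crossing events on rectangles that are \emph{exactly} of the form (translate/rotate of $\textrm{Cross}_\ell[R,3R]$) and separated by distance $\ge R$, so that \eqref{e:sd} applies as stated without modification.
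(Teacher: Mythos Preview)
Your proposal is essentially correct and follows the same approach as the paper: a Kesten-style renormalisation along scales $R_n = 3^n R$, using sprinkled decoupling in place of independence to obtain the quadratic recursion $a_{n+1} \le K a_n^2 + c_{e,1} r_n^{-c_{e,2}}$, with the cumulative sprinkling summing to $cR^{-c_h}$ where $c = (1-3^{-c_h})^{-1}$. The paper uses $K = 49$ via an explicit $7 \times 7$ covering construction, and for Step~3 of the first implication it bounds the one-arm probability $\P[\{f \le \ell'\} \text{ crosses the annulus } [-3r_n/2,3r_n/2]^2 \setminus [-r_n/2,r_n/2]^2]$ directly by four crossing probabilities rather than building dual circuits; your circuit argument via the union bound on complements is an equally valid variant.

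One point needs care: your reduction of the second implication to ``the same argument applied to $-f$'' is too quick as stated. Running the full first implication for $-f$ at level $-\ell$ yields $\ell_c(-f) \ge -\ell - cR^{-c_h}$, i.e.\ that $\{f \ge m\}$ has no unbounded component for $m > \ell + cR^{-c_h}$; this is \emph{not} the desired conclusion $\ell_c(f) \le \ell + cR^{-c_h}$, which asserts that $\{f \le m\}$ \emph{does} have one. (Infinitely many nested circuits in $\{f \le \ell'\}$ need not be connected.) What works---and what the paper does---is to run only the recursion (your Steps~1--2) for $-f$, giving $\P[f \in \textrm{Cross}_{\ell'}[3r_n, r_n]] \ge 1 - c_1 e^{-c_2 n}$ with $\ell' = \ell + cR^{-c_h}$, and then replace Step~3 by a gluing construction: overlapping $3r_n \times r_n$ and $r_n \times 3r_n$ rectangles along increasing scales whose crossings, if they all occur, concatenate into an infinite path in $\{f \le \ell'\}$; Borel--Cantelli on the summable failure probabilities finishes. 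This is a standard percolation move, so the gap is minor, but the conclusion step for the two implications is genuinely asymmetric.
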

\begin{proof}
Fix $\delta > 0$ and $R_0 \ge 1$ such that $49 (\delta + \sqrt{c_{e,1}}   R_0^{-c_{e,2}/2} ) < 1$. By the classical construction of Kesten \cite[Section 5.1]{kes82} (see Figure \ref{f:bootstrap}), for every $R \ge 1$ one can find collections $(A_i)_{i=1,2,\ldots ,7}$ and $(B_j)_{j=1,2,\ldots ,7}$ of translated copies of the events $ \textrm{Cross}_\ell[R, 3R]$ or $ \textrm{Cross}^{tb}_\ell[3R, R]$ such that the following holds for every $\ell \in \R$:
\begin{itemize}
\item $\textrm{Cross}_\ell[3R, 9R]$ implies $\cup_{i,j = 1, \ldots , 7} (A_i\cap B_j)$; and
\item For each $i ,j= 1, \ldots, 7$, the distance between the rectangles associated with $A_i$ and $B_j$ is at least $R$.
\end{itemize}
Then the union bound, sprinkled decoupling, and stationarity imply that, for every $R \ge 1$ and $\ell \in \R$,
\begin{align}
\label{e:sd1}
 \nonumber \P[  \textrm{Cross}_\ell[3R, 9R] ] &  \le 49 \max_{i,j= 1, 2,\ldots, 7}  \P[ f \in  A_i \cap B_j ] \\
 \nonumber & \le  49 \max_{i ,j= 1, 2,\ldots, 7}  \P[ f + h(R) \in A_i] \P[  f + h(R) \in B_j ] + e(R) \\
  & \le 49 \max\{ \P[ \textrm{Cross}_{\ell + R^{-c_h}}[R, 3R] ] , \mathbb{P}[  \textrm{Cross}^{tb}_{\ell + R^{-c_h}}[3R, R] ]  \big\}^2 + c_{e,1} R^{-c_{e,2}} .
\end{align}

\begin{figure}
  \includegraphics[scale=0.55]{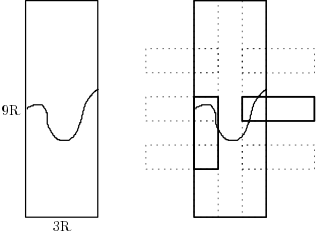}
  \caption{On the left, a realisation of $\cross_\ell[3R, 9R]$. On the right, the $14$ translated copies of $[R, 3R]$ and $[3R, R]$ on which the events $(A_i)_{i=1,\ldots,7}$ and $(B_j)_{j=1,\ldots,7}$ are supported, and in bold a pair $(A_i, B_j)$ such that $A_i \cap B_j$ occurs.}
  \label{f:bootstrap}
\end{figure}

Now suppose $R \ge R_0$ and $\ell \in \R$ are such that
\[\max\big\{ \mathbb{P}[  \textrm{Cross}_\ell[R, 3R] ] , \mathbb{P}[  \textrm{Cross}^{tb}_\ell[3R, R] ]  \big\} \le \delta. \]
For $n \ge 0$, define the sequences $r_n = R 3^n$ and $\ell_n = \ell - \sum_{i=0}^{n-1} r_i^{-c_h}$ with $\ell_0 = \ell$. Abbreviating
\[ a_n =  \max\big\{  \P[  \textrm{Cross}_{\ell_n}[r_n, 3r_n]] ,  \mathbb{P}[   \textrm{Cross}^{tb}_{\ell_n}[3r_n, r_n] ]  \big\}    , \]
 we deduce from \eqref{e:sd1} that, for every $n \ge 0$,
\begin{equation}
\label{e:fs1}
 a_{n+1} \le  49 a_n^2 +  c_{e,1} r_n^{-c_{e,2} }  .
 \end{equation}
Defining $a_n' = 49(a_n + \sqrt{ c_{e,1} }  r_n^{-c_{e,2}/2} ) \ge a_n$, one checks from \eqref{e:fs1} that
\begin{align*}
 a'_{n+1} &= 49(a_{n+1} + \sqrt{ c_{e,1} } r_n^{-c_{e,2}/2} )   \le 49( 49 a_n^2 + c_{e,1} r_n^{-c_{e,2}}  + \sqrt{ c_{e,1} } r_n^{-c_{e,2}/2}) \\
 & \le 49^2 ( a_n + \sqrt{ c_{e,1} }   r_n^{-c_{e,2}/2} )^2  = (a_n')^2 .
 \end{align*}
 Since also
 \[ a_0' = 49(a_0 + \sqrt{ c_{e,1} }  R^{-c_{e,2}/2} )  \le 49(\delta + \sqrt{ c_{e,1} }  R_0^{-c_{e,2}/2} ) < 1 \]
 by the choice of $\delta$ and $R_0$, we deduce that $a_n \le a_n'$ decays exponentially in $n$. Defining $\ell' =  \ell - c R^{-c_h}$, where $c = \sum_{i=0}^\infty 3^{-c_h i} \in (0, \infty)$, so that $\ell_n \downarrow \ell'$, by monotonicity we conclude that
  \begin{equation}
 \label{e:expdecay}
      \max\big\{  \P[  \textrm{Cross}_{\ell'}[r_n, 3r_n]] ,  \mathbb{P}[   \textrm{Cross}^{tb}_{\ell'}[3r_n, r_n] ]  \big\} \le c_1 e^{-c_2 n }   
      \end{equation}
 for all $n \ge 0$ and some $c_1, c_2 > 0$.

 \smallskip 
 To finish the proof of the first statement, define the event $A_n$ that $\{f \le \ell'\}$ contains a path that intersects both $[-r_n/2, r_n/2]^2$ and $\partial [-3r_n/2, 3r_n/2]^2$. By the union bound and stationarity
 \[  \P[A_n] \le   4  \max\big\{  \P[  \textrm{Cross}_{\ell'}[r_n, 3r_n]] ,  \mathbb{P}[   \textrm{Cross}^{tb}_{\ell'}[3r_n, r_n] ]  \big\} \to 0 \]
 as $n \to \infty$. Since $\ell' > \ell_c(f)$ would imply that $\liminf_{n \to \infty} \P[A_n] > 0$, we deduce that $\ell_c(f) \ge \ell' = \ell - c R^{-c_h}$ as required.

\smallskip
For the second statement, by replacing $f \mapsto -f$ and using the regularity of the field, the same reasoning that led to \eqref{e:expdecay} shows that if $R \ge R_0$ and $\ell \in \R$ are such that
 \[ \min\big\{  \mathbb{P}[  \textrm{Cross}_\ell[3 R, R] ] ,  \mathbb{P}[  \textrm{Cross}^{tb}_\ell[R, 3R] ]  \big\} \ge 1 - \delta \]
 then, defining $r_n = R 3^n$ and $\ell'  =  \ell + c R^{-c_h}$, 
\[ \min \big\{  \mathbb{P}[ f \in \textrm{Cross}_{\ell'}[3r_n, r_n] ] , \mathbb{P}[   f \in \textrm{Cross}^{tb}_{\ell'}[r_n, 3r_n] ]  \big\} \ge 1 - c_1 e^{-c_2 n}   \]
 for all $n \ge 0$. By a classical construction (see Figure \ref{f:glue}) one can define events $(A_n)_{n \ge 0}$ satisfying:
 \begin{itemize}
\item If $n$ is even (resp.\ odd) then $A_n$ is a translation of $\cross_{\ell'}[3 r_n, r_n]$ (resp.\ $\cross^{tb}_{\ell'}[r_n, 3r_n]$);
\item For any $n_0 \ge 0$, $\cap_{n \ge n_0}A_n$ implies the existence of an infinite path in $\{f \le \ell' \}$.
\end{itemize}
Then by the Borel-Cantelli lemma $\{f \le  \ell'\}$ contains an infinite component almost surely, and thus $\ell_c(f) \le \ell' = \ell + cR^{-c_h}$ as required.

\begin{figure}
  \includegraphics[scale=0.3]{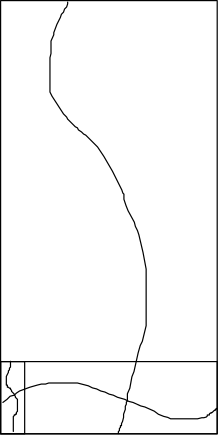}
  \caption{A depiction of the events $A_n$ for $n=1,2,3$.}
  \label{f:glue}
\end{figure}

\end{proof}

\subsection{Proof of Proposition \ref{p:cont}}
Let $s(n), u(n), R(n)$ and $w(R)$ be as in the statement of the proposition, and let $R_0, \delta > 0$ be the constants defined by Proposition \ref{p:fsc} applied to the fields $(s(n) f_n)_{n \ge 1}$; by the uniform sprinkled decoupling property these can be chosen uniformly over $n$. By \eqref{e:c}, \eqref{e:un}, and the definition of the near-critical window, we have
 \begin{align*}
 & \min\{ \mathbb{P}[ f_n \in \textrm{Cross}_{w( R(n) ) + u(n)}[3 R(n), R(n)] ]  , \mathbb{P}[ f_n \in \textrm{Cross}^{tb}_{w( R(n) ) + u(n)}[R(n), 3R(n)] ]  \} \\
 &   \qquad \ge  \min\{ \mathbb{P}[ f \in \textrm{Cross}_{w( R(n) ) }[3 R(n), R(n)] ]  , \mathbb{P}[ f \in \textrm{Cross}^{tb}_{w( R(n) )  }[R(n), 3R(n)] ]  \}  \\
 & \qquad \qquad \qquad -   \P[ \| f_n - f\|_{\infty, B(2R(n)) } \ge u(n)]  \\
 & \qquad \to 1 
 \end{align*}
 as $n \to \infty$. Via a rescaling, this implies that
  \begin{align*}
 &    \min\{ \mathbb{P}[s(n) f_n \in \textrm{Cross}_{s(n)(w( R(n) ) + u(n))}[3 R(n), R(n)] ]  , \mathbb{P}[ s(n) f_n \in \textrm{Cross}^{tb}_{s(n)(w( R(n) ) + u(n))}[R(n), 3R(n)] ]  \} \\
 & \qquad \ge  1 - \delta   
 \end{align*}
for sufficiently large $n$. Hence taking $n$ sufficiently large so that also $ R(n) \ge R_0$, by the finite-size criterion in Proposition~\ref{p:fsc} applied to $s(n) f_n$ we deduce that
 \[ \ell_c( s(n) f_n) \le  s(n) ( w( R(n) ) + u(n) )  + c R(n)^{-c_h}    \]
 where $c > 0$ depends only on $c_h$. Via rescaling, this implies that
  \[ \ell_c(  f_n) \le    w( R(n) ) + u(n)   + c s(n)^{-1} R(n)^{-c_h}   \]
  for sufficiently large $n$. An analogous argument for the events 
  \[ \textrm{Cross}_{ - w( R(n) ) - u(n)  }[ R(n), 3R(n)  ]  \quad \text{and} \quad \textrm{Cross}^{tb}_{ - w( R(n) ) - u(n)  }[ 3R(n), R(n)  ] \] 
  shows that
  \[ \ell_c(f_n) \ge   - w( R(n) ) - u(n)  - c s(n)^{-1} R(n)^{-c_h}  \]
   for sufficiently large $n$, which completes the proof.
\bigskip

\bibliographystyle{halpha-abbrv}
\bibliography{sn}

\end{document}